\newtheorem{lem}{Lemma}
\newtheorem{prop}{Proposition}[section]
\newtheorem{thm}{Theorem}[section]
\newtheorem*{thmnol}{Theorem}
\newtheorem{cor}{Corollary}
\newtheorem{rem}{Remark}
\theoremstyle{remark}
\title[Growth of torsion]{Growth of Odd Torsion Over Imaginary Quadratic Fields of Class Number 1 }
\author{Irmak Bal\c{c}{\i}k} 
\address{Department of Mathematics, University of Southern California}
\email{balcik@usc.edu}
\begin{document}

\maketitle

\begin{abstract}
	Let $K$ be a non-cylotomic imaginary quadratic field of class number 1 and $E/K$ is an elliptic curve with $E(K)[2]\simeq \mathbb{Z}_1.$ 
    We determine the odd-order torsion groups that can arise as $E(L)_{\text{tor}}$ where $L$ is a quadratic extension of $K.$
\end{abstract}

\date{\today}

\section{Introduction}
The possible torsion groups of an elliptic curve over the rational numbers are known by Mazur's theorem \cite{Maz77}. These groups are
\begin{equation} \label{eq:mazurgroups}
\begin{array}{ll}
\mathbb{Z}_n, & 1\leq n \leq 12,\  n\neq 11 \\
\mathbb{Z}_2\oplus \mathbb{Z}_{2n}, & 1\leq n\leq 4
\end{array}
\end{equation}

The work of Mazur was generalized by Kenku, Momose and Kamienny to quadratic fields. 

\begin{thmnol}[Kamienny \cite{Kam92}, Kenku and Momose \cite{KM88}] \label{thm2}
Let K be a quadratic number field and $E/K$ an elliptic curve. Then $E(K)_{\text{tor}}$ is isomorphic to one of the followings:
\begin{equation} \label{eq:kamiennygroups}
\begin{array}{ll}
 \mathbb{Z}_n, &  1\leq n\leq 18,\ n\neq 17  \\  \mathbb{Z}_2 \oplus \mathbb{Z}_{2n}, & 1\leq n\leq 6  \\ 
\mathbb{Z}_3 \oplus \mathbb{Z}_{3n}, & 1\leq n\leq 2 \\ 
\mathbb{Z}_4 \oplus \mathbb{Z}_4 & . 
\end{array}
\end{equation}
\end{thmnol}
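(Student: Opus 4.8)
\medskip
\noindent\emph{Sketch of the strategy.}
The plan is to translate the classification into a statement about quadratic points on modular curves and then to run Mazur's method in the refined form due to Kamienny. For an integer $N\geq 4$ let $X_1(N)/\mathbb{Q}$ be the modular curve whose non-cuspidal $F$-points classify pairs $(E,P)$ with $E/F$ an elliptic curve and $P\in E(F)$ of exact order $N$, and for $1\leq m\mid n$ let $X_1(m,n)$ be the analogous curve for a subgroup isomorphic to $\mathbb{Z}/m\oplus\mathbb{Z}/n$. An elliptic curve over a quadratic field $K$ whose torsion contains one of the prescribed groups produces a non-cuspidal point of $X_1(N)(K)$ (resp.\ $X_1(m,n)(K)$), i.e.\ a \emph{quadratic point}; conversely, away from the finitely many cusps and the CM values $j=0,1728$, such a point returns an elliptic curve with that torsion over a quadratic field. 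So the theorem amounts to: (i) each group on the list is realized over some quadratic field, verified by exhibiting explicit elliptic curves (automatic when the relevant modular curve has genus $\leq 2$, see below); and (ii) for every $N$ and product group not on the list, the corresponding modular curve has no non-cuspidal quadratic point.

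Toward (ii) one first records that the genus of $X_1(N)$ is $0$ for $N\leq 10$ and $N=12$, is $1$ for $N=11,14,15$, is $2$ for $N=13,16,18$, and is $\geq 3$ for $N=17$ and for all $N\geq 19$. In each genus-$\leq 2$ case $X_1(N)$ is rational, elliptic, or hyperelliptic over $\mathbb{Q}$, so, carrying a rational cusp, it has infinitely many quadratic points, almost all non-cuspidal; this already accounts in part (i) for $\mathbb{Z}/N$ with $N\in\{1,\dots,16\}\cup\{18\}$, which is precisely the cyclic part of the list. For the remaining curves --- $X_1(17)$, $X_1(N)$ with $N\geq 19$, and the modular curves attached to the excluded product groups $\mathbb{Z}/2\oplus\mathbb{Z}/14$, $\mathbb{Z}/3\oplus\mathbb{Z}/9$, $\mathbb{Z}/4\oplus\mathbb{Z}/8$ --- one checks, by a gonality and quotient analysis, that there is no degree-$2$ $\mathbb{Q}$-rational map to $\mathbb{P}^1$ or to a positive-rank elliptic curve, so that the curve has only finitely many quadratic points. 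The real content is then the \emph{effective} statement that none of these finitely many points is non-cuspidal.

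The engine for that is Kamienny's criterion \cite{Kam92}, which extends Mazur's argument and completes the earlier work of Kenku and Momose \cite{KM88}. Take a prime $p>13$. Set $J=J_1(p)$ and let $J'$ be a quotient of $J$ with $J'(\mathbb{Q})$ finite --- the Eisenstein quotient or, following Merel, the winding quotient, whose rank vanishing rests on Mazur, Kolyvagin--Logach\"ev, and Kato. Let $\varphi\colon X_1(p)^{(2)}\to J'$ send an effective degree-$2$ divisor $D$ to the class of $D-2(\infty)$. If $x\in X_1(p)(K)$ with $K/\mathbb{Q}$ quadratic and $\sigma$ its nontrivial automorphism, then $D=x+x^{\sigma}$ is a $\mathbb{Q}$-point of $X_1(p)^{(2)}$, so $\varphi(D)$ lies in the finite group $J'(\mathbb{Q})_{\mathrm{tors}}$, all of whose elements are cuspidal by Mazur's determination of the cuspidal subgroup. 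Choosing an auxiliary prime $\ell\neq p$, one shows $\varphi$ is a \emph{formal immersion} at the pertinent cuspidal point of $X_1(p)^{(2)}$ in characteristic $\ell$; this reduces to verifying that a $2\times 2$ matrix assembled from the first two $q$-expansion coefficients at $\infty$ of $f$ and $T_\ell f$, for $f$ ranging over an eigenbasis of the relevant cusp-form space, is invertible modulo $\ell$. Formal immersion, together with the fact that $x$ and $x^{\sigma}$ cannot both reduce mod $\ell$ to a point where $X_1(p)$ is non-smooth over $\mathbb{Z}_\ell$, forces $D$ to equal the cuspidal divisor, so $x$ is a cusp --- a contradiction. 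The prime-power and composite levels (the $2$-power and $3$-power towers, the excluded product groups, and $\mathbb{Z}/N$ for composite $N\geq 20$) are handled by the same machine on the appropriate $X_1(m,n)$, since an extra point of order $p^2$ or extra $2$- or $3$-torsion merely lifts the problem to a modular curve of higher level whose relevant Jacobian quotient is still of rank zero.

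The main obstacle is exactly this formal-immersion verification, which must be done level by level: the choice of $\ell$ and the control of the Hecke matrix is where \cite{Kam92} went beyond \cite{KM88}, and where essentially all the genuine computation lives. A secondary difficulty is the borderline cases of small genus --- notably $N=17$ and the product groups $\mathbb{Z}/3\oplus\mathbb{Z}/9$, $\mathbb{Z}/4\oplus\mathbb{Z}/8$ --- where the modular curve is small enough that the bare symmetric-square argument can fail and one must instead analyze the reduction of the hypothetical quadratic point by hand, sometimes after an Atkin--Lehner or isogeny descent. Finally, one has to track carefully throughout which quadratic points are genuinely cuspidal or arise from the CM curves with $j\in\{0,1728\}$, so as never to exclude a group that in fact occurs.
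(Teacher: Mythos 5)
The paper does not prove this statement: it is quoted as background and attributed to Kenku--Momose and Kamienny, so there is no internal proof to compare against. Your sketch correctly identifies the architecture of the proof that actually exists in the literature --- reduction to quadratic points on $X_1(N)$ and $X_1(m,n)$, the genus/gonality dichotomy separating the infinitely-realized groups from the excluded ones, and Kamienny's formal-immersion criterion applied to $X_1(p)^{(2)}\to J'$ for a rank-zero quotient $J'$ --- so as a roadmap it points in the right direction.

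There are, however, concrete gaps. First, the step that places the hypothetical quadratic point in the residue disc of the cusp is misstated: the issue is not that ``$x$ and $x^{\sigma}$ cannot both reduce mod $\ell$ to a point where $X_1(p)$ is non-smooth,'' but that for a prime $\lambda\mid\ell$ of $K$ the Hasse--Weil bound $\#\tilde{E}(\mathbb{F}_{\lambda})\le(\ell+1)^2<p$ forbids good reduction carrying a point of exact order $p$, and the analysis of the N\'eron model at a prime of bad reduction then forces $(E,P)$ to specialize to a cusp; without this input the formal-immersion conclusion $D=2(\infty)$ has nothing to bite on (and extra care is needed at $\ell=2$, the prime Kamienny actually uses, where injectivity of torsion under reduction is not automatic). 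Second, your inventory of product groups to exclude is incomplete: besides $\mathbb{Z}_2\oplus\mathbb{Z}_{14}$, $\mathbb{Z}_3\oplus\mathbb{Z}_9$ and $\mathbb{Z}_4\oplus\mathbb{Z}_8$ one must also rule out, for instance, $\mathbb{Z}_2\oplus\mathbb{Z}_{16}$, $\mathbb{Z}_2\oplus\mathbb{Z}_{18}$, $\mathbb{Z}_3\oplus\mathbb{Z}_{12}$, $\mathbb{Z}_4\oplus\mathbb{Z}_{12}$, $\mathbb{Z}_5\oplus\mathbb{Z}_5$ and $\mathbb{Z}_6\oplus\mathbb{Z}_6$, none of which is subsumed by the three cases you name; the standard first move here --- the Weil-pairing constraint $\mu_m\subset K$, which kills $m\ge 5$ outright and confines $m=3,4$ to $\mathbb{Q}(\sqrt{-3})$ and $\mathbb{Q}(i)$ --- never appears in your argument, even though it is exactly the mechanism the present paper uses later (Lemma \ref{lem5}, Proposition \ref{prop4}). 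Finally, what you yourself call ``the real content'' --- the level-by-level invertibility of the Hecke matrices establishing the formal immersions, and the exhibition of curves realizing each listed group --- is deferred entirely, so what you have is an accurate survey of the strategy of \cite{Kam92} and \cite{KM88} rather than a proof.
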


Over cubic fields, the exact analog of Mazur's theorem has been established last year \cite{DEVMB21}.  Although it is not known which groups can appear as torsion groups of elliptic curves over quartic number fields, Jeon, Kim and Park \cite{JKP06} determined which groups can appear for infinitely many non-isomorphic curves.
It was also shown by Derickx, Kamienny, Stein and Stoll \cite{DKSS17} that $17$ is the largest prime dividing $|E(K)_{\text{tor}}|$ over any quartic field $K.$
This work is a first step towards classifying the growth of torsion in quadratic extensions of base fields larger than $\mathbb{Q}$. Specifically we are interested in the following question: if $K$ is a quadratic field and $E/K$ is an elliptic curve with $|E(K)_{\text{tor}}|$  odd, how does the torsion subgroup of $E(K)$ grow upon quadratic extension? We answer this question completely for the set of number fields
$ \mathcal{S}=\{ \mathbb{Q}(\sqrt{-2}), \mathbb{Q}(\sqrt{-7}), \mathbb{Q}(\sqrt{-11}),\mathbb{Q}(\sqrt{-19}),\mathbb{Q}(\sqrt{-43}),\mathbb{Q}(\sqrt{-67}), \mathbb{Q}(\sqrt{-163})\}$
consisting of all non-cyclotomic imaginary quadratic fields of class number one. 


\begin{thm}\label{mainthm}
Let $K \in \mathcal{S}$  and $E/K$ an elliptic curve with $E(K)[2]\simeq \mathbb{Z}_1,$ Then, for any extension $L$ with $[L:K]=2,$ $E(L)_{\text{tor}}$ is isomorphic to one of the following groups:
	$$ \mathbb{Z}_n, \  n \in \{ 1,3,5,7,9,11,15\}, \  \mathbb{Z}_3 \oplus \mathbb{Z}_3, \ \mathbb{Z}_3 \oplus \mathbb{Z}_9.$$
Furthermore, $\mathbb{Z}_3 \oplus \mathbb{Z}_9$ is only realized in a quadratic extension of $K=\mathbb{Q}(\sqrt{-2})$ or $\mathbb{Q}(\sqrt{-11}).$
	
\end{thm}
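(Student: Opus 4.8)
The plan is to reduce the problem from the quadratic extension $L$ down to the known base field $K$ by the quadratic-twist splitting, then cut the list of candidates down with the Kamienny--Kenku--Momose classification and Weil-pairing constraints, and finally eliminate the remaining spurious groups (and locate the ground fields for $\mathbb{Z}_3\oplus\mathbb{Z}_9$) by a case analysis on modular curves. For the reduction: since $E(K)[2]$ is trivial, the $2$-division polynomial of $E$ is a cubic over $K$ with no $K$-root, hence irreducible over $K$, so each of its roots generates a cubic extension of $K$ and none can lie in the quadratic extension $L$; thus $E(L)[2]=0$ and $E(L)_{\mathrm{tor}}$ has odd order. Writing $L=K(\sqrt d)$ and letting $E^{d}$ be the quadratic twist, $\mathrm{Gal}(L/K)=\langle\sigma\rangle$ acts on the finite odd-order group $E(L)_{\mathrm{tor}}$, which therefore decomposes into its $(\pm1)$-eigenspaces; the $(+1)$-part is $E(K)_{\mathrm{tor}}$, and an isomorphism $\psi\colon E\to E^{d}$ over $L$ with $\psi^{\sigma}=-\psi$ identifies the $(-1)$-part with $E^{d}(K)_{\mathrm{tor}}$, so that
$$E(L)_{\mathrm{tor}}\ \cong\ E(K)_{\mathrm{tor}}\oplus E^{d}(K)_{\mathrm{tor}}.$$

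Next I would extract the short list. Both summands are torsion subgroups of elliptic curves over the quadratic field $K$, hence lie in \eqref{eq:kamiennygroups}; both are odd, since twisting leaves $E[2]$ unchanged as a Galois module, so $E^{d}(K)[2]=E(K)[2]=0$; and since $K\in\mathcal S$ is non-cyclotomic we have $\zeta_3\notin K$, so the Weil pairing forces $E(K)[3]$ and $E^{d}(K)[3]$ to be cyclic. Hence $E(K)_{\mathrm{tor}}=\mathbb{Z}_{n_1}$ and $E^{d}(K)_{\mathrm{tor}}=\mathbb{Z}_{n_2}$ with $n_1,n_2\in\{1,3,5,7,9,11,13,15\}$. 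If a prime $p$ divides $\gcd(n_1,n_2)$ then $E[p]\subseteq E(L)$, whence $\mu_p\subseteq L$ and $\varphi(p)\mid[L:\mathbb{Q}]=4$; the value $p=5$ is excluded because the quadratic subfield of $\mathbb{Q}(\zeta_5)$ is real while $K$ is imaginary, so $\gcd(n_1,n_2)\in\{1,3\}$, with $3\mid\gcd(n_1,n_2)$ forcing $L=K(\sqrt{-3})$, and $\varphi(9)=6\nmid4$ rules out $E[9]\subseteq E(L)$. Consequently $E(L)_{\mathrm{tor}}$ is either $\mathbb{Z}_{\mathrm{lcm}(n_1,n_2)}$ with $\gcd(n_1,n_2)=1$, or one of $\mathbb{Z}_3\oplus\mathbb{Z}_3$, $\mathbb{Z}_3\oplus\mathbb{Z}_9$, $\mathbb{Z}_3\oplus\mathbb{Z}_{15}$, $\mathbb{Z}_3\oplus\mathbb{Z}_{45}$.

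It then remains to discard the groups not on the list, which is the main obstacle. A cyclic $\mathbb{Z}_N$ with $N\notin\{1,3,5,7,9,11,15\}$ occurring in the previous step produces a point of order $N$ on $E/L$, hence a non-cuspidal point of $X_1(N)(L)$ over the quartic field $L$; for the offending values $N\in\{13,21,33,35,39,45,\dots\}$ the curve $X_1(N)$ has genus $\ge2$, and I would rule these out using the known bounds on primes and prime powers dividing torsion over quartic fields together with an explicit study of the finitely many relevant modular curves --- in particular $X_1(13)$ (genus $2$), for which one determines $X_1(13)(K)$ for each of the seven fields of $\mathcal S$ by a Mordell--Weil sieve or Chabauty computation and checks the points are cuspidal. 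The groups $\mathbb{Z}_3\oplus\mathbb{Z}_{15}$ and $\mathbb{Z}_3\oplus\mathbb{Z}_{45}$ require $L=K(\sqrt{-3})$ with $E^{d}(K)_{\mathrm{tor}}\supseteq\mathbb{Z}_{15}$ and the twist of $E^{d}$ by $-3$ acquiring a $3$-torsion point over $K$; since $X_1(15)$ has genus $1$, $X_1(15)(K)$ is finite and one checks the finitely many candidate twists directly. For the final assertion, $\mathbb{Z}_3\oplus\mathbb{Z}_9$ forces $L=K(\sqrt{-3})$, $E(K)_{\mathrm{tor}}=\mathbb{Z}_9$ and $E^{-3}(K)_{\mathrm{tor}}=\mathbb{Z}_3$, so $E$ lies in the genus-$0$ family $X_1(9)$ and $\overline{\rho}_{E,3}$ is split over $K$ with characters $1$ and $\chi_{-3}$; this condition defines a positive-genus cover $C\to X_1(9)$, and computing $C(K)$ for $K\in\mathcal S$ should leave exactly $K=\mathbb{Q}(\sqrt{-2})$ and $K=\mathbb{Q}(\sqrt{-11})$. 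Realizability of every group on the list would finally be established by explicit curves and one-parameter families (from the genus-$0$ curves $X_1(n)$, $n\le 9$, over $\mathbb{Q}$ and their twists, and from explicit curves over the relevant fields of $\mathcal S$ for $\mathbb{Z}_{11}$, $\mathbb{Z}_{15}$, $\mathbb{Z}_3\oplus\mathbb{Z}_3$ and $\mathbb{Z}_3\oplus\mathbb{Z}_9$). The first two paragraphs are essentially formal; the real work is the point counts on $X_1(13)$, on the small composite-level curves $X_1(21),X_1(33),\dots$, on $X_1(15)$, and on the cover of $X_1(9)$, over quartic fields that are quadratic extensions of the seven class-number-one imaginary quadratic fields.
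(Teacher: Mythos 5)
Your first two paragraphs reproduce the paper's reduction faithfully: triviality of $E(L)[2]$ from irreducibility of the $2$-division cubic (Remark \ref{rem1}), the decomposition $E(L)_{\text{tor}}\simeq E(K)_{\text{tor}}\oplus E^d(K)_{\text{tor}}$ (Lemma \ref{lem1}), and the Weil-pairing argument excluding full $p$-torsion for $p>3$ (Proposition \ref{prop4}) are all exactly the paper's steps. The one difference there is that the paper invokes the Sarma--Saikia classification over each $K\in\mathcal S$ (Proposition \ref{mainlist}) to discard $13$ and $17$ immediately, whereas you only use the Kamienny--Kenku--Momose list and so must still dispose of $n_i=13$ by hand; that is a recoverable omission.

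The genuine gap is in your third paragraph, where the real work lives. First, your proposed elimination of the composite orders $N\in\{21,33,35,39,45,\dots\}$ via ``known bounds on primes and prime powers dividing torsion over quartic fields'' fails outright: those bounds (e.g.\ that $17$ is the largest prime over quartic fields) say nothing about composite orders with small prime factors, and indeed $\mathbb{Z}_{21}$ and several of the other offending groups \emph{do} occur as torsion over quartic fields, even infinitely often. Second, the fallback you offer --- determining the quartic points of $X_1(21),X_1(33),X_1(35),X_1(45),\dots$ lying in quadratic extensions of the seven fields --- is posed at the wrong level and is not feasible as stated ($X_1(21)$ already has genus $5$, and you give no mechanism for exploiting the constraint that $L/K$ is quadratic with $K\in\mathcal S$). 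The missing idea is the descent device of Proposition \ref{newman}: a point of order $n$ on $E^d(K)$ transfers to a $\mathrm{Gal}(\overline K/K)$-stable cyclic subgroup of $E(\overline K)$, so the pair $(E(K)_{\text{tor}},E^d(K)_{\text{tor}})$ produces a \emph{$K$-rational} cyclic $N$-isogeny, i.e.\ a quadratic point on $X_0(N)$ rather than a quartic point on $X_1(N)$. The entire content of the paper's Section~3 (Theorem \ref{thmc}) is the determination of quadratic points on $X_0(N)$ for $N=21,27,33,35,45,55,77$ --- by symmetric-square/Jacobian arguments, division polynomials, and, for the genus-$7$ curve $X_0(77)$, relative symmetric Chabauty plus a Mordell--Weil sieve. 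None of this is replaced by anything in your proposal, so the elimination step, and with it the final statement about where $\mathbb{Z}_3\oplus\mathbb{Z}_9$ occurs, remains unproved.
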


\begin{table}[h!]
	\centering
	\caption{Growth of Odd Torsion of Ellliptic Curves Defined over $K \in \mathcal{S}$ }
	\begin{tabular}{ | p{2cm}|p{13.5cm}| }
		\hline
		$E(K)_{\text{tor}}$ & $E(L)_{\text{tor}}$ in some quadratic extension $L$ of $K$ (depending on $E$ and $K$) \\
		\hline
		$\mathbb{Z}_1$ & $\mathbb{Z}_1, \mathbb{Z}_3, \mathbb{Z}_5, \mathbb{Z}_7, \mathbb{Z}_9, \mathbb{Z}_{11}, \mathbb{Z}_{15}$  \\
		\hline
		$\mathbb{Z}_3$ & $\mathbb{Z}_3, \mathbb{Z}_{15}, \mathbb{Z}_3 \oplus \mathbb{Z}_3, \mathbb{Z}_3 \oplus \mathbb{Z}_9$  \\
		\hline
		$\mathbb{Z}_5 $ & $\mathbb{Z}_5, \mathbb{Z}_{15}$  \\
		\hline
		$\mathbb{Z}_7$ & $\mathbb{Z}_7$  \\
		\hline
		$\mathbb{Z}_9$ & $\mathbb{Z}_9, \mathbb{Z}_3 \oplus \mathbb{Z}_9$  \\
		\hline
		$\mathbb{Z}_{11} $ & $\mathbb{Z}_{11}$ \\
		\hline
		$\mathbb{Z}_{15}$ & $\mathbb{Z}_{15}$ \\
		\hline
	\end{tabular}
	\label{table:1}
\end{table}

Theorem \ref{mainthm} completes earlier work of Newman \cite{New16} who determined the growth of odd-torsion over the cyclotomic imaginary quadratic fields of class number one, that is $K = \mathbb{Q}(i)$ and $K = \mathbb{Q}(\sqrt{-3})$. His work is based on Najman's paper \cite{Naj11} in which the set of torsion groups arising over these particular fields $\mathbb{Q}(i)$ and $\mathbb{Q}(\sqrt{-3})$ are determined. The primary groups included in our list but do not arise in Newman's list are $\mathbb{Z}_{11}$ and $\mathbb{Z}_3 \oplus \mathbb{Z}_9.$ 

We choose to work over the imaginary quadratic fields of class number one for two main reasons (i) these are the only number fields besides $\mathbb{Q}$ for which a classification of torsion groups is known (see Proposition \ref{mainlist} below) (ii) the ring of integers of a class number one field is a principal ideal domain and its unit group is finite. These properties are rather useful for arithmetic computations. 

We now describe our strategy in more detail. Given $E$ an elliptic curve over a number field $K$ if the odd-torsion of $E(K)_{\text{tor}}$ grows over a quadratic extension $L$ of $K$ then there exists a $K$-rational point $P$ of odd-order arising on a quadratic twist $E^d$ of $E$. To start, we can assume $P$ has a prime order greater than 2. There are two restrictions on the existence of such a point $P.$ First, the order of $P$ must be one of the primes occuring as torsion over $K.$ Second, letting $H=\langle P \rangle $ we know that the subgroup $J=H\oplus E(K)_{\text{tor}}$ of $E(L)$ forms a cyclic $K$-rational $N$-isogeny where $N=|J|.$ (see Prop \ref{newman} below) 

Understanding of $K$-rational cyclic $N$-isogenies is equivalent to finding $K$-rational points on the modular curves $X_0(N)$. 
Indeed, the non-cuspidal $K$-rational points on $X_0(N)$ classify equivalence classes of pairs $(E,C)$ where $E/K$ is an elliptic curve and $C$ is a cyclic subgroup of order $N$ in $E(\overline{K})$ such that $C$ is invariant under the action of Gal$(\overline{K}/K).$ 
Two pairs $(E,C)$, $(E',C')$ are equivalent iff there exists an isomorphism $\phi : E \rightarrow E'$ such that $\phi(C)=C'.$ Thus, for $K$ a quadratic number field and $E$ an elliptic curve defined over $K$, the determination of the possible group structures for $E(L)_{\text{tor}}$ involves examining all quadratic points on certain modular curves. A major difficulty arises when the model of $X_0(N)$ is of high genus. This is the case for $N=77$ in which $X_0(77)$ has genus 7 and is the hardest case that we address. Results of Bars \cite{Bar99} and Harris-Silverman \cite{HS91} assert that if $X_0(N)$ is of genus greater of equal to 2, then it has finitely many quadratic points, except 28 values of $N$ which are \
$22$,$23$,$26$,$28$,$29$,$30$,$31$,$33$,$35$,$37$,$39$,$40$,$41$,$43$,$46$,$47$,$48$,$50$,$53$,$59$,$61$,$65$,$71$,$79$,$83$,$89$,$101$,and $131.$ In particular,
the set of quadratic points on $X_0(77)$ is finite. However, finding all the points is challenging and requires an application of relative symmetric Chabauty along with the Mordell-Weil sieve. This approach is inspired by earlier work of \cite{Box20} in which the highest genus handled is $5.$ We will use his method.

In the cases $N=45,55$ the curves $X_0(N)$ are non-hyperelliptic curves of genus $3,5$ respectively.
The quadratic points on these curves are determined by Siksek and Ozman \cite{OS18}. We compute whether each point lying on the curves gives rise to a cyclic $N$-isogeny in a quadratic extension of $K.$

In the cases $N = 33, 35$ the curves $X_{0}(N)$ are hyperelliptic of genus 3 with Mordell-Weil rank rk$(J_0(N))=0$ where $J_0(N)$ denotes the Jacobian of $X_0(N).$ As the set of $K$-rational points on $X_0(N)$ does not form a group structure, we instead study rational points on the symmetric square $X_0(N)^{(2)}$ of $X_0(N)$, which classifies effective divisors of degree 2 on $X_0(N).$ By the existence of degree 2 map from $X_0(N) \rightarrow \mathbb{P}^{1}$ over $\mathbb{Q},$ the set of quadratic points are classified into two subsets: the ones which arise as a pre-image of rational point on $\mathbb{P}^1$, called $non$-$exceptional$ and the ones which do not arise in this way, called $exceptional.$ 
Since $J_0(N)(\mathbb{Q})\ \textless\ \infty,$ the exceptional points are finite. Every non-zero $\mathbb{Q}$-point of $J_0(N)$ has a unique Mumford representation, see  Theorem 4.145 of \cite{CF06}. Magma lists points of $J_0(N)(\mathbb{Q})$ in Mumford representation. From these Mumford representations, we can extract (exceptional) points on $X_0(N).$ In order to find the non-exceptional points in the case $N=35,$ we additionally benefit from the  classification of points on quadratic twists of $X_0(N)$ determined in \cite{Ozman}.

In the cases $N = 21,27,$ $X_0(N)$ is an elliptic curve with  positive rank over some $K \in \mathcal{S}$ which makes the enumeration of all quadratic points computationally infeasible. We then shift our perspective and resolve these cases using the division polynomial method. 

\subsection*{Layout of the paper}
In Section 2, we present some known results about the growth of torsion in a quadratic extension. These results provide the context and motivation for the main results of this paper. Section 3 will contain the analysis of cyclic N-isogenies pointwise defined over a quadratic extension of $K \in \mathcal{S}$ which show the tools involved in the result of next section. In Section 4,  we classify $E^d(K)_{\text{tor}}$ where $E^d$ runs through all quadratic twists of a given elliptic curve $E/K$ with trivial $2$-torsion and in Section 5, we provide a proof to our main theorem. 

\subsection*{Notation}

Throughout this paper $K$ denotes a quadratic field. $L$ denotes a quadratic extension of $K$. Furthermore $\mathcal{S}$ stands for the set of non-cyclotomic imaginary quadratic fields of class number one. The rank and torsion subgroup computations are done by using the computer algebra software programs Magma and Sage.  

The code that verifies $X_0(77)$ computations can be found here:
$$ \href{https://github.com/irmak-balcik/X077}{https://github.com/irmak-balcik/X077} $$
      
\subsection*{Acknowledgement} The author is grateful 
to Sheldon Kamienny for his guidance and kind support as well as thankful to Samir Siksek for his insightful comments and sharing Magma codes for a part of the analysis of $X_0(77).$ 
This paper is also greatly benefited from the discussions with Noam Elkies, Michael Stoll (through mathoverflow) and many conversations with {\"O}zlem Ejder.

\section{Auxiliary Results}

As a prerequisite to proving Theorem \ref{mainthm} we need a classification of torsions over the base fields contained in 
$$ \mathcal{S}=\{ \mathbb{Q}(\sqrt{-2}), \mathbb{Q}(\sqrt{-7}), \mathbb{Q}(\sqrt{-11}),\mathbb{Q}(\sqrt{-19}),\mathbb{Q}(\sqrt{-43}),\mathbb{Q}(\sqrt{-67}), \mathbb{Q}(\sqrt{-163})\}.$$
Sarma and Saika \cite{SS18} determine the list of possible groups arising as torsion subgroups as $K$ varies in $\mathcal{S}$. The reader can notice that each list is strictly smaller than the list appearing in Kamienny and Kenku-Momose's list \eqref{eq:kamiennygroups}. The approach of \cite{SS18} depends on the method outlined in Kamienny and Najman's paper \cite{KN12}.

\begin{prop}[\cite{SS18}]\label{mainlist}
	Let $E$ be an elliptic curve over a given $K \in \mathcal{S}.$ Then $E(K)_{\text{tor}}$ is isomorphic to one of the Mazur's groups,
	\begin{enumerate}
		\item $\mathbb{Z}_{11}$ or $\mathbb{Z}_2 \oplus \mathbb{Z}_{10}$ when $K=\mathbb{Q}(\sqrt{-2}).$
		\item $\mathbb{Z}_{11}$, $\mathbb{Z}_{14}$ or $\mathbb{Z}_{15}$ when $K=\mathbb{Q}(\sqrt{-7}).$
		\item $\mathbb{Z}_{14}$, $\mathbb{Z}_{15}$ or $\mathbb{Z}_2\oplus \mathbb{Z}_{10}$ when $K=\mathbb{Q}(\sqrt{-11}).$
		\item $\mathbb{Z}_{11}$, $\mathbb{Z}_{2} \oplus \mathbb{Z}_{10}$ or $\mathbb{Z}_2 \oplus \mathbb{Z}_{12}$ when $K=\mathbb{Q}(\sqrt{-19}).$
		\item $\mathbb{Z}_{11}$, $\mathbb{Z}_{14}, \mathbb{Z}_{15}$ or $\mathbb{Z}_2 \oplus \mathbb{Z}_{12}$ when $K=\mathbb{Q}(\sqrt{-43}).$
		\item $\mathbb{Z}_{14}$, $\mathbb{Z}_{15}$ or $\mathbb{Z}_2 \oplus \mathbb{Z}_{12}$ when $K=\mathbb{Q}(\sqrt{-67}).$
		\item $\mathbb{Z}_{14}, \mathbb{Z}_{15}$ or $\mathbb{Z}_2 \oplus \mathbb{Z}_{12}$ when $K=\mathbb{Q}(\sqrt{-163}).$
	\end{enumerate} 
\end{prop}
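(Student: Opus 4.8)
The plan is to take the Kenku--Momose--Kamienny classification \eqref{eq:kamiennygroups} as the master list and, for each $K\in\mathcal{S}$ individually, decide which of those groups occurs as $E(K)_{\text{tor}}$. Every Mazur group in \eqref{eq:mazurgroups} already occurs over $\mathbb{Q}$ and hence over $K$ by base change, so the substantive content is the \emph{non-Mazur} part of \eqref{eq:kamiennygroups}, namely $\mathbb{Z}_{11},\mathbb{Z}_{13},\mathbb{Z}_{14},\mathbb{Z}_{15},\mathbb{Z}_{16},\mathbb{Z}_{18},\mathbb{Z}_2\oplus\mathbb{Z}_{10},\mathbb{Z}_2\oplus\mathbb{Z}_{12},\mathbb{Z}_3\oplus\mathbb{Z}_3,\mathbb{Z}_3\oplus\mathbb{Z}_6$ and $\mathbb{Z}_4\oplus\mathbb{Z}_4$: for each we must decide, field by field, whether it is realized. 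The uniform reformulation is that a group $G$ in this list occurs as the torsion of some $E/K$ exactly when the modular curve parametrizing the corresponding $K$-rational torsion structure — $X_1(N)$ when $G=\mathbb{Z}_N$, and $X_1(m,n)$ when $G=\mathbb{Z}_m\oplus\mathbb{Z}_n$ — has a non-cuspidal $K$-rational point. With the sole exception of $\mathbb{Z}_3\oplus\mathbb{Z}_3\subset\mathbb{Z}_3\oplus\mathbb{Z}_6$, each of these groups is maximal in \eqref{eq:kamiennygroups}, so producing such a point even forces $E(K)_{\text{tor}}\cong G$ exactly for all but finitely many parametrized curves; the class-number-one hypothesis on $K$ enters only to make the computations over $\mathcal{O}_K$ effective.

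I would dispatch the eleven candidates in three groups. First, the Weil pairing removes $\mathbb{Z}_3\oplus\mathbb{Z}_3$, $\mathbb{Z}_3\oplus\mathbb{Z}_6$ and $\mathbb{Z}_4\oplus\mathbb{Z}_4$ at once: full rational $3$-torsion forces $\zeta_3\in K$ and full rational $4$-torsion forces $i\in K$, whereas no field in $\mathcal{S}$ is $\mathbb{Q}(\sqrt{-3})$ or $\mathbb{Q}(i)$. Second, for $G\in\{\mathbb{Z}_{11},\mathbb{Z}_{14},\mathbb{Z}_{15}\}$ the curve $X_1(G)$ is an explicit elliptic curve over $\mathbb{Q}$, of rank zero with only cuspidal rational points; for each of the seven fields $K\in\mathcal{S}$ I would compute the Mordell--Weil group $X_1(G)(K)$ with Magma/Sage, sort its points into cusps, CM points and genuine non-cuspidal points, and conclude that $G$ is realized over $K$ precisely when a non-cuspidal $K$-point appears. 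Third, for $G\in\{\mathbb{Z}_{13},\mathbb{Z}_{16},\mathbb{Z}_{18}\}$ and for $G\in\{\mathbb{Z}_2\oplus\mathbb{Z}_{10},\mathbb{Z}_2\oplus\mathbb{Z}_{12}\}$ the curve $X_1(G)$ has genus at least $2$, so its quadratic points are finite in number; a degree-$2$ point gives a $\mathbb{Q}$-rational point of the symmetric square $X_1(G)^{(2)}$, and pushing it into the Jacobian $J_1(G)$, whose Mordell--Weil rank over $\mathbb{Q}$ vanishes in the relevant cases, lets one enumerate all quadratic points together with their residue fields (alternatively one may run the reduction/formal-immersion sieve of \cite{KN12}). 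That enumeration shows $\mathbb{Z}_{13},\mathbb{Z}_{16},\mathbb{Z}_{18}$ occur over no field of $\mathcal{S}$, while $\mathbb{Z}_2\oplus\mathbb{Z}_{10}$ and $\mathbb{Z}_2\oplus\mathbb{Z}_{12}$ occur over exactly the fields recorded. The realizability half of the statement is then finished by tabulating a witness curve for each pair $(K,G)$ in the statement: Mazur groups by base change from $\mathbb{Q}$, and each exotic group by substituting the coordinates of the non-cuspidal point found above into the universal elliptic curve over $X_1(G)$.

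The principal obstacle is the genus $\geq 2$ layer: rigorously certifying that $X_1(13),X_1(16),X_1(18),X_1(2,10),X_1(2,12)$ carry no \emph{further} quadratic points with residue field in $\mathcal{S}$ beyond the ones exhibited. This requires an exact Mordell--Weil computation for each Jacobian, a presentation of $J_1(G)(\mathbb{Q})$ in Mumford form, a Mordell--Weil sieve (or a direct Chabauty argument on $X_1(G)^{(2)}$) to rule out extra degree-$2$ divisor classes, and then careful bookkeeping to discard cuspidal and CM points and to read off which residue fields lie in $\mathcal{S}$. A secondary delicate point is in the genus-$1$ layer: over several fields of $\mathcal{S}$ the curves $X_1(14)$ and $X_1(15)$ do acquire positive rank — exactly what yields the families realizing $\mathbb{Z}_{14}$ and $\mathbb{Z}_{15}$ — and one must both detect this rank jump reliably and check that the parametrized curves acquire no torsion beyond $G$.
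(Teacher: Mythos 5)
The paper offers no proof of this proposition: it is imported verbatim from Sarma--Saikia \cite{SS18}, with the surrounding text only remarking that their argument follows the method of Kamienny--Najman \cite{KN12}. Your blind reconstruction is, in outline, exactly that method --- reduce to the non-Mazur entries of \eqref{eq:kamiennygroups}, kill $\mathbb{Z}_3\oplus\mathbb{Z}_3$, $\mathbb{Z}_3\oplus\mathbb{Z}_6$, $\mathbb{Z}_4\oplus\mathbb{Z}_4$ by the Weil pairing, handle $X_1(11),X_1(14),X_1(15)$ by Mordell--Weil computations over each $K$, and handle the genus~$\ge 2$ curves $X_1(13),X_1(16),X_1(18),X_1(2,10),X_1(2,12)$ via rational points on the symmetric square sitting inside a rank-zero Jacobian (or the formal-immersion sieve) --- so there is no methodological divergence to report, and the strategy is sound. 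Two small points deserve tightening. First, ``Mazur groups occur over $K$ by base change'' is not quite an argument: a curve $E/\mathbb{Q}$ with $E(\mathbb{Q})_{\mathrm{tor}}\cong G$ may acquire extra torsion over $K$, so to realize $G$ \emph{exactly} you need either an explicit witness with $E(K)_{\mathrm{tor}}\cong G$ for each pair $(K,G)$, or an appeal to the fact that among the infinitely many $\mathbb{Q}$-curves parametrized by the rational curve $X_1(G)$ only finitely many twists/extensions pick up new torsion over the fixed $K$ (cf.\ Proposition \ref{prop1} and Lemma \ref{lem1}). Second, where a group $G$ is maximal in \eqref{eq:kamiennygroups}, a non-cuspidal $K$-point forces $E(K)_{\mathrm{tor}}\cong G$ for \emph{every} parametrized curve, not merely for all but finitely many; conversely the genuinely delicate verifications you correctly isolate (exact Jacobian ranks, complete enumeration of quadratic points and their residue fields, rank jumps of $X_1(14)$ and $X_1(15)$ over the fields of $\mathcal{S}$) are precisely the computational content of \cite{SS18} and cannot be waved through without the machine calculations.
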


Let $E$ be an elliptic curve given by $y^2 = f(x)$ with $f$ is a monic cubic polynomial over $K.$ There are a couple of observations we can make:

\begin{rem}\label{rem1}
	
(i) If $f$ is irreducible in $K$, then it remains irreducible over $L$. Otherwise, $f$ has
a root $\alpha$ in $L$ and the degree of $K(\alpha)$ over $K$ is divisible by $3$ but it is not possible
since $K(\alpha)$ is contained in $L$. Hence $E(K)[2]\simeq \mathbb{Z}_1$ then $E(L)[2]\simeq \mathbb{Z}_1.$

(ii) If $E(K)$ has a point of order $2$, then $E^d(K)$ has a point of order $2,$ too. This simply follows by if $f(x) = x^3 + ax^2 + bx + c$ for some $a,b,c \in K$ and for $d \in K$, $d\neq 0$ let $f^d(x) = x^3 +dax^2 +d^2bx+d^3c$. If $r \in K$ and $f(r)=0$, then $f^d(dr) = 0$ and this induces a bijection between the $K$-roots of $f$ and $f^d$. Hence $E(K)[2] \simeq E^d(K)[2].$ 
\end{rem}

\begin{prop}\label{prop1}
	Suppose $E$ is an elliptic curve over a number field $K$. Then for all but finitely many quadratic twists $E^d$ of $E$ with $d$ a non-square in $K,$ $E^d(K)$ has no odd torsion. 
\end{prop}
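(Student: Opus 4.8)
The plan is to show that odd torsion on quadratic twists is forced onto finitely many isogeny-theoretic "slots," and then that each slot can be populated by at most finitely many twists. First I would reduce to odd torsion of prime order: if $E^d(K)$ has a point of odd order $m > 1$, it has a point of prime order $p \mid m$, so it suffices to bound, for each odd prime $p$, the set of $d$ (up to squares) for which $E^d(K)$ has a point of order $p$. Fixing such a $p$, I observe that a $K$-rational $p$-torsion point on $E^d$ corresponds to a $K$-rational subgroup $C \subset E[p]$ on which $\mathrm{Gal}(\overline{K}/K)$ acts through the quadratic character $\chi_d$ associated to $K(\sqrt d)/K$ — equivalently, the mod-$p$ Galois representation $\overline{\rho}_{E,p}$ restricted to some line in $E[p]$ is the character $\chi_d$. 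In particular $\overline{\rho}_{E,p}$ is reducible: its semisimplification contains a character $\psi$ with $\psi^2$ appearing among the characters by which Galois acts on $E[p]$, and $\chi_d$ is determined by $E$ up to the finitely many characters cut out by $E[p]$ (there are at most two possible lines, hence at most two possible $\chi_d$).

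Next I would split into two cases according to whether $\overline{\rho}_{E,p}$ is reducible. If $\overline{\rho}_{E,p}$ is irreducible, then no such $C$ exists for this $p$ and no twist acquires a $p$-torsion point — so these primes contribute nothing. If $\overline{\rho}_{E,p}$ is reducible, then by the standard finiteness of $K$-rational cyclic isogenies (Mazur-type bounds, or simply that $X_0(p)$ has only finitely many $K$-points once $p$ is large, together with the fact that $E$ has only finitely many $K$-rational cyclic subgroups of any fixed order) the relevant characters $\psi$, and hence the candidate quadratic characters $\chi_d$, are constrained; crucially, since the set of $p$ for which torsion can grow at all is itself finite by Proposition \ref{mainlist} (or Kamienny--Kenku--Momose over general quadratic $K$), only finitely many primes $p$ need be considered. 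For each such $p$ and each admissible $C$, the twisting parameter $d$ is pinned down modulo squares: if two twists $E^{d_1}$ and $E^{d_2}$ both have a $K$-rational point generating the same line $C$, then $\chi_{d_1} = \chi_{d_2}$ as characters of $\mathrm{Gal}(\overline K/K)$, forcing $d_1/d_2 \in (K^\times)^2$. Since there are at most two lines in $E[p]$ fixed by Galois, we get at most two square-classes of $d$ per prime $p$, and finitely many primes, hence finitely many twists in total.

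The main obstacle is making the reducibility dichotomy genuinely finite, i.e. bounding the set of primes $p$ to be considered without circularly invoking the main theorem. I would handle this by noting that any $p$ for which some twist $E^d$ acquires a point of order $p$ over $K$ gives a point of order $p$ on an elliptic curve over the \emph{quadratic} field $K$ — namely $E^d$ itself — so $p$ lies in the finite set of primes occurring in Kamienny--Kenku--Momose's list \eqref{eq:kamiennygroups}, and in fact in the smaller list of Proposition \ref{mainlist} when $K \in \mathcal S$; this is independent of the $L$ in the main theorem. A secondary subtlety is that a twist might acquire full $\mathbb Z_p \oplus \mathbb Z_p$ torsion, but this requires the mod-$p$ representation to be trivial on $\mathrm{Gal}(\overline K/K(\sqrt d))$, an index-$\le 2$ subgroup, which is even more restrictive and is absorbed into the same counting argument. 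Once the prime set is finite and each prime contributes finitely many square-classes, the proposition follows.
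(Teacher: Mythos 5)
Your argument is sound, and it is worth noting that the paper does not actually prove Proposition \ref{prop1} at all: it simply cites Mazur--Rubin \cite[Lemma 5.5]{MR10}. What you have written is, in substance, a self-contained reconstruction of that lemma's proof. The two load-bearing points are exactly right: (i) a $K$-point of odd prime order $p$ on $E^d$ transports, via the $K(\sqrt d)$-isomorphism $\phi\colon E^d\to E$ with $\phi^\sigma=\chi_d(\sigma)\phi$, to a Galois-stable line in $E[p]$ on which $\mathrm{Gal}(\overline K/K)$ acts through $\chi_d$, and since $p$ is odd the character attached to a stable line is well defined and determines $d$ up to squares; (ii) the semisimplification of $\overline{\rho}_{E,p}$ has at most two characters, so at most two square classes of $d$ occur per prime, and the set of relevant primes is finite. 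Two small corrections. First, the proposition is stated for an arbitrary number field $K$, but your finiteness of the prime set appeals to Kamienny--Kenku--Momose and hence silently assumes $K$ quadratic; for general $K$ you should invoke Merel's uniform boundedness theorem instead (the rest of the argument is unchanged). This is harmless for the paper, which only ever applies the proposition over quadratic fields. Second, your parenthetical ``there are at most two possible lines'' is not literally true when $\overline{\rho}_{E,p}$ is scalar (then every line is stable), but the correct and sufficient statement --- at most two possible \emph{characters}, hence at most two possible $\chi_d$ --- is what your counting actually uses. The detour through reducibility of $\overline{\rho}_{E,p}$ and ``finiteness of $K$-rational cyclic isogenies'' is unnecessary scaffolding: for a fixed $p$ there are only $p+1$ lines in $E[p]$ in any case, and all you need is the bound on eigen-characters. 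With those trims the write-up is a complete proof and arguably an improvement on the paper, which leaves the reader to chase the reference.
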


\begin{proof} 
	The proof is due Mazur and Rubin \cite[Lemma 5.5]{MR10}. 
\end{proof}

The next lemma plays a key role in the proof of the main theorem. It shows that the growth of odd-torsion can be understood with torsion arising over base field.

\begin{lem}(\cite{GJT14},Cor 4)\label{lem1}
	If n is an odd positive integer we have
	$$ E(K(\sqrt{d}))[n] \simeq E(K)[n] \oplus E^d(K)[n] $$
\end{lem}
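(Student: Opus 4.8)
The plan is to work with the two quadratic characters in play. Let $\sigma$ denote the non-trivial element of $\mathrm{Gal}(K(\sqrt d)/K)$, and recall that the quadratic twist $E^d$ comes equipped with an isomorphism $\psi\colon E^d \xrightarrow{\sim} E$ defined over $K(\sqrt d)$ which satisfies $\sigma(\psi) = \psi\circ[-1]$ (equivalently, $\psi$ intertwines the Galois action on $E^d(K(\sqrt d))$ with the \emph{twisted} action $P\mapsto {}^\sigma(\psi^{-1}(\text{--}))$ correction below). Concretely, $\psi$ identifies $E^d(K(\sqrt d))$ with $E(K(\sqrt d))$ as groups, and under this identification the subgroup $E^d(K)$ corresponds to $\{Q\in E(K(\sqrt d)) : {}^\sigma Q = -Q\}$, the $(-1)$-eigenspace for $\sigma$, while $E(K)$ is of course the $(+1)$-eigenspace $\{Q : {}^\sigma Q = Q\}$.

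First I would restrict attention to the finite group $M := E(K(\sqrt d))[n]$, which carries a linear action of the order-two group $\langle\sigma\rangle$. Since $n$ is odd, $2$ is invertible in $\Z/n\Z$, so the group ring $(\Z/n\Z)[\langle\sigma\rangle]$ is semisimple and $M$ decomposes canonically as a direct sum of its eigenspaces: $M = M^{+}\oplus M^{-}$, where $M^{\pm} = \ker(\sigma\mp 1)$ and the projectors are $\tfrac{1}{2}(1\pm\sigma)$ (which make sense because $2$ is a unit mod $n$). This is the only place oddness of $n$ is used, and it is exactly what makes the splitting canonical rather than merely up to extension.

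Next I would identify the two summands. The $(+1)$-eigenspace is by definition $M^{+} = E(K(\sqrt d))[n]^{\sigma} = E(K)[n]$, since a point of $E(K(\sqrt d))$ fixed by $\mathrm{Gal}(K(\sqrt d)/K)$ lies in $E(K)$. For the $(-1)$-eigenspace, I would transport the computation to $E^d$ via $\psi$: because $\psi$ twists the Galois action by $[-1]$, the condition ${}^\sigma Q = -Q$ on $Q\in E(K(\sqrt d))$ becomes the condition that $\psi^{-1}(Q)$ is $\sigma$-fixed on $E^d$, i.e. lies in $E^d(K)$; restricting to $n$-torsion gives $M^{-}\simeq E^d(K)[n]$. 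Combining, $E(K(\sqrt d))[n] = M = M^{+}\oplus M^{-}\simeq E(K)[n]\oplus E^d(K)[n]$, which is the claim.

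The only real subtlety — and the step I would be most careful about — is getting the twisting relation between $\psi$ and $\sigma$ exactly right, since that is what pins down which eigenspace is $E^d(K)[n]$ as opposed to a spurious sign; this is standard (see e.g. Silverman, \emph{Advanced Topics}, X\S5, or \cite{MR10}) but worth stating cleanly. Everything else is formal: the eigenspace decomposition is elementary linear algebra over $\Z/n\Z$ once $2$ is invertible, and no input about the Mordell–Weil group or about $K$ being imaginary quadratic is needed. In fact the same argument shows the isomorphism is functorial in $E$ and compatible with the inclusions into $E(K(\sqrt d))[n]$, which is all that is used later in the paper.
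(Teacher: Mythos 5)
Your proof is correct. Note that the paper itself gives no argument for this lemma: it is quoted directly from the cited reference (\cite{GJT14}, Corollary 4), so there is nothing internal to compare against. Your eigenspace argument is the standard proof of that result: since $n$ is odd, $2$ is a unit in $\mathbb{Z}/n\mathbb{Z}$, so the involution $\sigma$ acting on $E(K(\sqrt{d}))[n]$ yields a canonical splitting into the $(+1)$- and $(-1)$-eigenspaces via the idempotents $\tfrac{1}{2}(1\pm\sigma)$, and the twisting isomorphism $\psi\colon E^d\to E$ with $\psi^{\sigma}=\psi\circ[-1]$ identifies the $(-1)$-eigenspace with $E^d(K)[n]$. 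You are also right to flag the sign convention for $\psi^{\sigma}$ as the one point requiring care; with that verified, the argument is complete and uses no hypotheses on $K$ beyond $d$ being a non-square, consistent with how the lemma is applied in the paper.
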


An immediate corollary of the combination of these results follows.

\begin{cor}\label{cor6}
Let E be an elliptic curve defined over a quadratic number field $K$ with trivial 2-torsion. Then $E(K(\sqrt{d}))_{\text{tor}}$ is of odd order for all non-squares $d \in K$ and with only finitely many exceptions,
$$
E(K(\sqrt{d}))_{\text{tor}} = E(K)_{\text{tor}}
$$
\end{cor}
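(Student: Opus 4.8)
The plan is to combine the three preceding results directly. First I would establish that $E(K(\sqrt{d}))_{\text{tor}}$ has odd order for every non-square $d \in K$. By Remark \ref{rem1}(i), since $E(K)[2] \simeq \mathbb{Z}_1$ the cubic $f$ defining $E$ is irreducible over $K$ and therefore remains irreducible over the quadratic extension $L = K(\sqrt{d})$ (a root would generate a cubic subextension of a quadratic extension, which is impossible). Hence $E(L)[2] \simeq \mathbb{Z}_1$, so $E(L)_{\text{tor}}$ contains no element of order $2$; a finite abelian group with no $2$-torsion has odd order. This already rules out all even-order groups from the torsion of $E$ over any quadratic extension.

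Next I would pin down the odd part. Fix a non-square $d \in K$ and write $L = K(\sqrt{d})$. For any odd $n$, Lemma \ref{lem1} gives $E(L)[n] \simeq E(K)[n] \oplus E^d(K)[n]$. Since $E(L)_{\text{tor}}$ has odd order, it equals $\bigoplus_{p \text{ odd}} E(L)[p^\infty]$, and taking $n$ to be a large enough odd integer (say divisible by the exponent of $E(L)_{\text{tor}}$, which is bounded uniformly by the Kamienny--Kenku--Momose theorem on quadratic points, or more simply by Proposition \ref{mainlist} applied over $L$) we get
$$
E(L)_{\text{tor}} \simeq E(K)_{\text{tor}} \oplus E^d(K)_{\text{tor}},
$$
using also that $E(K)_{\text{tor}}$ is already of odd order by hypothesis (its $2$-part is trivial) together with Remark \ref{rem1}(ii) ensuring $E^d(K)[2] \simeq E(K)[2] \simeq \mathbb{Z}_1$, so $E^d(K)_{\text{tor}}$ has odd order as well. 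Thus the growth of torsion from $K$ to $L$ is measured precisely by the odd torsion appearing on the quadratic twist $E^d$ over $K$.

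Finally, to get the ``all but finitely many $d$'' statement, I would invoke Proposition \ref{prop1}: for all but finitely many non-square $d \in K$, the twist $E^d(K)$ has no odd torsion, i.e. $E^d(K)_{\text{tor}} = \mathbb{Z}_1$ (it also has no $2$-torsion by Remark \ref{rem1}(ii)). For every such $d$ the displayed isomorphism collapses to $E(L)_{\text{tor}} = E(K)_{\text{tor}}$, which is the claim. The only real subtlety is making sure the finite exceptional set is independent of which odd $n$ one uses in Lemma \ref{lem1}; this is automatic because Proposition \ref{prop1} excludes the existence of \emph{any} odd-order point on $E^d(K)$ for the non-exceptional $d$, so no choice of $n$ is needed there. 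I expect no serious obstacle: the corollary is a formal consequence of Remark \ref{rem1}, Lemma \ref{lem1}, and Proposition \ref{prop1}, with the uniform bound on torsion over quadratic fields used only to justify passing from $[n]$-torsion to the full torsion subgroup.
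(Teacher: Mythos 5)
Your proposal is correct and follows essentially the same route as the paper: Remark \ref{rem1} for the oddness of the torsion, Lemma \ref{lem1} to reduce any growth to nontrivial odd torsion on the twist $E^d$, and Proposition \ref{prop1} for the finiteness of the exceptional set of $d$. One small note: to pass from $E(L)[n]$ to $E(L)_{\text{tor}}$ you should not cite uniform torsion bounds for quadratic fields (here $L$ is quartic over $\mathbb{Q}$, so neither Kamienny--Kenku--Momose nor Proposition \ref{mainlist} applies to $L$); it suffices that $E(L)_{\text{tor}}$ is a finite group of odd order by Mordell--Weil, so one may simply take $n$ to be its exponent.
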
 

\begin{proof}
	Suppose $E/K$ and $d$ satisfy the hypothesis. Since  $E(K)_{\text{tor}} \subseteq E(K(\sqrt{d}))_{\text{tor}},$ let us assume $E(K)_{\text{tor}} \subsetneq E(K(\sqrt{d}))_{\text{tor}}.$ Note that the claim  where $E(K(\sqrt{d}))[2] =\mathbb{Z}_1$ follows immediately from the observation in Remark \ref{rem1}. By Lemma \ref{lem1}, $E(K(\sqrt{d}))_{\text{tor}}$ is strictly larger than $E(K)_{\text{tor}}$ only if $E^d(K)[n]$ is non-trivial for some odd number $n.$ From Proposition \ref{prop1} , we know that there is only a finite number of quadratic twists of $E$ with a non-trivial odd torsion, which therefore yields finitely many quadratic extensions of $K$ in which $E(K)_{\text{tor}}$ grows as claimed.
\end{proof}

\begin{rem}
	Corollary \ref{cor6} can in fact be generalized to any given elliptic curve over a quadratic field. See Corollary 1 in \cite{Bal(2)21}.
	
\end{rem}

Corollary \ref{cor6} particularly shows that the list of odd-order torsion groups given in Sarma and Saika's classification must also appear as torsion group over a quadratic extension of base field.  Thus a large number of the torsion structures that we search for are known. 

The following result will be useful for determining the growth of odd torsion subgroup in a quadratic extension of a number field.

\begin{prop}(\cite[Proposition 3]{New16})\label{newman}
Let $K$ be a quadratic field and E an elliptic curve defined over $K$ with $|E(K)|=m$. Let $d \in K$ be a non-square and let $L=K(\sqrt{d}).$ If $H$ is a subgroup of $E^d(K)$ of odd order $n,$ then there is a Gal$(\overline{K}/K)$-invariant subgroup $J$ of $E(L)$ such that $J \simeq H \oplus E(K)_{\text{tor}}.$
\end{prop}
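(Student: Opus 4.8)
The plan is to translate the hypothesis about a point of odd order on the quadratic twist $E^d$ into a statement about $E$ over $L=K(\sqrt{d})$ via the canonical twisting isomorphism, then check that the resulting subgroup is both Galois-stable and splits as the claimed direct sum. Recall that $E^d$ is isomorphic to $E$ over $L$; fix such an isomorphism $\psi\colon E^d \xrightarrow{\sim} E$ defined over $L$, and let $\sigma$ be the nontrivial element of $\mathrm{Gal}(L/K)$. The defining property of the quadratic twist is that $\psi$ and its conjugate $\psi^\sigma$ differ by $[-1]$, i.e.\ $\psi^\sigma = \psi \circ [-1] = [-1]\circ\psi$ (the twisting cocycle sends $\sigma$ to $[-1]$). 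I would set $J := \psi(H) \subseteq E(L)$.

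First I would verify $J$ has odd order $n$: since $\psi$ is an isomorphism, $J \cong H$, so $|J| = n$ is odd, and $J \subseteq E(L)$ because both $\psi$ and the points of $H \subseteq E^d(K) \subseteq E^d(L)$ are defined over $L$. Next, the Galois-invariance: a point $Q \in J$ has the form $Q = \psi(R)$ with $R \in H \subseteq E^d(K)$; I want to show $Q^\tau \in J$ for all $\tau \in \mathrm{Gal}(\overline K/K)$. Any such $\tau$ restricts to $L$ as either the identity or $\sigma$. If $\tau|_L = \mathrm{id}$ then $Q^\tau = \psi(R)^\tau = \psi^\tau(R^\tau) = \psi(R) = Q$ since $\psi$ and $R$ are $L$-rational. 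If $\tau|_L = \sigma$ then $Q^\tau = \psi^\sigma(R^\sigma) = \psi^\sigma(R) = [-1]\psi(R) = -Q$, because $R \in E^d(K)$ is fixed by $\sigma$. In either case $Q^\tau \in \{Q, -Q\} \subseteq J$ (as $H$, hence $J$, is a subgroup closed under negation), so $J$ is $\mathrm{Gal}(\overline K/K)$-stable.

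Finally I would establish the direct-sum decomposition $J \cong H \oplus E(K)_{\text{tor}}$. Wait — this requires interpreting the statement correctly: the claim is an abstract group isomorphism, and since $n = |H|$ is odd while the relevant torsion may share factors, one should check the orders are coprime or argue structurally. Actually, re-reading Newman's statement, I believe the intended content is that inside $E(L)$ the subgroup $J$ generated appropriately sits as $\psi(H) \oplus E(K)_{\text{tor}}$ where the two summands have trivial intersection. The key point is that $E(K)_{\text{tor}} \subseteq E(L)$ is Galois-fixed pointwise over $K$, whereas the nontrivial elements of $\psi(H)$ are sent to their negatives by $\sigma$; an element in the intersection would be both fixed and negated by $\sigma$, forcing it to be $2$-torsion, hence trivial since $|\psi(H)|$ is odd. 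Thus $\psi(H) \cap E(K)_{\text{tor}} = \{O\}$, the subgroup they generate is their internal direct sum, and it is Galois-stable since each factor is. I would take $J$ to be this sum (of order $mn'$ where $n' = n$, or more precisely $|J| = |H|\cdot|E(K)_{\text{tor}}|$), giving $J \simeq H \oplus E(K)_{\text{tor}}$ as desired.

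The main obstacle I anticipate is bookkeeping the twisting cocycle correctly — pinning down that $\psi^\sigma = [-1]\circ\psi$ with the right sign and ensuring this is exactly the relation that makes $E^d$ the $d$-twist rather than some other twist — and then using the parity of $n$ at precisely the two places it is needed (to kill the intersection with $E(K)_{\text{tor}}$ and to rule out the $-Q$ case creating problems). Everything else is a routine diagram chase with $\mathrm{Gal}(\overline K / K) \twoheadrightarrow \mathrm{Gal}(L/K)$.
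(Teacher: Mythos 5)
Your proof is correct and is essentially the argument in the cited source: the paper itself gives no proof of Proposition \ref{newman}, deferring to Newman, whose argument is exactly this use of the $L$-rational twisting isomorphism $\psi$ with $\psi^{\sigma}=[-1]\circ\psi$, Galois-stability of $\psi(H)$ because $\sigma$ acts as $-1$ on it, and triviality of $\psi(H)\cap E(K)_{\text{tor}}$ from oddness of $n$. The only cosmetic issue is that you first set $J:=\psi(H)$ and later correctly redefine $J$ as the internal direct sum $\psi(H)\oplus E(K)_{\text{tor}}$; with that fixed, nothing is missing.
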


\section{Cyclic Isogenies}

Let $K$ be fixed in $\mathcal{S}.$ In this section, we classify $K$-rational cyclic $N$-isogenies of elliptic curves defined over $K$. The values of $N$ will vary according to the prime factors arising in the torsion subgroup over the base field. For a given elliptic curve $E$ defined over $K$, we recall that a $K$-rational $\bold{ cyclic\ N}$-$\bold{isogeny}$ of $E/K$ is a cyclic subgroup of $E(\overline{K})$ of order $N$ which is invariant under the action of Gal$(\overline{K}/K).$ 

For a given $N$-isogeny $C$, Magma describes a polynomial $f_C$ whose roots are precisely the $x$-coordinates of the points in $C$. Let $K(f_C)$ denote the splitting field of $f_C$ over $K.$ If $C$ is pointwise defined over a field $L$ then $f_C$ must split completely over $L$, i.e. $K(f_C)$ must be contained in $L.$ In particular, if $L$ is a quadratic extension of $K$, then $f_C$ must have irreducible factors of degree at most 2 over $K.$ 

With the study of quadratic points on various modular curves throughout this section, our goal is to obtain the following.

 \begin{thm}\label{thmc}
 	Let $K$ be a non-cyclotomic imaginary quadratic field of class number one and $E$ an elliptic curve defined over $K$. Then, $E$ has no cyclic N-isogeny defined over a quadratic extension of $K$ for $N=21,33,35,45,55,77$. 
 	In particular, a $27$-isogeny exists only in a quadratic extension of $K= \mathbb{Q}(\sqrt{-2})$ and $\mathbb{Q}(\sqrt{-11}).$ 
 	
 \end{thm}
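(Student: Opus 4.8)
The plan is to reduce Theorem \ref{thmc} to a finite, case-by-case analysis of quadratic points on the modular curves $X_0(N)$ for $N \in \{21, 27, 33, 35, 45, 55, 77\}$, and then to check, for each quadratic point found, whether the corresponding cyclic $N$-isogeny is actually \emph{pointwise} defined over a quadratic extension of some $K \in \mathcal{S}$. By the discussion preceding the theorem, a $K$-rational cyclic $N$-isogeny $C$ corresponds to a point of $X_0(N)$ defined over $K$ (equivalently a rational point of $X_0(N)$ when $K = \mathbb{Q}$, or a $K$-rational point after base change), and $C$ is pointwise defined over a quadratic $L/K$ precisely when the isogeny-kernel polynomial $f_C$ splits into factors of degree $\le 2$ over $K$. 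So the first step is: for each $N$, enumerate the quadratic points of $X_0(N)$, using the sources the paper has already cited — Ozman--Siksek \cite{OS18} for the non-hyperelliptic genus $3,5$ curves $N=45,55$; Mumford-representation extraction from $J_0(N)(\mathbb{Q})$ together with \cite{Ozman} for the hyperelliptic genus $3$ curves $N=33,35$ (using $\mathrm{rk}\, J_0(N)(\mathbb{Q})=0$); relative symmetric Chabauty plus the Mordell--Weil sieve à la \cite{Box20} for the genus $7$ curve $N=77$; and, for the elliptic-curve cases $N=21,27$ where $X_0(N)$ can have positive rank over some $K \in \mathcal{S}$, the division-polynomial method instead of a full point enumeration.

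For each $N$ the argument then proceeds in three sub-steps. First, identify the finitely many $j$-invariants (equivalently, $X_0(N)$-points) that are defined over $\mathbb{Q}$ or over a field $K \in \mathcal{S}$, discarding cusps and any points whose field of definition is not one of the seven fields in $\mathcal{S}$ (or $\mathbb{Q}$, since a curve over $\mathbb{Q} \subset K$ is also a curve over $K$). Second, for each surviving point take the corresponding elliptic curve $E$ (and its quadratic twists, which share the same point on $X_0(N)$) and compute the isogeny-kernel polynomial $f_C$ in Magma; check whether $f_C$ factors over $K$ into pieces of degree at most $2$. Third, if it does, the isogeny is realized over a specific quadratic $L/K$ and we record it; if for every relevant $K$ and every twist the factorization always produces an irreducible factor of degree $\ge 3$, then no such $N$-isogeny exists pointwise over a quadratic extension of $K$, which is the desired conclusion for that $N$. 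Running this through $N = 21, 33, 35, 45, 55, 77$ should yield \enquote{no cyclic $N$-isogeny over a quadratic extension}, while for $N = 27$ exactly the fields $\mathbb{Q}(\sqrt{-2})$ and $\mathbb{Q}(\sqrt{-11})$ survive, giving the \enquote{in particular} clause.

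I expect the main obstacle to be the case $N = 77$: $X_0(77)$ has genus $7$, so determining \emph{all} its quadratic points requires the full machinery of relative symmetric Chabauty on the symmetric square $X_0(77)^{(2)}$ combined with a Mordell--Weil sieve, including finding explicit generators (or enough of them) for $J_0(77)(\mathbb{Q})$, choosing auxiliary primes for the sieve, and verifying the Chabauty rank/transversality conditions at those primes — this is precisely why the paper sets up the reference to \cite{Box20} and provides verification code. The secondary difficulties are bookkeeping ones: for the genus $3$ hyperelliptic cases one must be careful that the Mumford-representation extraction captures \emph{all} quadratic points (exceptional points from $J_0(N)(\mathbb{Q})$ plus the non-exceptional pencil, where \cite{Ozman}'s twist classification is needed for $N=35$), and for $N=21,27$ one must argue that the division-polynomial computation over each $K \in \mathcal{S}$ is genuinely exhaustive despite the positive Mordell--Weil rank. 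Once all quadratic points are in hand, the remaining isogeny-splitting checks are routine finite Magma computations.
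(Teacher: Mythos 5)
Your proposal matches the paper's proof in both structure and in every method chosen per level: quadratic points via Ozman--Siksek for $N=45,55$, rank-zero Jacobian/Mumford-representation arguments (with the twist classification of \cite{Ozman} for $N=35$) for the hyperelliptic genus-3 cases, relative symmetric Chabauty plus a Mordell--Weil sieve following \cite{Box20} for $X_0(77)$, the division-polynomial method for $N=21,27$ where the rank of $X_0(N)$ jumps over some $K\in\mathcal{S}$, and the final $f_C$-splitting check to decide pointwise definition over a quadratic extension. This is essentially the same approach as the paper, so no further comparison is needed.
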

 
\section*{  $N=15$ }

By Proposition \ref{mainlist}, $\mathbb{Z}_{15}$ is realized (infinitely many times) as torsion subgroup of elliptic curves defined over $K$ belonging to $\mathcal{E}= \{\mathbb{Q}(\sqrt{-7}),\mathbb{Q}(\sqrt{-11}),\mathbb{Q}(\sqrt{-43}), \mathbb{Q}(\sqrt{-67}), \mathbb{Q}(\sqrt{-163})\}.$ We study further the occurance of torsion subgroup of order 15 in a quadratic extension of each remaining field $K$ in  $\{\mathbb{Q}(\sqrt{-2}),\mathbb{Q}(\sqrt{-19})\}.$ We compute $X_0(15)(\mathbb{K})$ has rank 0 and torsion $\mathbb{Z}_2 \oplus \mathbb{Z}_4$.  The computations in \cite{New16} show that there are 4 non-cuspidal torsion points. Only two of them correspond to a cyclic 15-isogeny defined over extensions of $K$ of degree at least 2. More precisely, each of these non-cuspidal points gives rise to a torsion subgroup of order 15 defined over either $K(\sqrt{5})$ or $K(\sqrt{-15})$. Moreover, these torsion points produce four elliptic curves over $K$ (up to isomorphism) with torsion isomorphic to $\mathbb{Z}_{15}$ upon the base change to $K(\sqrt{5})$ or $K(\sqrt{-15}).$ 

\section*{ $N=21$ }
$X_0(21)(\mathbb{Q})$ has rank 0, torsion $\mathbb{Z}_2 \oplus \mathbb{Z}_4$ and 4 cusps. The torsion does not change when the base field $\mathbb{Q}$ is extended to any $K$ in $\mathcal{S}.$ As shown in Table \ref{table:2}, each of four non-cuspidal points corresponds to a cyclic 21-isogeny defined over an extension of $K$ of degree at least 3. So none of these rational points produce a cyclic 21-isogeny over a quadratic extension of $K.$

\begin{table}[h!]
\centering
\begin{tabular}{ p{2.5cm} p{4cm} p{5cm} p{2cm}  }
 
 Point & j(E) & E & $f_C$\\
 \hline
 (-1/4,1/8)  &  3375/2 &  [20/441, -16/27783]   &   (1,3,6)\\
 \hline
 (2,-1) &    -189613868625/128 & [-1915/36, -48383/324]   & (1,3,6)\\
 \hline
 (-1,2) & -1159088625/2097152 & [-505/192, -23053/6912] & (1,3,6)\\
 \hline
 (5,-13)   & -140625/8 & [-1600/147, -134144/9261] & (1,3,3,3)\\
 \hline
\end{tabular}
\caption{We use the model $y^2 + xy = x^3 - 4x - 1 $ for $X_0(21).$}
\label{table:2}
\end{table}

The rank of $X_0(21)$ only goes up in $K=\mathbb{Q}(\sqrt{-43}).$ Hence a search of points on $X_0(21)(K)$ is computationally infeasible to determine whether $E(\overline{K})$ can have a cyclic subgroup of order 21. Instead we will use the division polynomial method to prove the following.

\begin{prop}\label{propo}

Let $E$ be an elliptic curve over $K=\mathbb{Q}(\sqrt{-43})$ and $E^d$ runs through the quadratic twists of $E$. If $E(K) \simeq \mathbb{Z}_7$ then $E^d(K) \not\simeq \mathbb{Z}_3$ for all non-square $d \in K$.

\end{prop}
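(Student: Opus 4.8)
The plan is to argue by contradiction using the division polynomial method outlined earlier. Suppose $E/K$ with $K=\mathbb{Q}(\sqrt{-43})$ has $E(K)\simeq\mathbb{Z}_7$ and some quadratic twist $E^d$ has $E^d(K)\simeq\mathbb{Z}_3$ with $d$ a non-square in $K$. By Lemma \ref{lem1}, $E(K(\sqrt d))[21]\simeq E(K)[21]\oplus E^d(K)[21]\simeq \mathbb{Z}_7\oplus\mathbb{Z}_3\simeq\mathbb{Z}_{21}$, so $E$ acquires a point of order $21$ — equivalently a $K$-rational cyclic $21$-isogeny $C=\langle P\rangle$ — over the quadratic field $L=K(\sqrt d)$. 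By the discussion opening Section 3, the polynomial $f_C$ cutting out the $x$-coordinates of $C$ must then split into factors of degree at most $2$ over $K$. The strategy is to show this is impossible by a direct computation with the $21$-division polynomial of a general curve, exploiting the fact that $E(K)$ already contains a rational $7$-torsion point.

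Concretely, I would first use the rational $7$-torsion: an elliptic curve over $K$ with a rational point of order $7$ lies in the one-parameter Tate-normal-form family (the universal curve over $X_1(7)$), so write $E=E_t$ for a parameter $t\in K$ (with $t$ avoiding the finitely many values where the model degenerates). Then the condition ``$E_t$ has a cyclic $21$-isogeny'' forces $E_t$ to correspond to a $K$-point on $X_0(21)$, hence — combining with the $7$-structure — to a $K$-point on the curve $X_1(7)\times_{X(1)}X_0(21)$, which maps to $X_0(21)$. Over $\mathbb{Q}$ the curve $X_0(21)$ is the elliptic curve $y^2+xy=x^3-4x-1$ of rank $0$ with exactly the four non-cuspidal rational points in Table \ref{table:2}, each of which (as recorded there) gives a cyclic $21$-isogeny only over an extension of degree $\geq 3$. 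So the only danger is a genuinely new $K=\mathbb{Q}(\sqrt{-43})$-point on $X_0(21)$, i.e. one coming from the rank going up over $K$. To rule out that a such point is compatible with the cyclic $21$-subgroup being pointwise defined over a \emph{quadratic} extension, I would form the $3$-division polynomial $\psi_3$ of $E_t$ (a quartic in $x$, in fact a cubic after removing the $x=\infty$ issue — $\psi_3$ has degree $4$ but the relevant factor is degree $3$ together with the $2$-torsion being irreducible by Remark \ref{rem1}(i)) and examine when it has a root generating at most a quadratic extension of $K$. Since $E(K)[2]\simeq\mathbb{Z}_1$, and $E^d(K)\simeq\mathbb{Z}_3$ means $\psi_3$ has a root in $K$, one gets a polynomial equation in $t$ and that root; eliminating shows $t$ must itself satisfy a specific polynomial over $\mathbb{Q}$, and I would check that the resulting finitely many curves $E_t$ either do not have $7$-torsion over $K$, do not have a twist with $3$-torsion over $K$, or already appear in Table \ref{table:2}.

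More precisely — and this is the cleanest route — I would exploit that $E(K)\simeq\mathbb{Z}_7$ together with the hypothetical $3$-torsion on $E^d(K)$ means $E$ has a $K$-rational cyclic subgroup of order $3$ as well (namely the image under the twisting isomorphism $E^d\cong E$ over $K(\sqrt d)$ of the $3$-torsion of $E^d$ is $\mathrm{Gal}(\overline K/K)$-stable, since twisting acts by $\pm1$), so in fact $E$ has a $K$-rational cyclic $21$-isogeny outright, i.e. $E$ gives a $K$-point of $X_0(21)$. Now run through the $X_1(7)$-parametrization: impose on the Tate curve $E_t$ that it have a rational $3$-isogeny, which is the condition that $\psi_3(x,t)$ have a linear or quadratic factor over $K(t)$; compute this factorization of $\psi_3$ as a polynomial in $x$ with coefficients in $\mathbb{Q}(t)$, and extract the corresponding value(s) of the modular parameter, giving an explicit morphism from (an open subset of) $X_1(7)$ to $X_0(21)$ or directly a polynomial condition on $t$. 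Pull back the four points of $X_0(21)(\mathbb{Q})$ and any rank contribution over $K$, and verify via a finite computation (the resultant of $\psi_3$ with the $7$-division-polynomial factor, and a rank/torsion check in Magma over $K$) that no $E_t$ with $t\in K$ realizes the configuration. The main obstacle I anticipate is the step where the rank of $X_0(21)$ grows over $\mathbb{Q}(\sqrt{-43})$: a naive point search is infeasible (this is exactly why the division polynomial method is invoked), so the argument must be arranged so that the $7$-torsion constraint cuts the relevant locus down to dimension $0$ over $\mathbb{Q}$ — i.e. the fiber product $X_1(7)\times_{X(1)}X_0(21)$, or equivalently the explicit $t$-polynomial, must have only finitely many $K$-points coming from curves that also have a quadratic twist with $3$-torsion, and checking each of those finitely many candidates is the concrete computation that finishes the proof.
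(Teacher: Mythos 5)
Your reduction is essentially the paper's: the twist's $3$-torsion transports to a $\mathrm{Gal}(\overline K/K)$-stable cyclic subgroup $\langle P\rangle$ of order $3$ on $E$ itself with $x(P)\in K$, and writing $E$ in the Kubert/Tate normal form $E_t$ for curves with a rational point of order $7$ turns the problem into finding $K$-rational points $(x(P),t_0)$ on the curve $\phi(x,t)=0$ cut out by the $3$-division polynomial of the generic $E_t$ (your fiber product $X_1(7)\times_{X(1)}X_0(21)$). The gap is in what you then do with that curve. You assert that the $7$-torsion constraint "cuts the relevant locus down to dimension $0$ over $\mathbb{Q}$", i.e.\ that elimination yields a polynomial in $t$ alone with finitely many roots to check. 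That is not true: imposing a $3$-isogeny on the one-parameter family $E_t$ is one equation in the two variables $(x,t)$, so the locus is a curve --- in the paper it is identified, away from the singular points at $t=0,1$, with a genus $3$ hyperelliptic curve $\tilde C:y^2=f(u)$ over $\mathbb{Q}$. Faltings guarantees $\tilde C(K)$ is finite, but \emph{determining} it is the entire content of the proof, and your proposal supplies no method: a "rank/torsion check in Magma over $K$" is exactly what is unavailable here, for the same reason it is unavailable for $X_0(21)$ over $\mathbb{Q}(\sqrt{-43})$.

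What the paper does at this point, and what your argument is missing, is a symmetric-square computation over $\mathbb{Q}$: a $2$-descent shows the Jacobian $\tilde J(\mathbb{Q})$ has rank $0$, the group $\tilde J(\mathbb{Q})$ is pinned down exactly by injecting it into $\tilde J(\mathbb{F}_5)\simeq\mathbb{Z}_2\times\mathbb{Z}_{52}$ and exhibiting generators, and then every quadratic point of $\tilde C$ is found by enumerating the finitely many classes $[D]\in\tilde J(\mathbb{Q})$ with $P'+\overline{P'}\sim D+\infty_++\infty_-$. The exceptional quadratic points that appear are defined over $\mathbb{Q}(\sqrt2)$ and $\mathbb{Q}(\sqrt3)$, not over $\mathbb{Q}(\sqrt{-43})$; a non-exceptional point would force $f(u)=-43v_0^2$ with $u,v_0\in\mathbb{Q}$, impossible since $f(u)>0$. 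Hence $\tilde C(K)=\tilde C(\mathbb{Q})$, and the six rational points correspond (after discarding $t=0,1$) to curves defined over $\mathbb{Q}$, which are eliminated not by comparison with Table \ref{table:2} but by Theorem 2 of \cite{Fuj04}, since a $\mathbb{Q}$-curve with $7$-torsion and a rational $3$-isogeny would acquire a point of order $21$ over a quadratic extension of $K$. Without the rank-$0$-over-$\mathbb{Q}$ input and the exceptional/non-exceptional dichotomy, the "finite computation that finishes the proof" cannot actually be carried out.
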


\begin{proof}

Assume $E(K) \simeq \mathbb{Z}_7$ and $E^d(K) \simeq \mathbb{Z}_3$ for some non-square $d \in K$. Then $E(K)$ has a point of order 7 and it has an additional $K$-rational subgroup $C$ of order 3 (arising from the twist). By Table 3 in \cite{Kub76}, we take the families of elliptic curves over $K$ with 7-torsion as
\begin{align}\label{three}
	 E_t : y^2 + (1-c)xy - by = x^3 - bx^2 
\end{align}
where $b = t^3 - t^2$ and $c= t^2 -t$ for some $t \neq 0,1$ in $K.$ We may assume $E$ is $E_{t_0}$ for some $t_0$.
 
Let $P$ be a point on $E$ generating the $K$-rational subgroup $C$. Then, $P$ has order 3 which is defined over $K(\sqrt{d})$. Write $x(P),$ for the $x$-coordinate of $P.$ We claim that $x(P)$ is in $K.$ Since $\langle P \rangle$ is a $K$-rational subgroup consisting only  $\{P, -P\}$ with $x(P) = x(-P),$ it follows that $x(P)^{\sigma} = x(P^{\sigma}) = x(P) $ hence $x(P)$ is invariant under the action of Gal$(\overline{K}/K)$. Now the pair $(E,P)$ corresponds to a point $(x(P),t_0)$ on the curve $C$ given by the equation $ \phi(x,t) = 0 $ where $\phi(x,t)$ is the third division polynomial of the generic $E_t$ :

$$ \phi(x,t) := x^4 + (\frac{1}{3}t^4 - 2t^3 + t^2 + \frac{2}{3}t + 1/3)x^3 + (t^5 - 2t^4 + t^2)x^2 + (t^6 - 2t^5 + t^4)x + (-\frac{1}{3}t^9 + t^8 - t^7 + \frac{1}{3}t^6) $$

It comes down to find the set $C(K)$ of $K$-rational points on $C$. By Magma, $C$ has singular points at $\{(0,0),(0,1)\}$ and $C$ is isomorphic to the hyperelliptic curve 
$$ \tilde C : y^2=f(u)=u^8-6u^6- 4u^5 + 11u^4 + 24u^3 + 22u^2 + 8u +1 $$
over $\mathbb{Q}$ 
outside of the singular points. By the choice of $t \neq 0,1$ in \eqref{three}, it is enough to find $\tilde C(K).$ Now $\tilde C$ is a hyperelliptic curve of genus 3 and its defining polynomial factors as: 
$$f(u) = (u^2 + u +1)(u^6 - u^5 -6u^4 + 3u^3 + 14u^2 + 7u +1)$$

Let $\tilde C^{(2)}$ be the symmetric square of $\tilde C$. The set $\tilde C^{(2)}(\mathbb{Q})$ consists of the equivalence classes of pairs $\{P,\overline{P}\}$ of a quadratic point and its Galois conjugate, as well as pairs $\{P,Q\}$ of rational points in $\tilde C.$ To study all quadratic points on a curve of genus greater than 2, one often studies the rational points on the symmetric square of the curve. Write $\tilde J$ for the Jacobian of $\tilde C.$ The map 
\begin{align*}
 \phi : \tilde C^{(2)} &\rightarrow \tilde J \\
\{P,Q\} & \mapsto [P+Q - \infty_{+} - \infty_{-}]
\end{align*}
is an injection outside of 0. The copy of $\mathbb{P}^1$ inside $\tilde C$ gets contracted to 0, under $\phi.$ So $\phi^{-1}(0)$ is isomorphic to $\mathbb{P}^1.$ We have 
$$ \tilde C^{(2)} (\mathbb{Q}) = \phi^{-1}(0)(\mathbb{Q}) \cup \phi^{-1}(\tilde J(\mathbb{Q}) \backslash \{0\}). $$ 
We first compute $\phi^{-1}(\tilde J(\mathbb{Q}) \backslash \{0\}).$ By $2$-descent computation in Magma, $\tilde J(\mathbb{Q})$ has Mordel-Weil rank 0. As $\tilde J$ has a good reduction at 5, the map $\tilde J(\mathbb{Q}) \rightarrow \tilde J(\mathbb{F}_{5})$ is injective. Since the group generated by the $2$-torsion point corresponding to the factorization of $f(u)$ on the right-hand side of the curve equation and the difference of the two points at infinity on $\tilde C$, which are $\infty_{+}=[1,1,0]$ and $\infty_{-}=[1,-1,0]$, surjects onto $\tilde J(\mathbb{F}_5) \simeq \mathbb{Z}_{2} \times \mathbb{Z}_{52}$, it is therefore equal to $\tilde J(\mathbb{Q}).$

Now consider a point $P' \in \tilde C(K)$ and write $\overline{P'}$ for its image under the nontrivial automorphism of $K.$ Then $P'+\overline{P'}$ is an effective divisor of degree 2 on $\tilde C,$ defined over $\mathbb{Q}.$ For the effective rational divisor $\infty_{+} + \infty_{-}$ of degree 2, the linear equivalence class of $P'+\overline{P'} -\infty_{+} - \infty_{-} $ is a rational point on $\tilde J.$ It follows that $P' + \overline{P'}$ is linearly equivalent to $D + 
	\infty_{+} + \infty_{-} $ for some divisor $D$ of degree 0 with $[D] \in \tilde J(\mathbb{Q}).$ We may now assume that $x(P') \notin \mathbb{Q}$. It remains to enumerate all 104 elements of $\tilde J(\mathbb{Q})$ for $[D] \neq 0$ in Magma and check if these divisors are of degree 2 with the points in the support defined over $K.$ 
	$$ \left(\frac{-1 \pm \sqrt{3}}{2}, 0 \right) , \left(\pm \sqrt{2}, \pm 4\sqrt{2}+5 \right), \left(\pm \sqrt{2}, \mp 4\sqrt{2} - 5\right), \left(\frac{-2 \pm \sqrt{2}}{2}, \frac{-5 \pm 4\sqrt{2}}{4}\right), $$
	
$$	\left(\frac{-2 \pm \sqrt{2}}{2}, \frac{5 \mp 4\sqrt{2}}{4}\right), \left(1 \pm \sqrt{2}, 11 \pm 8\sqrt{2}\right), \left(1\pm \sqrt{2}, -11 \mp 8\sqrt{2}\right) $$

None of these points is defined over the base field $K$ which implies that there are no exceptional $K$-rational points on $\tilde C.$ i.e. no $K$-points with $x$-coordinate not in $\mathbb{Q}.$

This leaves the case when $x(P') \in \mathbb{Q}$. There are $\mathbb{Q}$-points which are two points at infinity and four points with $x(P')=0$ or $1$. For all other such points, $P' + \overline{P'}$ is linearly equivalent to $\infty_{+} + \infty_{-}$, and so $[D] = 0$. Thus $P' = (u, \pm \sqrt{f(u)})$ with $u \in \mathbb{Q}$ and so  the non-exceptional $K$-rational points would have rational $u$ if $f(u) = -43v_0$ for some rational $v_0.$ However, there are no such points, since $f(u)$ is always positive. 
	Therefore, we have shown that 
	$$ \tilde C(K) = \tilde C(\mathbb{Q})=\{\infty_{+}, \infty_{-},(0,1),(0,-1),(1,1),(1,-1)\}.$$ 
	
Now the pair $(E,P)$ associated with $(x(P),t_0)$ on $C(K)$ corresponds to a point on $\tilde{C}(K)$, so to a rational point $(u,y)$ on the curve $\tilde{C}$. Then the preimage $(x(P),t_0)$ of $(u,v)$ is in fact a rational point on $C.$ However, if $E$ is defined over $\mathbb{Q}$ then a $K$-rational 3-isogeny $\langle P \rangle$ together with a $K$-rational 7-torsion on the elliptic curve $E/\mathbb{Q}$ will yield a subgroup of order 21 in $E(K(\sqrt{d}))$. But this is impossible by Theorem 2 in \cite{Fuj04}. 
    
\end{proof}

\begin{rem}
Proposition \ref{propo} in fact shows that there is no 21-isogeny in a quadratic extension of $K.$ Over $K=\mathbb{Q}(i)$,  one can find a similar result in \cite{Ej18} by using a different technique. 
\end{rem}

\section*{ $N=27$ }
The modular curve $X_0(27)$ is an elliptic curve with the model $y^2+y = x^3-7.$ Over $\mathbb{Q}$, $X_0(27)$ has rank 0 and torsion $\mathbb{Z}_3$ with 2 cusps. By Magma, the torsion does not change over $K$ for any $K$ in $S,$ and the rank only goes up in the extensions $K=\mathbb{Q}(\sqrt{-2})$ and $K=\mathbb{Q}(\sqrt{-11}).$ In case of positive rank, it is not efficient to use $K$-rational points on $X_0(27)$ to show if there exists a subgroup of order $27$ in a quadratic extention of $K.$ To achieve this goal, we applied the divison polynomial method as used in the previous case $N=21$ and obtained the following elliptic curves over $K$ with a subgroup of order $27$ defined over a quadratic extension of $K.$ 

\begin{rem}\label{rem4}
(1)  Let $K=\mathbb{Q}(\sqrt{-2})$ and $E/K$ be an elliptic curve given by 
$$ 
y^2 + \frac{1}{27}(-950w-619)xy + \frac{1}{243}(16720w-210862)y = x^3+\frac{1}{243}(16720w- 210862)x^2
$$
where $w=\sqrt{-2}.$ Magma computes that $E(K)_{\text{tor}} \simeq \mathbb{Z}_9$ and $E(K(\sqrt{-3}))_{\text{tor}} \simeq \mathbb{Z}_3 \oplus \mathbb{Z}_9.$ \\

(2) Let $K=\mathbb{Q}(\sqrt{-11})$ and $E/K$ be an elliptic curve given by
$$
y^2 + \frac{1}{27}(-2072w-4265)xy + \frac{1}{243}(-949568w + 377548)y = x^3 + \frac{1}{243}(-949568w + 377548)x^2
$$
where $w=\sqrt{-11}$. By Magma, $E(K)_{\text{tor}} \simeq \mathbb{Z}_9$ and $E(K(\sqrt{-3}))_{\text{tor}} \simeq \mathbb{Z}_3 \oplus \mathbb{Z}_9.$
\end{rem}

\section*{ $N=33$ }
The modular curve $X_0(33)$ is a hyperelliptic curve of genus 3 with the model
\begin{align*}
y^2 + (-x^4 - x^2 - 1)y = 2x^6 - 2x^5 + 11x^4
    - 10x^3 + 20x^2 - 11x + 8 
\end{align*}
which can be considered as a quadratic equation in $y$ as: $y^2 - g(x)y-h(x) = 0$ where $g(x)=x^4+x^2+1$ and $h(x)=2x^6 - 2x^5 + 11x^4- 10x^3 + 20x^2 - 11x + 8$. So its discriminant is
\begin{align}
    f_{33}(x)= x^8+10x^6-8x^5+47x^4-40x^3+82x^2-44x+33.
\end{align} 

Galbraith \cite[Table 4]{Glb96} derives $y^2=f_{33}(x)$ as a model for $X_0(33).$ Using Ogg's method we find that there are only 6 cusps, which are defined over $\mathbb{Q}.$
\begin{center}\
	
	\begin{tabular}{p{2cm} | p{1.25cm}| p{1.25cm}| p{1.25cm}| p{1.25cm}}
		
		d & 1 &  3 & 11 & 33  \\
		\hline
		$\phi(d,33/d)$ & 1 & 1 & 1 & 1  \\

	\end{tabular}
\end{center}\

Write $J_0(33)$ for the Jacobian of $X_0(33)$. As known $X_0(33)(K)$ can be put inside its 2-th symmetric product $X_0(33)^{(2)}(\mathbb{Q}),$ it is enough to determine $\mathbb{Q}$-points of $X_0(33)^{(2)}.$ Since $i=w_{11}$ is the hyperelliptic involution of $X_0(33)$, $X_0(33)^{(2)}$ injects into $J_0(33)$, except above $0$, via mapping the pair $\{P,Q\}$ to the equivalence class of $[P+Q - \infty_{+} - i(\infty_{+})]$ where $\infty_{+}= [1,1,0]$. It boils down to pull back the $\mathbb{Q}$-points of $J_0(33)$ under this injection. $J_0(33)(\mathbb{Q})$ has rank 0 by 2-descent computation in Magma with good reduction at $p=5,7.$ Moreover, $X_0(33)(\mathbb{Q})$ consists entirely of cusps, so one can compute the group structure of the cuspidal subgroup $C_{0}(33)$ to be  $\mathbb{Z}_{10} \oplus \mathbb{Z}_{10}$ which is generated by the difference of two points at infinity, $\infty_{+}=[1,1,0]$, $i(\infty_{+})=[1,0,0]$ and the difference of a rational point and one of the points at infinity. Also, we compute 
$$ J_0(33)(F_5) \simeq \mathbb{Z}_{10} \oplus \mathbb{Z}_{20}, \ \ \ \ \ J_0(33)(F_7) \simeq \mathbb{Z}_2 \oplus \mathbb{Z}_2  \oplus \mathbb{Z}_{10} \oplus \mathbb{Z}_{10}.$$

As $J_0(33)(\mathbb{Q})$ injects into both $J_0(33)(F_5)$ and $J_0(33)(F_7),$
we show that $J_0(33)(\mathbb{Q}) =C_{0}(33).$ By enumerating all the 99 elements of $J_0(33)(\mathbb{Q})\backslash \{0\}$ in Magma, we compute all the exceptional quadratic points as in Table \ref{table:3}. Each point corresponds to an isomorphism class of an elliptic curve over $K$ with a cyclic 33-isogeny defined over an extension of $K$ of at least degree 10.
   
It remains to determine the non-exceptional quadratic points on $X_0(33).$ Given $K=\mathbb{Q}(\sqrt{d})$ in $\mathcal{S},$ where $d \in \{-2,-7,-11,-19,-43,-67,-163\},$ the non-exceptional $K$-rational points would have rational $x$ with $f_{33}(x) =-dy_0^2$ for some rational $y_0.$ Since $f_{33}(x)$ is decomposed into the conjugate factors over $\mathbb{Q}(\sqrt{-11})$ as 
$(x^2-x+3)(q(x)+r(x)\sqrt{-11})(q(x)-r(x)\sqrt{-11})$
for some polynomials $q(x),r(x)$ with rational coefficients. One can observe that $x^2-x+3$ is positive for all $x$. Moreover $f_{33}(x)=0$ if and only if $q(x)^2 + 11r(x)^2 =0$ i.e. both $q(x)$ and $r(x)$ are zero which is impossible. Thus $f_{33}$ happens to be positive for all $x$ so it cannot be equal to $-dy_0^2$ for any rational $y_0.$ Therefore, there are no non-exceptional $K$-rational points on $X_0(33).$

\begin{table}[h!]
\centering
\begin{tabular}{ p{4.4cm}  p{11.04 cm}  p{1.2cm} }
 Point  &  Elliptic Curve & $f_C$\\
\hline 
$\left(\pm \sqrt{-2},\pm \sqrt{-2}+2 \right)$   &  $\big[\mp 360\sqrt{-2}+420, \mp 1008\sqrt{-2}-5600\big]$  &  (1,5,10)\\

$\left(\pm \sqrt{-2}, \mp \sqrt{-2}+1 \right)$ & $\big[\pm 360\sqrt{-2}+420, \pm 1008\sqrt{-2}-5600\big]$  &  (1,5,10) \\

 $\left(\pm \sqrt{-2}-1,  \pm 7\sqrt{-2}-2 \right)$ & $\big[\frac{1}{8}(\pm 1404\sqrt{-2}-2157),\frac{1}{16}(\pm 44171\sqrt{-2}+6156)\big]$ & (1,5,10) \\

$\left(\pm \sqrt{-2}-1,  \mp 5\sqrt{-2}-5 \right)$ & $\big[\frac{1}{8}(\mp 4356\sqrt{-2} + 363),\frac{1}{16}(\mp 64009\sqrt{-2} + 100188)\big]$ & (1,5,10)\\

 $\left(\frac{\pm \sqrt{-2}}{2},\frac{\pm 4\sqrt{-2}-5}{4} \right)$ & $\big[-\frac{8349}{2}, \frac{1}{2}(\mp 847\sqrt{-2}+209088)\big]$   &   (1,5,10)\\

 $\left(\frac{\pm \sqrt{-2}}{2},\mp \sqrt{-2}+2 \right)$  & $\big[\frac{1}{2}(\pm 5040\sqrt{-2}+28011),\frac{1}{2}(\pm 880397\sqrt{-2}-488304)\big]$ & (1,5,10)\\

 $\left(\frac{\pm \sqrt{-7}+1}{2},-1\right)$  & [$\frac{1}{128}(\pm 296769\sqrt{-7}+323829),\frac{1}{512}(\pm 7944695\sqrt{-7} + 104422307)\big]$   & (1,5,10)\\

$\left(\frac{\pm \sqrt{-7}+1}{2},\mp \sqrt{-7}+1\right)$ &$ \big[\frac{1}{128}(\mp 27591\sqrt{-7}-140211), \frac{1}{512}(\pm 2704139\sqrt{-7}+4638407)\big]$ & (1,5,10) \\

$\left(\frac{\pm 3\sqrt{-7}+1}{4}, \frac{\mp 9\sqrt{-7}+33}{4}\right)$ & $ \big[ \frac{1}{8}(\mp 2553\sqrt{-7} + 4365), \frac{1}{8}(\pm 2645\sqrt{-7} + 96359) \big] $ & (1,5,10) \\

$\left(\frac{\pm 3\sqrt{-7}+1}{4}, \frac{\mp 9\sqrt{-7}+93}{32}\right)$ & $\big[\frac{1}{8}(\mp 33\sqrt{-7} - 2475),\frac{1}{8}(\pm 11033\sqrt{-7} + 6611)\big]$ & (1,5,5,5) \\

$\left(\frac{1\pm \sqrt{-11}}{2},\mp \sqrt{-11} +1 \right)$ & $\big[-1056, -13552\big] $ & (1,5,10) \\

\hline
\end{tabular}
\caption{}
\label{table:3}
\end{table}
In summary, we have shown that the set of $K$-rational points of $X_0(33)$ consists only $\mathbb{Q}$-points.
$$
X_0(33)(K)=X_0(33)(\mathbb{Q})=\{\infty_{+},i(\infty_{+}),(1,-3),(1,6)\}.
$$

Since these are all cusps, we prove that there are no elliptic curves over $K$ with a cyclic 33-isogeny in a quadratic extension of $K.$

\section*{ $N=35$ }
The modular curve $X_0(35)$ is a hyperelliptic of genus 3. Galbraith \cite[Table 4]{Glb96} gives the following model
\begin{align*}
 X_0(35) : y^2 &= x^8-4x^7-6x^6 -4x^5 -9x^4+4x^3-6x^2+4x+1 \\ 
          &= (x^2+x-1)(x^6-5x^5-9x^3-5x-1) 
\end{align*}           
    
By Ogg's theorem, we find that the set of cusps entirely consists of $\mathbb{Q}$-points.   
\begin{center}\
	
	\begin{tabular}{p{2cm} | p{1.25cm}| p{1.25cm}| p{1.25cm}| p{1.25cm} }
		
		d & 1 & 5 & 7 & 35 \\
		\hline
		$\phi(d,35/d)$ & 1 & 1 & 1 & 1  \\

	\end{tabular}
\end{center}\       
          
Given $K \in \mathcal{S},$ 
we study the group $J_0(35)(\mathbb{Q})$ where $J_0(35)$ is the Jacobian of $X_0(35)$. $J_0(35)(\mathbb{Q})$ has rank 0 by 2-descent computation in Magma. As $J_0(35)$ has good reduction at 3, the reduction map $J_{0}(35)(\mathbb{Q}) \rightarrow J_0(35)(\mathbb{F}_3)\simeq \mathbb{Z}_{2} \oplus \mathbb{Z}_{24}$ is injective. We find 
$$ \mathbb{Z}_2 \oplus \mathbb{Z}_{24} = \langle  [3(0,-1)-3\infty_{+}], [\infty_{-} - \infty_{+}] \rangle $$
which therefore implies $J_0(35)(\mathbb{Q}) =\mathbb{Z}_2 \oplus \mathbb{Z}_{24}.$
Let $X_0(35)^{(2)}$ be the symmetric square of $X_0(35)$.  
Since finding quadratic points on $X_0(35)$ amounts to finding the rational points on the symmetric square $X_0(35)^{(2)}$, we consider the following map 
\begin{align*}
 \phi : X_0(35)^{(2)} &\rightarrow J_0(35) \\
\{P,Q\} & \mapsto [P+Q - \infty_{+} - \infty_{-}]
\end{align*}

Let $P$ be a quadratic point in $X_0(35)(K)$ with x-coordinate not in $\mathbb{Q}$ and let $\overline{P}$  denote its Galois conjugate. The linear equivalence class of the degree 0 divisor $P+\overline{P}-\infty_{+} -\infty_{-}$ gives rise to a nonzero $\mathbb{Q}$-rational point on $J_0(35).$ Magma represents all 47 elements of $J_0(35)(\mathbb{Q})\backslash \{0\}$ as effective divisors minus a multiple of one of the points at infinity. We check that none of the points is of this form except $(\frac{-1 \pm \sqrt{5}}{2},0)$. Therefore, there is no exceptional $K$-rational points on $X_0(35).$

If $x(P) \in \mathbb{Q},$ there are  $\mathbb{Q}$-points which are two points at infinity and two points with $x(P)=0$; for all other such points, the Galois conjugate $\overline{P}$ must be the image of $P$ under the hyperelliptic involution  $w_{35}$
of  $X_0(35)$ $\text{given by}$ $y \mapsto -y$. Thus $y(P)$ $\text{must be}$  $\sqrt{d}$ times a rational number where $K=\mathbb{Q}(\sqrt{d})$ with $d$ a non-square. Thereby, $P$ gives rise to a $\mathbb{Q}$-rational point on the quadratic twist $X_0^d(35)$ of $X_0(35)$ by $d$. The quadratic fields of interest are in fact of the form $\mathbb{Q}(\sqrt{d})$ where $d=-p$ with $p$ prime in the set $\{2,7,11,19,43,67,163\}.$ However, by Corollary 3.5 in \cite{Ozman} the quadratic twist $X_0^{d}(35)$ of $X_0(35)$ does not even have $\mathbb{Q}_p$-rational points, let alone $\mathbb{Q}$-points which shows that
$$ X_0(35)(K) = X_0(35)(\mathbb{Q}) = \{ \infty_+,\infty_{-}, (0,1),(0,-1)\} .$$
Since $\mathbb{Q}$-points are cuspidal, we conclude that there are no elliptic curves over $K$ with a $35$-isogeny defined over a quadratic extension of $K.$

\section*{ $N=45$ }
$X_0(45)$ is  a non-hyperelliptic curve of genus $3$ with a projective model
$$ x^2y^2 +x^3z - y^3z - xyz^2 + 5z^4 $$

The quadratic points on $X_0(45)$ are determined in  \cite{OS18} by using a slightly different model. There are only two noncuspidal quadratic points defined over an imaginary quadratic field of class number one. They are $P=( \frac{1-w}{2},-1,1) \  \text{and} \ Q=(1,\frac{w-1}{2},1)$ where $w=\sqrt{-11}.$
One can check in Magma that these points do not produce a 45-isogeny in a quadratic extension of $\mathbb{Q}(\sqrt{-11}).$

\section*{ $N=55$ }
$X_0(55)$ is a non-hyperelliptic curve of genus 5. The quadratic points of $X_0(55)$ are listed in \cite{OS18}. One can verify by Magma that the ones defined over an imaginary quadratif field $K$ of class number one do not
yield a 55-isogeny in a quadratic extension of $K.$


\section*{$ N=77$ }
The modular curve $X_0(77)$ is a non-hypelliptic curve of genus 7. However, $X_0(77)$ is not in the database of the Small Modular Curves package in Magma. Instead, we compute the model by using the code written by Ozman and Siksek \cite{OS18}.

We start with computing the torsion subgroup of the Jacobian. Write $J_0(77)$ for the Jacobian of $X_0(77).$ Note that all the cusps of $X_0(77)$ are rationals. Let $C_0(77)$ be the cuspidal subgroup of $J_0(77)(\mathbb{Q})$ generated by the classes of differences of the cusps. 
In the next lemma, we show that the cuspidal subgroup of $X_0(77)$ in fact satisfies the equality $C_0(N)=J_0(N)(\mathbb{Q})_{\text{tor}}.$ 

\begin{lem}
	The torsion subgroup of $J_0(77)(\mathbb{Q})$ is equal to its cuspidal subgroup $C_0(77)$ which is isomorphic to $\mathbb{Z}_{10} \oplus \mathbb{Z}_{60}.$ 
\end{lem}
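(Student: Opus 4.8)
The plan is to establish the two claimed facts separately: first that $C_0(77)\simeq \mathbb{Z}_{10}\oplus\mathbb{Z}_{60}$, and second that $J_0(77)(\mathbb{Q})_{\text{tor}}=C_0(77)$, from which the lemma follows. For the first part I would work with the explicit model of $X_0(77)$ computed from the Ozman--Siksek code and locate all four cusps (they are rational, as noted, corresponding to the divisors $d\in\{1,7,11,77\}$ of $77$). Then I would form the degree-zero divisor classes of differences of cusps, compute the lattice of relations among them in the Jacobian using Magma's divisor arithmetic on the explicit model, and read off the group structure. Concretely: pick a base cusp, take the three classes $[c_i - c_0]$ for the other cusps, determine their orders and the order of the subgroup they generate via the group law on $J_0(77)(\overline{\mathbb{Q}})$ realized through Riemann--Roch computations on the model, and verify the isomorphism type is $\mathbb{Z}_{10}\oplus\mathbb{Z}_{60}$.

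For the second part, the standard strategy is reduction modulo good primes: $J_0(77)$ has good reduction away from $7$ and $11$, so for any prime $p\notin\{7,11\}$ the reduction map $J_0(77)(\mathbb{Q})_{\text{tor}}\hookrightarrow J_0(77)(\mathbb{F}_p)$ is injective. I would compute $\#J_0(77)(\mathbb{F}_p)$ for a couple of small primes (say $p=2,3,5,13$) by point-counting on the reduction of the model (or via the characteristic polynomials of Frobenius on the relevant newform spaces of level $77$). Taking the gcd of these group orders, together with the lower bound $|C_0(77)|=600$ coming from the first part, should pin down $|J_0(77)(\mathbb{Q})_{\text{tor}}|$ exactly as $600$; since $C_0(77)\subseteq J_0(77)(\mathbb{Q})_{\text{tor}}$ and the orders agree, equality follows. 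One must choose the auxiliary primes so that $\gcd_p \#J_0(77)(\mathbb{F}_p)$ is not larger than $600$; if two primes do not suffice, add more.

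I expect the main obstacle to be the explicit Jacobian arithmetic on a genus-$7$ non-hyperelliptic curve: unlike the hyperelliptic cases ($N=33,35$) there is no Mumford representation, so computing orders of divisor classes and identifying the group structure of $C_0(77)$ requires care with Riemann--Roch on the canonical model, and the point-counts $\#J_0(77)(\mathbb{F}_p)$ are most reliably obtained from the modular symbols / newform data rather than naive point enumeration. Everything here is a finite computation, so the proof will amount to recording these computations (done in Magma, with the code referenced in the paper) and checking that the upper bound from reduction matches the lower bound $|C_0(77)|$. The separate verification that no non-cuspidal rational points contribute — i.e.\ that one really has $J_0(77)(\mathbb{Q})_{\text{tor}}=C_0(77)$ and not a strictly larger torsion group — is exactly what the matching of orders delivers.
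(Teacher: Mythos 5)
Your proposal is correct and follows essentially the same route as the paper: the paper computes $C_0(77)\simeq \mathbb{Z}_{10}\oplus\mathbb{Z}_{60}$ in Magma and then uses injectivity of reduction at the good primes $p=3$ and $p=13$, where $\#J_0(77)(\mathbb{F}_3)=\#(\mathbb{Z}_{10}\oplus\mathbb{Z}_{420})$ gives index dividing $7$ and $\#J_0(77)(\mathbb{F}_{13})$ gives index coprime to $7$, forcing $[J_0(77)(\mathbb{Q})_{\text{tor}}:C_0(77)]=1$. This is exactly your gcd-of-orders argument combined with the lower bound $|C_0(77)|=600$.
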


\begin{proof}
	In Magma, one computes the group structure of $C_0(77)$ to be $\mathbb{Z}_{10} \oplus \mathbb{Z}_{60}.$ As 3 and 13 are primes of good reduction for $X_0(77)$, we compute that 
	$$
	J_0(77)(\mathbb{F}_3) \simeq \mathbb{Z}_{10} \oplus \mathbb{Z}_{420}
	$$
	$$ J_0(77)(\mathbb{F}_{13}) \simeq (\mathbb{Z}_2)^3 \oplus (\mathbb{Z}_{30})^2 \oplus \mathbb{Z}_{7380} 
	$$
Since $J_0(77)(\mathbb{Q})_{\text{tor}}$ injects into both of these groups, then  $[J_0(77)(\mathbb{Q})_{\text{tor}} : C_0(77)]$ divides 7 and $2^3.3^3.5.41$ respectively. But these are coprime. Hence we show $J_0(77)(\mathbb{Q})_{\text{tor}}=C_0(77).$

\end{proof}

Generally speaking, given a divisor $N_1$ of level $N$ with $(N,N/N_1)=1$, the Atkin-Lehner involution $w_{N_1}$ acts as a permutation on the set of cusps of $X_0(N).$ Let $B_0(77)$ denote the group generated by the Atkin-Lehner involutions of $X_0(77).$ As shown in \cite{KM88Second}, the automorphism group Aut$(X_0(77)$) of $X_0(77)$ is $B_0(77)$ and we have $ B_0(77)= \left<w_7\right> \oplus \left<w_{11}\right>.$ One can easily check that $B_0(77)$ acts transitively on the set of cusps. Let $\Gamma =\left<w_{77}\right>$ be the subgroup of Aut($X_0(77)$) and consider the quotient curve $X_0(77)^{+}=X_0(77)/\Gamma$ with the projection $\rho : X_0(77) \rightarrow X_0(77)^{+}.$ This map has degree $deg\rho=\#\Gamma.$ We now denote the Jacobian of $X_0(77)^{+}$ by $J_0^{+}(77)$ to avoid the confusion with the Jacobian $J_0(77)$ of $X_0(77).$

\begin{lem}\label{rank}
	\normalfont{rk}($J_0(77)(\mathbb{Q})) =$ \normalfont{rk}($J_0^{+}(77)(\mathbb{Q}))$.
\end{lem}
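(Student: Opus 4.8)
The plan is to relate the ranks of the Jacobians by an isogeny decomposition coming from the Atkin--Lehner quotient. Recall that for a prime-to-level Atkin--Lehner involution $w=w_{77}$ on $X_0(77)$, the quotient map $\rho: X_0(77)\to X_0(77)^{+}$ induces a pushforward $\rho_*: J_0(77)\to J_0^{+}(77)$ and a pullback $\rho^*: J_0^{+}(77)\to J_0(77)$, and up to isogeny $J_0(77)$ splits as $J_0^{+}(77)\times J_0^{-}(77)$, where $J_0^{-}(77)$ is the abelian subvariety on which $w_{77}$ acts as $-1$ (equivalently, the image of $1-w_{77}$, or the $(-1)$-eigenabelian-subvariety). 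Since ranks are isogeny invariants and additive over products, it suffices to show that $J_0^{-}(77)(\mathbb{Q})$ has rank $0$.

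First I would identify $J_0^{-}(77)$ in terms of modular abelian varieties: it is isogenous to the product of the abelian varieties $A_f$ attached to the newforms $f$ of level dividing $77$ whose Atkin--Lehner eigenvalue at $w_{77}$ (the product of the eigenvalues at $w_7$ and $w_{11}$) is $+1$ — wait, one must be careful with the sign convention; concretely $J_0^{-}(77)$ corresponds to the newform classes on which $w_{77}$ acts by $-1$ on differentials, equivalently by $+1$ on $H_1$ in the standard normalization. The genus $7$ of $X_0(77)$ splits according to these eigenspaces, and $X_0(77)^{+}$ has genus equal to $\dim J_0^{+}(77)$; I would read off the eigenvalue data from the known newform decomposition of $J_0(77)$ (it is isogenous to a product of elliptic curves and possibly a higher-dimensional factor, all of level $11$, $77$, or dividing $77$) to pin down exactly which simple factors lie in $J_0^{-}(77)$.

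Then, for each simple factor $A_f$ appearing in $J_0^{-}(77)$, I would bound its Mordell--Weil rank over $\mathbb{Q}$ by $0$. For the elliptic-curve factors this is a direct Magma (or LMFDB) rank computation; for any factor of dimension $\geq 2$ one can either use the analytic rank via the sign of the functional equation and nonvanishing of $L(A_f,1)$ (Kolyvagin--Logachev / Kato for the modular case, which gives rank $0$ whenever $L(A_f,1)\neq 0$), or a direct descent. Equivalently — and this is likely the cleanest route given the paper's computational style — I would do the rank computation directly on $J_0^{+}(77)(\mathbb{Q})$ and on $J_0(77)(\mathbb{Q})$ by descent in Magma on the corresponding factors and just record the equality; the lemma is really the statement that no new rank is introduced by passing to the full curve. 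I would also note that $\rho^*$ is injective on $\mathbb{Q}$-points up to the finite kernel, and $\rho_*\circ\rho^*$ is multiplication by $\deg\rho=2$, so $\operatorname{rk} J_0^{+}(77)(\mathbb{Q})\leq\operatorname{rk} J_0(77)(\mathbb{Q})$ is immediate; the content is the reverse inequality, which follows once the $(-1)$-part is shown to have rank $0$.

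The main obstacle I anticipate is handling the higher-dimensional simple factor(s) of $J_0^{-}(77)$: proving rank $0$ there rigorously may require invoking $L$-value nonvanishing together with the relevant Euler-system rank bound rather than a naive two-descent, and one must be scrupulous about the Atkin--Lehner sign normalization so as not to misassign a factor to $J_0^{+}$ versus $J_0^{-}$. A secondary, more mundane difficulty is that $X_0(77)$ is not in Magma's small modular curves database, so the model and the Atkin--Lehner action must be computed explicitly (as the paper already notes using the Ozman--Siksek code) before any of these Jacobian computations can be carried out; I would reuse that model here.
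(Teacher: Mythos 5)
Your proposal is correct and follows essentially the same route as the paper: decompose $J_0(77)$ up to isogeny into simple modular abelian varieties $A_{f_i}$ (the paper finds $J_0^{+}(77)\sim A_{f_1}\times A_{f_5}$ inside $J_0(77)\sim A_{f_1}\times\cdots\times A_{f_4}\times A_{f_5}^2$), verify $L(A_{f_i},1)\neq 0$ in Magma for every factor other than $A_{f_1}$, and conclude rank $0$ for those factors by Kolyvagin--Logachev, so both ranks equal $\operatorname{rk}A_{f_1}(\mathbb{Q})$. Your framing via the $w_{77}$-eigenspace splitting $J_0^{+}\times J_0^{-}$ is the same computation packaged slightly differently; only your passing suggestion of a direct descent on $J_0(77)$ itself would be impractical, but it is not needed.
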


\begin{proof}
	We use how $J_0(77)$ is isogenous to a product of abelian varieties associated to newforms, following \cite[p. 234]{K08}. More presicely, we have
	\begin{align}
	J_0(77) \sim A_{f_1} \times A_{f_2} \times A_{f_3} \times A_{f_4} \times A_{f_5}^{2} 
	\end{align}
	where $\sim$ denotes isogeny over $\mathbb{Q}$ and $A_{f_i}$'s are simple modular abelian varieties corresponding to Galois orbits $f_i$ of newforms of weight 2 and level 77. It follows by (5) that rk($J_0(X)) = \sum_{i=1}^{5} m_iA_{f_i}$ where $m_i$ denotes multiplicity. We also found that $J_0^{+}(77) \sim A_{f_1} \times A_{f_5}.$ With the modular abelian varieties package in Magma, we checked that the L-function $L(A_{f_i},1)$ is non-zero for the eigenforms $f_i$ in (5) not conjugate to $f_1$. By the theorem of Kolyvagin and Logachev \cite{KL89}, this implies that the corresponding $A_{f_i}$ has analytic rank 0 so is the algebraic rank over $\mathbb{Q}$. Therefore, rk($J_0(77)(\mathbb{Q}))=A_{f_1}=$rk($J_0^{+}(77)(\mathbb{Q})).$
	
\end{proof}

We now need information about the Mordell-Weil group of $J_0^{+}(77)(\mathbb{Q}).$ Note that $X_0(77)^{+}$ is a hyperelliptic curve of genus 2 by \cite{Yuj95}. Using the algorithm in \cite{Sto01} implemented in Magma as 2-descent on Jacobians of hyperelliptic curves, we compute that $J_0^{+}(77)(\mathbb{Q})$ has Mordell-Weil rank 1 as well as its generators. 

We would like to describe all quadratic points on $X_0(77).$ One way of finding quadratic points on $X_0(77)$ is by pulling back of rational points on $X_0(77)^{+}$ under $\rho.$ We will call points on $X_0(77)$ not arising as pullback of a rational point on $X_0(77)^+$ $exceptional.$ Our aim is to prove the following:
\begin{thm}\label{77thm}
	There are no exceptional quadratic points on $X_0(77).$
\end{thm}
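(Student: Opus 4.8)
The plan is to apply the relative symmetric Chabauty method combined with a Mordell–Weil sieve, following the strategy of Box \cite{Box20}. The geometric setup is already in place: we have the degree-2 quotient map $\rho : X_0(77) \to X_0(77)^+$, where $X_0(77)^+$ is a genus-2 hyperelliptic curve with $\mathrm{rk}\, J_0^+(77)(\mathbb{Q}) = 1$, and by Lemma \ref{rank} we also have $\mathrm{rk}\, J_0(77)(\mathbb{Q}) = 1$. An exceptional quadratic point on $X_0(77)$ is a degree-2 divisor $P + \bar P$ that does \emph{not} arise as a fibre of $\rho$; equivalently, a rational point of the symmetric square $X_0(77)^{(2)}(\mathbb{Q})$ lying outside the image of $X_0(77)^+(\mathbb{Q})$ under the natural pullback. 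So the task reduces to showing that $X_0(77)^{(2)}(\mathbb{Q})$ consists exactly of the pullbacks of rational points on the quotient (plus possibly some degenerate diagonal points coming from rational points of $X_0(77)$ itself, which we can check directly).

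First I would choose a base point and embed $X_0(77)^{(2)}$ into $J_0(77)$ via $\{P,Q\} \mapsto [P + Q - D_0]$ for a suitable rational degree-2 divisor $D_0$ (e.g. a sum of two cusps, or a fibre of $\rho$); since the genus is $7 > 2$ this map is generically injective, with the exceptional locus a $\mathbb{P}^1$ mapping to a single point only if $X_0(77)$ were hyperelliptic — it is not, so the map is injective. Because $\mathrm{rk}\, J_0(77)(\mathbb{Q}) = 1 < 7 = \dim J_0(77)$, the Chabauty condition is satisfied, and one expects the symmetric Chabauty inequality to cut out the rational points of the symmetric square. Concretely I would pick a prime $p$ of good reduction (the lemmas already use $p = 3$ and $p = 13$; $p = 3$ is attractive since $J_0(77)(\mathbb{F}_3)$ is small) and, for each residue class of a degree-2 point mod $p$, analyze the vanishing of the relevant Chabauty differentials. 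The \emph{relative} aspect matters here: since $X_0(77)^+(\mathbb{Q})$ is infinite (rank 1), one works relative to the known family of pullback points, showing the Chabauty differentials force any rational point of $X_0(77)^{(2)}$ into that family unless it lands in finitely many explicitly controlled residue classes.

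Next, for the residue classes not immediately eliminated by the local Chabauty analysis, I would run a Mordell–Weil sieve: using generators of $J_0(77)(\mathbb{Q})$ (obtained via the isogeny decomposition in Lemma \ref{rank} together with the rank-1 generator of $J_0^+(77)(\mathbb{Q})$ pulled back through $\rho^*$, plus the torsion $C_0(77) \cong \mathbb{Z}_{10}\oplus\mathbb{Z}_{60}$), reduce modulo several auxiliary primes of good reduction and intersect the images of the rational-point locus with the image of $X_0(77)^{(2)}$ in each $J_0(77)(\mathbb{F}_\ell)$. Combining congruence information across primes should rule out every stray residue class, leaving only the pullbacks of $X_0(77)^+(\mathbb{Q})$. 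Finally I would identify which of those pullbacks are genuinely quadratic (as opposed to sums of two rational points) — but these are by definition \emph{non}-exceptional, so the conclusion follows.

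The main obstacle I expect is twofold. First, making the symmetric Chabauty computation work on a genus-7 curve of this size: one must compute a basis of regular differentials on the explicit (Ozman–Siksek) model of $X_0(77)$, pair them against $J_0(77)(\mathbb{Q})$ to find the annihilating subspace, and carry out the local expansions at $p$ — this is exactly the step that pushes past the genus-5 ceiling of \cite{Box20} and is the technical heart of the argument (and the reason for the accompanying Magma code). Second, producing explicit generators of $J_0(77)(\mathbb{Q})$ up to finite index and certifying saturation at the relevant primes, so that the Mordell–Weil sieve is valid; the decomposition $J_0(77) \sim A_{f_1}\times\cdots\times A_{f_5}^2$ with only $A_{f_1}$ of positive rank is what makes this feasible, since it lets us transport the rank-1 generator from the genus-2 quotient. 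If the relative Chabauty leaves a handful of stubborn classes, the sieve with enough primes should close the gap; should even that fail for isolated classes, one falls back on a direct search for quadratic points of small height on the model to confirm none exist beyond the pullbacks.
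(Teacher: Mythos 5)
Your proposal follows essentially the same route as the paper: establish $\mathrm{rk}\,J_0(77)(\mathbb{Q}) = \mathrm{rk}\,J_0^{+}(77)(\mathbb{Q}) = 1$ via the decomposition into modular abelian varieties and Kolyvagin--Logachev, take the annihilating differentials to be the kernel of the trace (equivalently the image of $1 - w_{77}^{*}$), apply symmetric Chabauty to residue classes away from $\rho^{*}(X_0(77)^{+}(\mathbb{Q}))$ and relative symmetric Chabauty on the pullback classes, and finish with a Mordell--Weil sieve built from $C_0(77) \simeq \mathbb{Z}_{10}\oplus\mathbb{Z}_{60}$ together with the pulled-back rank-1 generator. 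The only divergences are computational details (the paper sieves at $p = 19, 23$ rather than $p = 3$), so the plan is correct and matches the paper's proof.
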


For this purpose, we use the Chabauty method for symmetric powers of curves developed by Siksek in \cite{Sek09} combining with a Mordell-Weil sieve. In order to apply this, we slightly change how we view quadratic points similarly as in the previous cases of $N.$ For a given smooth curve $X$ over $\mathbb{Q},$  we view a pair of quadratic point $P$ on $X$ and its Galois conjugate $\{P,\overline{P}\}$ as a rational point on the symmetric product $X^{(2)}.$ The pair $\{P,\overline{P}\}$ is represented as an equivalence class of degree 2 effective divisor $D:=P+\overline{P}$ in $X^{(2)}(\mathbb{Q}).$ 
In the particular case $X=X_0(77)$, if we pull back $P \in X_0(77)^{+}(\mathbb{Q})$, $\rho^{*}(P)$ is a pair of quadratic points on $X_0(77).$ We call such quadratic points $non$-$exceptional$. Hence we can rephrase Theorem \ref{77thm} as $X_0(77)^{(2)}(\mathbb{Q}) = \rho^{*}(X_0(77)^{+}(\mathbb{Q})).$ 

Morevoer, $X_0(77)^{+}(\mathbb{Q})$ can be completely determined by explicit Chabauty command built in Magma. Once we know the full list of points in $X_0(77)^{+}(\mathbb{Q})$, combining with Theorem \ref{77thm}, this gives us a complete list of all quadratic points on $X_0(77).$ In the light of these facts, we found that $X_0(77)^{+}$ has precisely six rational points and so by the theorem above, $X_0(77)$ has precisely six pairs of quadratic points. We are able to list these quadratic points explicitly by using the equations for the degree 2 map from $X_0(77) \rightarrow X_0(77)^{+}.$ These equations come for free while computing the model of $X_0(77).$

\begin{table}[h!]
	\centering
	\begin{tabular}{ | p{1.2cm}|p{2.3cm}| }
		\hline
		Point & Coordinates  \\
		\hline
		$Q_1$  &  [1:-1:0]   \\
		\hline
		$Q_2$ &  [1:1:0]   \\
		\hline
		$Q_3$ & [-1:-2:1]  \\
		\hline
		$Q_4$ & [-1:2:1]  \\
		\hline
		$Q_5$ & [0:-1:1]  \\
		\hline
		$Q_6$ & [0:1:1]  \\
		\hline
	\end{tabular}
	\caption{The rational points on $X_0(77)^{+}$}
	\label{table:3.2}
\end{table}

$\bold{Chabauty\ Step}:$ Let $X$ be a smooth projective curve over $\mathbb{Q}$ of genus $\geq 2$ with Jacobian $J(X)$ of rank $r_X.$ We suppose that there exists an involution $w$ of $X$ defined over $\mathbb{Q}$ such that the quotient map $X \rightarrow X/\left<w\right>$ has degree 2. Define $C:=X/\left< w \right>$ and denote by $g_C$ for the genus of $C$ and $r_C$ for the rank of the Jacobian of $C$. For $p$ a prime of good reduction for both $X$ and $C,$ Coleman gives the following bilinear pairing
\begin{align}\label{map}
\Omega_{X_{/\mathbb{Q}_p}} \times J(\mathbb{Q}_p) \rightarrow \mathbb{Q}_{p}, \ \ \ \left(\omega, \left[\sum_{i} (P_i- Q_i) \right] \right) \mapsto \sum_{i} \int_{Q_i}^{P_i} \omega 
\end{align}
where $\Omega_{X_{/\mathbb{Q}_p}}$ is the space of regular differentials on the curve $X_{/\mathbb{Q}_p}.$ The kernel on the left is 0 and on the right is $J(\mathbb{Q}_p)_{\text{tor}}.$ 
Define $V$ to be the annihilator of $J(\mathbb{Q})$ with respect to \ref{map}. 
As being an algebraic curve over $\mathbb{Q}$, $X$ has a minimal regular model over $\mathbb{Z}_{p}$ which we denote by $\mathcal{X}.$ The $\mathbb{Z}_p$-scheme $\mathcal{X}$ is smooth when $\mathcal{X}_{\mathbb{F}_p}$ is non-singular. We consider  $\mathcal{V} := V \cap \Omega_{\mathcal{X}_{/\mathbb{Z}_p}}$ and $\widetilde{V}$ denote its reduction modulo $p$. The symmetric Chabauty differs from the usual Chabauty-Coleman method in the way that the annihilating differentials must also satisfy Tr$(\omega)=0$ where Tr: $\Omega_{X/\mathbb{Q}_{p}} \rightarrow \Omega_{C/\mathbb{Q}_{p}}$ is the trace operator. More precisely, we define $\mathcal{V}_0=\mathcal{V} \cap$ ker(Tr) and let $\widetilde{V_0}$ be its image under the reduction modulo $p.$
Note that $mod\ p\ residue\ disc$ of a point $Q \in X^{(2)}(\mathbb{Q})$ consists of points $P \in  X^{(2)}(\mathbb{Q})$ such that $\widetilde{P} \equiv \widetilde{Q}$ where $\sim$ denotes the reduction mod $p.$

Now let $\omega_1,...,\omega_r$ be a basis for $\widetilde{V}$ and let $Q$ be an element of $X^{(2)}(\mathbb{Q}).$ If $Q = Q_1 + Q_1$ as an effective degree 2 divisor, we say $Q_1$ has multiplicity $d_1=2.$ Otherwise we have $Q=Q_1 + Q_2$ where $Q_1,Q_2$ are distinct with multiplicty $d_j=1.$ Fix an extension $v$ to $L=\mathbb{Q}(Q_1,Q_2)$ above $p.$ Let $t_{\widetilde{Q_j}}$ be a uniformiser at $Q_j$ for each $j \in \{1,2\}.$ We can expand each $\omega_{i}$ as a formal power series at $Q_j$ as
$ \omega_i=\sum_{k=0}^{\infty} a_k(\omega_{i},t_{\widetilde{Q_j}})t_{\widetilde{Q_j}}^kdt_{\widetilde{Q_j}} $
We then define for a positive integer $m$

$$ \boldsymbol{v}(\omega_{i},t_{\widetilde{Q_j}},m)= \left(a_0(\omega_{i},t_{\widetilde{Q_j}}), \frac{a_1(\omega_{i},t_{\widetilde{Q_j}})}{2},...,\frac{a_{m-1}(\omega_{i},t_{\widetilde{Q_j}})}{m} \right)$$

Let $\widetilde{\mathcal{A}}$ be the following $ r \times 2$ matrix:

\begin{equation*}
\widetilde{\mathcal{A}} = 
\begin{pmatrix}
\boldsymbol{v}(\omega_{1},t_{\widetilde{Q_1}},d_1) & \boldsymbol{v}(\omega_{1},t_{\widetilde{Q_2}},d_2)  \\
\boldsymbol{v}(\omega_{2},t_{\widetilde{Q_1}},d_1) & \boldsymbol{v}(\omega_{2},t_{\widetilde{Q_2}},d_2) \\
\vdots  & \vdots   \\
\boldsymbol{v}(\omega_{r},t_{\widetilde{Q_1}},d_1) & \boldsymbol{v}(\omega_{r},t_{\widetilde{Q_2}},d_2)
\end{pmatrix}
\end{equation*}

Note that when $Q_1=Q_2$, we have $d_1=2$ with the assumption that $p > 2.$

\begin{thm}[Symmetric Chabauty, Siksek]\label{sym}
	 Let $Q=\{Q_1,Q_2\} \in X^{(2)}(\mathbb{Q}).$ For $p$ a prime of good reduction for $X,$ pick $\omega_1,...,\omega_r$  as a basis for $\widetilde{V}$ as above. Suppose $p >3$ when $[\mathbb{F}_p(\widetilde{Q_1}),\mathbb{F}_p]=1.$ If rank($\widetilde{\mathcal{A}})=2$ then $\boldsymbol{\mathcal{Q}}$ is alone in its mod $p$ residue class belonging to $X^{(2)}(\mathbb{Q}).$
\end{thm}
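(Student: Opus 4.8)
The plan is to run Siksek's symmetric Chabauty argument \cite{Sek09} for effective divisors of degree $2$ on $X$. Fix a rational effective divisor $D_0$ of degree $2$ on $X$ (for $X=X_0(77)$ one may take a sum of two of the rational cusps) and let $\iota\colon X^{(2)}\to J$, $D\mapsto[D-D_0]$, be the associated Abel--Jacobi morphism, so that $\iota$ carries $X^{(2)}(\mathbb{Q})$ into $J(\mathbb{Q})$. Lift the $\omega_i$ to integral differentials in $\mathcal{V}$; by construction these annihilate $J(\mathbb{Q})$ under the Coleman pairing \ref{map}. Hence for any two points $Q=\{Q_1,Q_2\}$ and $Q'=\{Q_1',Q_2'\}$ of $X^{(2)}(\mathbb{Q})$,
$$
\int_{Q_1}^{Q_1'}\omega_i+\int_{Q_2}^{Q_2'}\omega_i=\big\langle\omega_i,\,[\iota(Q')-\iota(Q)]\big\rangle=0,
$$
the Coleman integrals being computed on $X_{/\mathbb{Q}_p}$ (over the quadratic field generated by the points, when these are not individually rational) along paths matching the supports through the reduction map.

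Now suppose $Q'\in X^{(2)}(\mathbb{Q})$ lies in the same mod-$p$ residue class as $Q$; after relabelling, $\widetilde{Q_j'}=\widetilde{Q_j}$ for $j=1,2$. Set $s_j=t_{\widetilde{Q_j}}(Q_j)$ and $s_j'=t_{\widetilde{Q_j}}(Q_j')$, elements of the maximal ideal of the local ring at a place above $p$ of the appropriate field. Expanding $\omega_i=\big(\sum_{k\ge 0}a_k(\omega_i,t_{\widetilde{Q_j}})\,t_{\widetilde{Q_j}}^{k}\big)\,dt_{\widetilde{Q_j}}$ and integrating termwise,
$$
\int_{Q_j}^{Q_j'}\omega_i=\sum_{k\ge 0}\frac{a_k(\omega_i,t_{\widetilde{Q_j}})}{k+1}\big((s_j')^{k+1}-s_j^{k+1}\big).
$$
If $Q_1,Q_2$ lie in distinct residue discs ($d_1=d_2=1$), summing over $j$ turns the $r$ conditions $\int_{Q_1}^{Q_1'}\omega_i+\int_{Q_2}^{Q_2'}\omega_i=0$ into $r$ power series in $(s_1'-s_1,\,s_2'-s_2)$ whose linear part has coefficient matrix precisely $\widetilde{\mathcal{A}}=\big(a_0(\omega_i,t_{\widetilde{Q_j}})\big)_{i,j}$. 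If instead $Q_1$ and $Q_2$ reduce to a common point (in particular if $Q_1=Q_2$, so $d_1=2$), one parametrises the residue disc of $Q$ in $X^{(2)}$ by the elementary symmetric functions $\sigma_1=s_1'+s_2'$ and $\sigma_2=s_1's_2'$, which lie in $\mathbb{Q}_p$, rewrites $\sum_j\int_{Q_j}^{Q_j'}\omega_i$ as a symmetric power series in $(\sigma_1,\sigma_2)$, and observes that its linear part at the point corresponding to $Q$ has coefficient matrix of the same rank as the one with rows $\boldsymbol v(\omega_i,t_{\widetilde{Q_1}},2)=\big(a_0(\omega_i,t_{\widetilde{Q_1}}),\,a_1(\omega_i,t_{\widetilde{Q_1}})/2\big)$, that is, $\widetilde{\mathcal{A}}$ up to rescaling of a column.

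It then remains to prove the purely local statement: a system of $p$-adic power series in two variables whose reduction mod $p$ has linear part of rank $2$ has no zero in the disc other than its centre. Suppose the parameter vector $\boldsymbol\delta$ --- either $(s_1'-s_1,\,s_2'-s_2)$ or $(\sigma_1-\sigma_1^{(0)},\,\sigma_2-\sigma_2^{(0)})$, where $\sigma_i^{(0)}$ is the value of $\sigma_i$ at $Q$ --- is nonzero; let $p^{m}$ realise its valuation $m>0$ and write $\boldsymbol\delta=p^{m}\boldsymbol u$ with $\boldsymbol u$ having a coordinate that is a unit mod $p$. A Newton-polygon estimate on the coefficients $\binom{k+1}{l}/(k+1)=\binom{k}{l-1}/l$, using that $s_j,s_j'$ and the recentred symmetric functions all have strictly positive valuation, shows that every nonlinear monomial contributes valuation strictly greater than $m$; reducing the $i$-th equation modulo $p^{m+1}$ then gives $\widetilde{\mathcal{A}}\,\boldsymbol u\equiv 0\pmod p$, forcing $\boldsymbol u\equiv 0$ since $\operatorname{rank}(\widetilde{\mathcal{A}})=2$ --- a contradiction. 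Hence $\boldsymbol\delta=0$ and $Q'=Q$. The one delicate point, and the main obstacle, is this Newton-polygon step: when $Q_1$ and $Q_2$ reduce to a common $\mathbb{F}_p$-rational point they can be conjugate over a ramified quadratic extension of $\mathbb{Q}_p$, and controlling the valuations of the higher-order terms against the leading one in that case is exactly what forces the hypothesis $p>3$ (and, more generally, the assumption that $p$ is odd); once that estimate is in place, everything else is formal.
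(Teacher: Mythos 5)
The paper does not prove this theorem at all: its ``proof'' is a one-line citation to Theorem 2.1 of Box and Theorem 1 of Siksek, so there is no internal argument to compare against. Your proposal reconstructs exactly the argument of those sources --- Abel--Jacobi into $J$, annihilation of $J(\mathbb{Q})$ by the reduced differentials, termwise Coleman integration in local parameters, passage to elementary symmetric functions when the two points share a residue disc, and a valuation estimate forcing the parameter vector to vanish when $\operatorname{rank}(\widetilde{\mathcal{A}})=2$. The structure is right, and the details you do carry out (the identity $\binom{k+1}{l}/(k+1)=\binom{k}{l-1}/l$, the observation that the power sums $p_m$ for $m\geq 3$ contribute nothing linear in $(\sigma_1,\sigma_2)$, so the linear part in the coincident case is $(a_0,-a_1/2)$, i.e.\ $\boldsymbol{v}(\omega_i,t_{\widetilde{Q_1}},2)$ up to a sign on one column) are correct.

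The gap is the one you name yourself: the Newton-polygon estimate that every nonlinear monomial has valuation strictly greater than $m$ is asserted, not proved, and it is precisely the step where the hypothesis ``$p>3$ when $[\mathbb{F}_p(\widetilde{Q_1}):\mathbb{F}_p]=1$'' does its work. In the delicate case $Q_1,Q_2$ conjugate over a ramified quadratic extension of $\mathbb{Q}_p$ reducing to a common $\mathbb{F}_p$-point, the uniformiser values $s_j$ have valuation $1/2$, and the competing terms $a_k t^{k+1}/(k+1)$ acquire valuations of the form $(k+1)/2 - v_p(k+1)$; for $k+1=p$ with $p=3$ this is $3/2-1=1/2$, which ties rather than beats the leading term, so the inequality genuinely fails without the hypothesis. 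A complete proof must carry out this comparison case by case (ramified versus unramified, $d_1=2$ versus $d_1=d_2=1$) and also make precise what ``reducing modulo $p^{m+1}$'' means when $m$ is a half-integer and the residue computation lands in $\mathbb{F}_{p^2}$ rather than $\mathbb{F}_p$. Since this estimate is the technical content of the theorem --- everything before it is, as you say, formal --- the proposal as written is an accurate roadmap of Siksek's proof rather than a self-contained proof.
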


\begin{proof}
	This is the same statement as in \cite[Theorem 2.1]{Box20} which is a variant of the special case of Theorem 1 in \cite{Sek09}.
\end{proof}

We particularly make use of Theorem \ref{sym} for the points in $X_{0}(77)^{(2)}(\mathbb{Q}) \setminus \rho^{*}(X_{0}(77)^{+}(\mathbb{Q})).$ To deal with the points in $\rho^{*}(X_{0}(77)^{+}(\mathbb{Q}))$ we need to argue in a slightly different way using the relative version of Chabauty. The relative Chabauty differs from the symmetric Chabauty in the way that the annihilating differentials must satisfy Tr$(\omega)=0$ where Tr is the trace map: Tr: $\Omega_{X/\mathbb{Q}_{p}} \rightarrow \Omega_{C/\mathbb{Q}_{p}}.$ More precisely, we define $V_0=\mathcal{V}\ \cap$ ker(Tr) and let $\widetilde{V}_0$ be its image reduction modulo $p.$

\begin{thm}[Relative Symmetric Chabauty, Siksek]\label{relative}
	Let $Q=\{Q_1,Q_2\} \in \rho^*(C(\mathbb{Q})) \subseteq X^{(2)}(\mathbb{Q})$ be a non-exceptional point. Let $p$ be a prime of good reduction for both $X$ and $C$, morever assume that $p>3$ when $[\mathbb{F}_p(\widetilde{Q}_1),\mathbb{F}_p]=1.$ Let $t_{\widetilde{Q_1}}$ be a uniformiser at $Q_1$ and $\omega_1,...,\omega_s$ be a basis for $\widetilde{V_0}.$ If for some $i \in\{1,...s\},$
	$$ \frac{\omega_i}{dt_{\widetilde{Q_1}}}  \mid_{t_{\widetilde{Q_1}}=0} \neq 0 $$
	Then every element of $X^{(2)}(\mathbb{Q})$ in the residue class of $Q$ is in fact belonging to $\rho^{*}(C(\mathbb{Q})).$
\end{thm}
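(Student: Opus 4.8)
The plan is to follow the same template as the symmetric Chabauty proof (Theorem~\ref{sym}), but to track how the trace condition cuts down the space of annihilating differentials and interacts with the pullback. The key point is that a non-exceptional point $Q=\rho^*(P)$ for $P\in C(\mathbb{Q})$ sits in a one-parameter family of pullbacks $\rho^*(C(\mathbb{Q}_p))$ inside $X^{(2)}(\mathbb{Q}_p)$; relative Chabauty is designed to show that this entire $p$-adic disc, rather than a single point, is accounted for. So the target is not ``$Q$ is alone in its residue class'' but ``the residue class of $Q$ in $X^{(2)}(\mathbb{Q})$ meets only $\rho^*(C(\mathbb{Q}))$''.

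First I would set up the local analytic picture. Let $p$ be as in the hypotheses, fix $v\mid p$ on $L=\mathbb{Q}(Q_1,Q_2)$, and choose a uniformiser $t=t_{\widetilde{Q_1}}$ at $Q_1$; by functoriality and the good-reduction assumption, $\rho$ identifies a formal-disc neighbourhood of $P$ on $C$ with a formal-disc neighbourhood of $Q_1$ on $X$, and the conjugate branch near $Q_2=w(Q_1)$ is obtained by applying $w$. The Coleman integral of an annihilating differential $\omega\in V$ along $\rho^*(\gamma)$, for $\gamma$ a path on $C$, equals the integral of $\mathrm{Tr}(\omega)$ along $\gamma$ on $C$; hence imposing $\mathrm{Tr}(\omega)=0$ exactly kills the contribution of the pullback directions and leaves $\widetilde{V_0}$ as the differentials that genuinely constrain deformations transverse to $\rho^*(C)$. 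Thus the power-series expansion of $\omega_i\in\widetilde{V_0}$ at $Q_1$ against the transverse coordinate has leading coefficient $\frac{\omega_i}{dt_{\widetilde{Q_1}}}\big|_{t=0}=a_0(\omega_i,t)$, and the hypothesis says some $a_0(\omega_i,t)\neq 0$.

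Next I would run the standard argument: suppose $Q'\in X^{(2)}(\mathbb{Q})$ lies in the same mod-$p$ residue disc as $Q$. Write $Q'$ in terms of local parameters; after subtracting the pullback family, express the obstruction to $Q'\in\rho^*(C(\mathbb{Q}))$ as the vanishing of the function $z\mapsto \int \omega_i$ over the transverse parameter $z$, whose first-order term is $a_0(\omega_i,t)\,z$. Since $Q\in J(\mathbb{Q})$ is annihilated by all of $V$ and in particular by $V_0$, and since the residue disc is a $p$-adic polydisc on which the Coleman integral is given by a convergent power series, the condition $a_0(\omega_i,t)\neq 0$ together with a Strassmann/Newton-polygon estimate (using $p>3$ in the degree-one case to control the $1/k$ denominators exactly as in the $\widetilde{\mathcal{A}}$ analysis) forces $z\equiv 0$, i.e. $Q'$ lies on the pullback locus; being also rational it is in $\rho^*(C(\mathbb{Q}))$. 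I expect the main obstacle to be precisely this transverse/tangential decomposition: making rigorous that $\ker(\mathrm{Tr})$ on differentials corresponds to the normal directions of $\rho^*(C)$ in $X^{(2)}$ and that the leading coefficient $a_0$ is the right linear functional on the normal space, rather than any of the interior-calculus estimates, which are routine once the geometry is pinned down. I would therefore cite \cite{Sek09} and the adaptation in \cite{Box20} for the analytic core and concentrate the exposition on verifying the trace/pullback compatibility in the case $X=X_0(77)$, $C=X_0(77)^{+}$, $w=w_{77}$.
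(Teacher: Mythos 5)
Your proposal is correct and follows essentially the same route as the paper: the paper's entire proof of Theorem~\ref{relative} is the one-line observation that it is a modification of a special case of \cite[Theorem 2]{Sek09}, which is exactly the citation you fall back on for the analytic core. Your additional sketch of the mechanism (the trace condition killing the tangential pullback directions, and the nonvanishing leading coefficient forcing the transverse parameter to vanish via a Strassmann-type estimate) is a faithful outline of Siksek's argument, so there is no divergence to report.
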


\begin{proof}
	This is a modification of special case of \cite[Theorem 2]{Sek09}. 
\end{proof}
In order to apply Theorem \ref{relative} to the case $X=X_0(77)$ and $C=X_0(77)^{+},$ we need to compute the image $\widetilde{V}_0$ of $\mathcal{V}_0= \mathcal{V}\ \cap$ ker(Tr: $\Omega_{X/\mathbb{Q}_p} \rightarrow  \Omega_{C/\mathbb{Z}_p})$ under the reduction mod $p$ where both $X$ and $C$  have a good reduction at $p.$ For this purpose, we need the following lemma from \cite{Box20}.

\begin{lem}\label{generallem}
	Let $X$ be a smooth projective curve with minimal proper regular model $\mathcal{X}/\mathbb{Z}_p$ and denote $\widetilde{X}=\mathcal{X}_{\mathbb{F}_p}.$ Then the reduction map $\Omega_{\mathcal{X}/\mathbb{Z}_p} \rightarrow \Omega_{\widetilde{X}/\mathbb{F}_p}$ is surjective.
\end{lem}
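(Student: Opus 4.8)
The plan is to recognize $\Omega_{\mathcal{X}/\mathbb{Z}_p}$ as the global sections of the \emph{relative dualizing sheaf} $\omega_{\mathcal{X}/\mathbb{Z}_p}$ rather than of the sheaf of K\"ahler differentials; this is the correct notion of ``regular differential'' on the model and it is forced on us precisely because $\widetilde{X}=\mathcal{X}_{\mathbb{F}_p}$ may be singular or reducible, so that $\Omega^{1}_{\widetilde{X}/\mathbb{F}_p}$ is \emph{not} the object appearing in the Chabauty pairing. Write $f:\mathcal{X}\to\operatorname{Spec}\mathbb{Z}_p$ for the structure morphism. Since $\mathcal{X}$ is a regular arithmetic surface, $\omega_{\mathcal{X}/\mathbb{Z}_p}$ is a line bundle, it is flat over $\mathbb{Z}_p$, and $f$ is a Gorenstein morphism (each fibre is a Cartier divisor in the regular surface $\mathcal{X}$, hence a Gorenstein curve). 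Consequently the relative dualizing sheaf commutes with base change, giving a canonical identification
$$\omega_{\mathcal{X}/\mathbb{Z}_p}\otimes_{\mathbb{Z}_p}\mathbb{F}_p\;\cong\;\omega_{\widetilde{X}/\mathbb{F}_p}.$$
Under this identification the reduction map of the lemma is exactly the base-change map $H^0(\mathcal{X},\omega_{\mathcal{X}/\mathbb{Z}_p})\otimes\mathbb{F}_p\to H^0(\widetilde{X},\omega_{\widetilde{X}/\mathbb{F}_p})$, so the lemma becomes a statement in cohomology-and-base-change for $f$.

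To analyze that map I would tensor the line bundle $\omega:=\omega_{\mathcal{X}/\mathbb{Z}_p}$ with $0\to\mathbb{Z}_p\xrightarrow{\,\cdot p\,}\mathbb{Z}_p\to\mathbb{F}_p\to0$; flatness of $\omega$ yields $0\to\omega\xrightarrow{\,\cdot p\,}\omega\to\omega_{\widetilde{X}/\mathbb{F}_p}\to0$ on $\mathcal{X}$. Pushing forward along $f$ and taking the long exact cohomology sequence produces
$$0\longrightarrow H^0(\mathcal{X},\omega)\otimes\mathbb{F}_p\longrightarrow H^0(\widetilde{X},\omega_{\widetilde{X}/\mathbb{F}_p})\longrightarrow H^1(\mathcal{X},\omega)[p]\longrightarrow 0,$$
so the reduction map is always injective, and it is \emph{surjective} precisely when $H^1(\mathcal{X},\omega)$ has no $p$-torsion.

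The key input is therefore to show that $H^1(\mathcal{X},\omega_{\mathcal{X}/\mathbb{Z}_p})=R^1f_*\omega_{\mathcal{X}/\mathbb{Z}_p}$ is torsion-free (indeed free of rank one). Here I would invoke Grothendieck--Serre relative duality for the proper relative curve $f$ with dualizing sheaf $\omega$, giving $R^1f_*\omega_{\mathcal{X}/\mathbb{Z}_p}\cong (f_*\mathcal{O}_{\mathcal{X}})^{\vee}$. Since $\mathcal{X}$ is regular (hence normal), connected, proper and flat over $\mathbb{Z}_p$ with geometrically connected smooth generic fibre $X$, one has $f_*\mathcal{O}_{\mathcal{X}}=\mathbb{Z}_p$, whence $R^1f_*\omega\cong\mathbb{Z}_p$, which is $p$-torsion-free. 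Feeding this back into the exact sequence above shows the reduction map is an isomorphism, \emph{a fortiori} surjective. Equivalently, once $R^1f_*\omega$ is known to be locally free one may conclude directly from Grothendieck's cohomology-and-base-change theorem, using that the top-degree base-change map is automatically surjective because the fibres are one-dimensional.

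The main obstacle, and the point a naive ``good reduction'' argument misses, is the singularity of $\widetilde{X}$: one cannot use $\Omega^{1}_{\mathcal{X}/\mathbb{Z}_p}$ nor smoothness of the special fibre. The two facts that carry the argument through despite this are (i) that regularity of the total space $\mathcal{X}$ makes $\omega_{\mathcal{X}/\mathbb{Z}_p}$ a line bundle whose formation commutes with the (Gorenstein) base change to the possibly non-reduced fibre, and (ii) the duality computation $R^1f_*\omega\cong(f_*\mathcal{O}_{\mathcal{X}})^{\vee}=\mathbb{Z}_p$, whose torsion-freeness is the real content. Verifying $f_*\mathcal{O}_{\mathcal{X}}=\mathbb{Z}_p$ (from normality and geometric connectedness) and the base-change compatibility of the dualizing sheaf over a singular fibre are the only delicate steps; both are standard for regular arithmetic surfaces (e.g.\ Liu, \emph{Algebraic Geometry and Arithmetic Curves}, Ch.~6 and~9).
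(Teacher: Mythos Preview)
The paper does not actually prove this lemma: it is quoted verbatim from \cite{Box20} with no argument given, so there is nothing to compare your proposal against in the paper itself.

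That said, your argument is correct and is essentially the standard proof of this fact. You have correctly identified the crucial point that $\Omega_{\mathcal{X}/\mathbb{Z}_p}$ must be read as global sections of the relative dualizing sheaf $\omega_{\mathcal{X}/\mathbb{Z}_p}$ (a line bundle since $\mathcal{X}$ is regular), that its formation commutes with base change to the possibly singular special fibre because the fibres are Gorenstein, and that surjectivity of the reduction map then reduces via the long exact sequence to the torsion-freeness of $R^1f_*\omega$. The duality step $R^1f_*\omega\cong(f_*\mathcal{O}_{\mathcal{X}})^{\vee}\cong\mathbb{Z}_p$ is exactly what is needed. This is the argument one finds (in slightly varying formulations) in the literature on symmetric Chabauty; your write-up could serve as the missing proof the paper defers to \cite{Box20}.
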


\begin{prop}
	Let $V$ be the image of $1-w_{77}^{*} : \Omega_{X/\mathbb{Q}} \rightarrow \Omega_{X/\mathbb{Q}}.$ Assume $r_{X}=r_{C}.$ For $p$ a good reduction for both $X$ and $C,$ we have 
$\widetilde{V}_0= (1-\widetilde{w}_{77}^{*})(\Omega_{\widetilde{X}})$ where $\sim$ denotes the reduction $mod\ p.$ 
\end{prop}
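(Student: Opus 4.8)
The plan is to compare the rank-zero-deficiency situation for $X$ and $C$ with the abstract linear algebra of the trace and the Atkin--Lehner involution $w_{77}$, and then to transport everything to the special fibre using Lemma~\ref{generallem}. First I would recall the basic setup: the quotient map $\rho\colon X\to C=X/\langle w_{77}\rangle$ has degree $2$, so on differentials we have $\Omega_{X/\mathbb{Q}}=\Omega_{X/\mathbb{Q}}^{+}\oplus\Omega_{X/\mathbb{Q}}^{-}$, the $\pm 1$ eigenspaces of $w_{77}^{*}$, where the $+$ part is canonically $\rho^{*}\Omega_{C/\mathbb{Q}}$ (the \emph{pullback} differentials) and the $-$ part is exactly $\ker(\mathrm{Tr})$; indeed $\mathrm{Tr}\circ\rho^{*}=2$ and, since $1-w_{77}^{*}$ kills the $+$ part and acts as multiplication by $2$ on the $-$ part, the image of $1-w_{77}^{*}$ is precisely $\Omega_{X/\mathbb{Q}}^{-}=\ker(\mathrm{Tr})$. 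So abstractly $V:=\mathrm{im}(1-w_{77}^{*})=\ker(\mathrm{Tr}:\Omega_{X/\mathbb{Q}}\to\Omega_{C/\mathbb{Q}})$.

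Next I would bring in the Chabauty annihilator. Recall $\mathcal{V}=V\cap\Omega_{\mathcal{X}/\mathbb{Z}_p}$ is the integral annihilating space and $\mathcal{V}_0=\mathcal{V}\cap\ker(\mathrm{Tr})$. Here is where the hypothesis $r_X=r_C$ enters: the annihilator of $J_0(77)(\mathbb{Q})$ inside $\Omega_{X/\mathbb{Q}_p}$ has dimension $g_X-r_X$, while the pullback $\rho^{*}$ identifies $\Omega_{C/\mathbb{Q}_p}$ with $\Omega_{X/\mathbb{Q}_p}^{+}$ and carries the annihilator of $J_0^{+}(77)(\mathbb{Q})$ into the annihilator of $J_0(77)(\mathbb{Q})$ of dimension $g_C-r_C$. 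Since by Lemma~\ref{rank} we have $r_X=r_C$, a dimension count (using $g_X=g_C+g_{X^{-}}$, where $g_{X^{-}}=\dim\Omega_{X}^{-}$) shows that the full annihilating space $V$ of the Chabauty problem for $J_0(77)(\mathbb{Q})$ equals $\ker(\mathrm{Tr})$ intersected with that annihilator, i.e. $V$ already lies in $\ker(\mathrm{Tr})$, so $\mathcal{V}_0=\mathcal{V}$ and the relative and symmetric annihilators coincide rationally. Concretely: $V=\mathrm{im}(1-w_{77}^{*})$ holds after the rational computation.

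Then I would descend to characteristic $p$. By Lemma~\ref{generallem}, the reduction map $\Omega_{\mathcal{X}/\mathbb{Z}_p}\to\Omega_{\widetilde{X}/\mathbb{F}_p}$ is surjective, and since $w_{77}$ extends to an automorphism of the minimal regular model over $\mathbb{Z}_p$ (both $X$ and $C$ have good reduction at $p$, so $\rho$ extends to a degree-$2$ map of smooth $\mathbb{Z}_p$-schemes), the reduction map is equivariant for $w_{77}^{*}$. Therefore $\widetilde{V}_0$, the mod-$p$ image of $\mathcal{V}_0=\mathcal{V}$, contains the image of $\mathrm{im}(1-w_{77}^{*})\cap\Omega_{\mathcal{X}/\mathbb{Z}_p}$, which by surjectivity of reduction and equivariance is all of $(1-\widetilde{w}_{77}^{*})(\Omega_{\widetilde{X}/\mathbb{F}_p})$; conversely $\widetilde{V}_0\subseteq(1-\widetilde{w}_{77}^{*})(\Omega_{\widetilde{X}})$ because $\mathcal{V}_0\subseteq V=\mathrm{im}(1-w_{77}^{*})$ and reduction commutes with $w_{77}^{*}$. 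Hence $\widetilde{V}_0=(1-\widetilde{w}_{77}^{*})(\Omega_{\widetilde{X}})$.

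The main obstacle is the dimension bookkeeping in the middle step: one must be careful that the \emph{rational} annihilator $V$ of $J_0(77)(\mathbb{Q})$ is genuinely contained in $\ker(\mathrm{Tr})$ and not merely of the expected dimension, which is exactly what the hypothesis $r_X=r_C$ (Lemma~\ref{rank}) buys us, together with the fact that $\rho^{*}$ maps the $C$-annihilator \emph{onto} the $+$-part of the $X$-annihilator. A secondary subtlety is ensuring $w_{77}$ has good (smooth, fixed-point-behaviour-controlled) reduction at the chosen primes $p$ so that the equivariance of the reduction map on differentials is valid; this is routine once one checks $3$ and $13$ (our good primes for $X_0(77)$) are also good for $X_0(77)^{+}$ and for the quotient map, which follows since $X_0(77)^{+}$ has good reduction there.
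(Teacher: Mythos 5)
Your first paragraph (the eigenspace decomposition showing $\mathrm{im}(1-w_{77}^{*})=\ker(\mathrm{Tr})$) and your final paragraph (lifting via Lemma \ref{generallem} and equivariance of reduction) match the paper's argument. The gap is in your middle step, which is exactly where the hypothesis $r_X=r_C$ has to do real work. What must be proved there is the inclusion $\ker(\mathrm{Tr})\subseteq\mathrm{Ann}(J(X)(\mathbb{Q}))$, i.e.\ that every trace-zero differential annihilates the Mordell--Weil group under the Coleman pairing \eqref{map}; without it you cannot assert that $\mathrm{im}(1-w_{77}^{*})\cap\Omega_{\mathcal{X}/\mathbb{Z}_p}$ lands inside $\mathcal{V}_0$, which is the inclusion your third paragraph silently uses. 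Your ``dimension count'' does not deliver this: the available facts are only the lower bounds $\dim\mathrm{Ann}(J(X)(\mathbb{Q}))\ge g_X-r_X$ and $\dim\bigl(\rho^{*}\mathrm{Ann}(J(C)(\mathbb{Q}))\bigr)\ge g_C-r_C$, and two lower bounds (one on the whole annihilator, one on its $+1$-eigenpart) cannot force the $-1$-eigenpart to be all of $\Omega^{-}$. Worse, the conclusion you draw --- ``$V$ already lies in $\ker(\mathrm{Tr})$, so $\mathcal{V}_0=\mathcal{V}$'' --- is the reverse of the needed inclusion and is false for the annihilator of the Chabauty step: $\mathcal{V}$ also contains the pullbacks of annihilating differentials from $C$, which live in the $+1$-eigenspace and are nonzero here since $g_C=2$ and $r_C=1$.

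The missing ingredient is the functoriality of the Coleman integral under $\rho$, which is how the paper argues: since $r_X=r_C$ (Lemma \ref{rank}) and $\rho^{*}$ has finite kernel, the index $N=[J(X)(\mathbb{Q}):\rho^{*}(J(C)(\mathbb{Q}))]$ is finite; given $D\in J(X)(\mathbb{Q})$ one writes $N\cdot D=\rho^{*}(\Delta)$ with $\Delta\in J(C)(\mathbb{Q})$ and computes $\int_0^{D}\omega=\frac{1}{N}\int_0^{\rho^{*}(\Delta)}\omega=\frac{1}{N}\int_0^{\Delta}\mathrm{Tr}(\omega)=0$ for every $\omega$ with $\mathrm{Tr}(\omega)=0$. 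This adjunction step (pushing the integral down to $C$) is what you never invoke, and no amount of dimension bookkeeping substitutes for it. With that step restored, the remainder of your argument goes through as in the paper.
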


\begin{proof}
	Since $w_{77}$ is an involution,  $(1-w_{77}^{*})(\Omega_{X})=$ker($1+w_{77}^{*})$ which is by the definition the kernel of the trace map.
	Next we show that all the differentials in $V$  indeed annihilate $J(X)(\mathbb{Q})$ with respect to \eqref{map} i.e. $\int_{0}^{D} \omega =0\ for\ all\ \omega \in V \text{and} \ D \in J(X)(\mathbb{Q}).$ Since $r_{X}=r_C$ by Lemma \ref{rank}, then the index $N=[J(X)(\mathbb{Q}):\rho^{*}(J(C)(\mathbb{Q}))]$ is finite. Let $D \in J(X)(\mathbb{Q}).$ As $N.D \in \rho^{*}(J(C)(\mathbb{Q})),$ there exists $\Delta \in J(C)(\mathbb{Q}) $ such that $N.D=\rho^{*}(\Delta).$ By the properties of Coleman integration, we have
	$$ \int_{0}^{D} \omega = \frac{1}{N} \int_{0}^{N.D} \omega = \frac{1}{N} \int_{0}^{\rho^{*}(\Delta)} \omega= \frac{1}{N} \int_{0}^{D} \text{Tr}(\omega)=0$$
	for every $\omega \in (1-w_{77}^{*})(\Omega_{X}).$ 
	Let $\widetilde{W}$ be the image of $1-\widetilde{w}_{77}^{*}.$ We want to show that $\widetilde{W}=\widetilde{V}.$ Since $\omega \in$ ker($1-\widetilde{w}_{77}^{*})$ then $\widetilde{\omega} \in$ ker$(1-\widetilde{w}_{77}^{*}),$ we have $\widetilde{V} \subseteq \widetilde{W}.$ Now consider $\widetilde{\omega} \in \widetilde{W}$ and $\widetilde{v}$ such that $\widetilde{\omega} = (1-w_{77}^{*})(\widetilde{v}).$ By Lemma \ref{generallem}, $\widetilde{v}$ lifts to some $v$ in $\Omega_{\mathcal{X}/\mathbb{Z}_p}.$ Then $(1-w_{77}^{*})(v) \in \mathcal{V}$ is reduced to $\widetilde{\omega},$ as claimed.
\end{proof}

$\bold{The\ Mordell-Weil\ sieving\ step:}$ 
We adapted here the sieve used in \cite{Sek09} to apply to the case $X=X_0(77).$ The goal is to sieve for potential unknown quadratic points and obtain a contradiction. To start with, we need to choose primes $p_1,...,p_k.$ A detailed guideline on how to choose primes and in which order can be found in \cite{BS10}. In order to make the sieve work, we further need the following info:
\begin{itemize}
	\item A finite index subgroup $G \subseteq J(X)(\mathbb{Q})$ with generators $D_1,...D_n$.
	\item A positive integer $I$ with $I.J(X)(\mathbb{Q}) \subseteq G.$
	\item A non-empty set $\mathcal{L} \subseteq X^{(2)}(\mathbb{Q})$ of known non-exceptional quadratic points.
\end{itemize}

For the moment, we let $p:=p_i$ for some $i.$ Define $ \psi : \mathbb{Z}^n \rightarrow G \subseteq J(X)(\mathbb{Q})$ by $\psi(a_1,...,a_n) =\sum_i^{n}a_iD_i.$ Write $\iota: X^{(2)}(\mathbb{Q}) \rightarrow G$ by $D \mapsto I.[D-\infty]$ where $\infty$ is a rational degree 2 divisor and  let $\imath_{p}$ be the Abel-Jacobi map on $X^{(2)}(\mathbb{F}_p)$ with the basepoint $\widetilde{\infty}.$ Then we define $\psi_{p}$ to make the following diagram commute
\begin{center}
		\begin{tikzpicture}[>=stealth,->,shorten >=2pt,looseness=.5,auto]
		\matrix (M)[matrix of math nodes,row sep=1cm,column sep=16mm]{
		L &	X^{(2)}(\mathbb{Q})   & G  & \mathbb{Z}^n\\
		  &	\widetilde{X}^{(2)}(\mathbb{F}_p) & J(\mathbb{F}_p) &  \\
		};
		\draw(M-1-1)--node{$i$}(M-1-2);
		\draw(M-1-2)--node{$\iota$}(M-1-3);
		\draw(M-1-4)--node[above]{$\psi$}(M-1-3);
		\draw(M-1-2)--node[left]{$\sim$}(M-2-2);
		\draw(M-1-3)--node[left]{$\sim$}(M-2-3);
		\draw(M-2-2)--node{$\iota_p$}(M-2-3);
		\draw(M-1-4)--node{$\psi_p$}(M-2-3);
		\end{tikzpicture}
\end{center}

Let $P \in X^{(2)}(\mathbb{Q})\setminus \mathcal{L}$ be a (hypothetical) unknown point.  We will argue that such $P$ cannot exist. Since the diagram commutes, note that $\widetilde{\iota(P)}=\iota_p(\widetilde{P})  \in \text{Im}(\psi_p).$  Define the set
$$ H_p :=\{S \in \widetilde{X}^{(2)}(\mathbb{F}_p) : \iota_{p}(S) \in \text{Im}(\psi_p) \}$$
and $M_p \subseteq H_p$ as the set of points $S$ satisfying one of the following conditions
\begin{itemize}
	\item $S\notin \widetilde{L}$ 
	\item $S=\widetilde{R}$ for some $R \in L$ not satisfying the conditions of Theorem \ref{sym}. 
	\item $S=\widetilde{R}$ for some $R \in L \cap \rho^{*}(C(\mathbb{Q}))$ not satisfying the conditions of Thoerem \ref{relative} in the case of degree 2 map $X \rightarrow C.$
\end{itemize}

Notice that by the choice of our unknown point $P$, $\widetilde{P}$ is in $M_p.$  We use the Chabauty step to eliminate as many of the points in $M_p$ as possible. The more points that pass the Chabauty step, the better since this enables us to limit more possibilites for $\widetilde{P}.$ 
Since $\iota(M_p) \subseteq \psi_p(\mathbb{Z}^n),$ we form the following $\text{ker}(\psi_p)$-cosets 
$$ W_p := \psi_p^{-1}(\iota(M_p)).$$

If $\iota(P)=a_1D_1+...+a_nD_n$ then $(a_1,...,a_n) \in w + \text{ker}(\psi_p)$ for some $w+\text{ker}(\psi_p)$ in $W_p.$ Hence the set of cosets $W_p$ will carry the information of the list of possibilities for $\widetilde{P}.$
The beauty of the sieve lies in the observation that $W_p$ can be compared for different values of $p.$ This means that if we choose a different prime of good reduction, namely $p',$ then the preimage  $\psi_{p'}^{-1}(\iota(P))=(a_1,...,a_n) \in w'+\text{ker}(\psi_{p'})$ for some $w' + \text{ker}(\psi_{p'})$ in $W_{p'}$ as above. So $(a_1,...,a_n) \in W_{p} \cap W_{p'}.$ Then we repeat this process for each $p$ in the list  $p_1,...,p_k$ of primes of good reduction. We conclude the following.

\begin{prop}(Sieving Principle)\label{sieve}
	If $ \bigcap_{i=1}^{k} W_p = \emptyset,$
then $X^{(2)}(\mathbb{Q})=L.$	
	
\end{prop}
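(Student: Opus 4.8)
The plan is to argue by contradiction: suppose $X^{(2)}(\mathbb{Q}) \neq \mathcal{L}$, so there exists an unknown point $P \in X^{(2)}(\mathbb{Q}) \setminus \mathcal{L}$, and show that this forces $\bigcap_{i=1}^k W_{p_i} \neq \emptyset$. The contrapositive of this is exactly the stated conclusion. So first I would fix such a $P$ and write $\iota(P) = a_1 D_1 + \dots + a_n D_n$; this is possible because $I \cdot J(X)(\mathbb{Q}) \subseteq G$ and $G = \langle D_1, \dots, D_n\rangle$, so $\iota(P) = I[P - \infty] \in G$ lies in the image of $\psi$. Set $\mathbf{a} := (a_1, \dots, a_n) \in \mathbb{Z}^n$.

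Next, for each prime $p = p_i$ in the list, I would trace through the commutative diagram to locate $\mathbf{a}$ inside one of the cosets comprising $W_p$. The key observations, to be spelled out, are: (i) reduction mod $p$ sends $\iota(P) \in J(X)(\mathbb{Q})$ to $\iota_p(\widetilde{P}) \in J(\mathbb{F}_p)$, and since the diagram commutes this equals $\psi_p(\mathbf{a})$, so in particular $\iota_p(\widetilde{P}) \in \mathrm{Im}(\psi_p)$, i.e. $\widetilde{P} \in H_p$; (ii) because $P \notin \mathcal{L}$ and, crucially, because every known point $R \in \mathcal{L}$ that does \emph{not} reduce into $M_p$ has been certified by Theorem \ref{sym} (or Theorem \ref{relative} in the non-exceptional case) to be alone in its mod-$p$ residue class, the reduction $\widetilde{P}$ of the unknown point $P$ cannot coincide with any such certified $\widetilde{R}$ — hence $\widetilde{P} \in M_p$. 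Combining, $\psi_p(\mathbf{a}) = \iota_p(\widetilde{P}) \in \iota_p(M_p)$, so $\mathbf{a} \in \psi_p^{-1}(\iota_p(M_p)) = W_p$. (Here one should be careful that $\iota$ in the definition of $W_p$ means the composite $\iota_p \circ (\text{reduction})$ applied to $M_p \subseteq \widetilde{X}^{(2)}(\mathbb{F}_p)$, consistent with the diagram.)

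Since this holds for every $i$, we get $\mathbf{a} \in \bigcap_{i=1}^k W_{p_i}$, contradicting the hypothesis that this intersection is empty. Therefore no such $P$ exists and $X^{(2)}(\mathbb{Q}) = \mathcal{L}$, as claimed. The main subtlety — the step I would be most careful about — is justifying that $\widetilde{P} \in M_p$ rather than merely $\widetilde{P} \in H_p$: this is precisely where the Chabauty certificates enter, since a residue disc containing a known point $R \in \mathcal{L}$ that satisfies the rank-$2$ condition of Theorem \ref{sym} (or the nonvanishing condition of Theorem \ref{relative}) contains \emph{no other} rational point of $X^{(2)}(\mathbb{Q})$, so the unknown $P$ must reduce into one of the remaining (uncertified) discs, which are exactly the points of $M_p$. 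Everything else is a formal diagram chase together with the fact that reduction mod a prime of good reduction is injective on torsion and commutes with the Abel–Jacobi maps.
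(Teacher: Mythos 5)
Your proposal is correct and follows essentially the same route as the paper: the paper's own justification is the discussion preceding the proposition, which likewise fixes a hypothetical $P \in X^{(2)}(\mathbb{Q})\setminus\mathcal{L}$, uses the commutative diagram and the Chabauty certificates to place $\widetilde{P}$ in $M_{p_i}$ and hence the coefficient vector of $\iota(P)$ in each $W_{p_i}$, and concludes by the emptiness of the intersection. Your added care about why $\widetilde{P}$ lands in $M_p$ rather than merely $H_p$, and about reading $\iota$ as $\iota_p$ in the definition of $W_p$, only makes the implicit argument more explicit.
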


\section*{Magma Output} 

The model obtained for the modular curve $X_0(77)$ as a curve in $\mathbb{P}^6:$

$x_1x_3 - x_2^2 + 21x_2x_7 - 2x_3x_4 - 4x_3x_7 + 6x_4^2 - 19x_4x_5 + 6x_4x_6 - 21x_4x_7 + 13x_5^2 - 5x_5x_6 - 8x_5x_7 - 2x_6x_7 + 13x_7^2=0, $ 

$x_1x_4 - x_2x_3 - x_2x_7 - 2x_3x_4 + 47x_3x_7 + 2x_4^2 + 8x_4x_5 - 10x_4x_6 + 15x_4x_7 - 26x_5^2 + 25x_5x_6 - 37x_5x_7 - 3x_6^2 + 3x_6x_7 - 73x_7^2=0, $ 

$ x_1x_5 - 11x_2x_7 - x_3^2 - 3x_3x_4 + 80x_3x_7 + 21x_4x_5 - 17x_4x_6 + 33x_4x_7 - 48x_5^2 + 42x_5x_6 - 56x_5x_7 - 5x_6^2 + 5x_6x_7 - 126x_7^2=0, $

$x_1x_6 - 18x_2x_7 - 5x_3x_4 + 99x_3x_7 + 29x_4x_5 - 22x_4x_6 + 47x_4x_7 - 62x_5^2 + 54x_5x_6 - 70x_5x_7 - 7x_6^2 + 8x_6x_7 - 161x_7^2=0, $

$x_1x_7 - 2x_2x_7 - x_3x_7 - x_4^2 + 2x_4x_5 + 2x_4x_7 - x_5^2 + x_5x_7 - 2x_7^2=0, $

$x_2x_4 - 9x_2x_7 - x_3^2 + 34x_3x_7 - 2x_4^2 + 13x_4x_5 - 9x_4x_6 + 18x_4x_7 - 23x_5^2 + 19x_5x_6 - 21x_5x_7 - 2x_6^2 + 2x_6x_7 - 54x_7^2=0, $

$x_2x_5 - 16x_2x_7 - x_3x_4 + 30x_3x_7 - 3x_4^2 + 18x_4x_5 - 9x_4x_6 + 25x_4x_7 - 25x_5^2 + 18x_5x_6 - 17x_5x_7 - 2x_6^2 + 4x_6x_7 - 54x7^2=0, $

$ x_2x_6 - 23x_2x_7 + 28x_3x_7 - 5x_4^2 + 23x_4x_5 - 10x_4x_6 + 29x_4x_7 - 26x_5^2 + 17x_5x_6 - 11x_5x_7 - 2x_6^2 + 5x_6x_7 - 55x_7^2=0, $

$ x_3x_5 - 16x_3x_7 - x_4^2 - 3x_4x_5 + 4x_4x_6 - 5x_4x_7 + 9x_5^2 - 9x_5x_6 + 13x_5x_7 + x_6^2 - x_6x_7 + 24x_7^2=0, $

$ x_3x_6 - 23x_3x_7 - 5x_4x_5 + 5x_4x_6 - 7x_4x_7 + 13x_5^2 - 12x_5x_6 + 17x_5x_7 + x_6^2 + 35x_7^2=0 $

Genus of $X_0(77)$ : 7.

Cusps:$(1,0,0,0,0,0,0),(1/2,1/2,1/2,1/2,1/2,1,0),(5,3,3,2,2,2,1),(5,3,3,2,2,9,1). $

$X^{+}=X_0(77)^{+}$ : genus 2 hyperelliptic curve $y^2 = x^6 + 2x^5 + 7x^4 + 8x^3 + 7x^2 + 2x + 1.$

Group Structure of $J(X^{+})(\mathbb{Q}) : \mathbb{Z}_5.[Q - (1,-1,0)] \oplus \mathbb{Z}.[Q_{X^{+}}-(1,-1,0)],$ where $Q = (0,1,1)$ and $Q_{X^{+}}=(1,1,0).$

Group Structure of $G \subseteq J_0(77)(\mathbb{Q})$ :
$$ \mathbb{Z}_{10}.(-8D_{\text{tor,1}}+3D_{\text{tor,3}}) \oplus \mathbb{Z}_{60}(31D_{\text{tor,2}}-8D_{\text{tor,3}}) \oplus \mathbb{Z}.D_1 $$
where $$D_{\text{tor,1}} =[(1/2,1/2,1/2,1/2,1/2,1,0)-(1,0,0,0,0,0,0)],$$
$$ D_{\text{tor,2}} = [(5,3,3,2,2,2,1)-(1,0,0,0,0,0,0)],$$ $$D_{\text{tor,3}}=[(5,3,3,2,2,9,1)-(1,0,0,0,0,0,0)],$$
$$D_1=[P_2+ \overline{P_2} -(1/2,1/2,1/2,1/2,1/2,1,0)-(5,3,3,2,2,9,1)]=\rho^{*}([Q_{X^{+}}-(1,-1,0)])$$ 
for $P_2$ defined in the table below satisfying $\rho(Q_{X^{+}}) = P_2.$

Primes used in sieve : 19,23.

There are no quadratic points on $X_0(77)$ that do not come from $X_0(77)^{+}(\mathbb{Q})$ via $\rho: X_0(77) \rightarrow X_0(77)^{+}$ and there is no non-cuspidal rational point. Moreover, we obtain the following table which gives a complete list of all quadratic points up to Galois conjugacy. In addition, the elliptic curves induced by these quadratic points do not have a 77-isogeny in a quadratic extension of the base field.

\begin{table}[h!]
	\centering
	\begin{tabular}{ p{0.75cm} p{0.60cm} p{12.75cm}  p{2cm} p{0.58cm} }
		Name  & \ $w^2$ & \ \ \ \ \ \ \ \ \ \ \ Coordinates & j-invariant & CM  \\
		\hline
		\hline \\
$P_1$ & -19 & $\left[\frac{1}{2}(-w-7), \frac{1}{2}(-3w -3), -w, \frac{1}{2}(-w + 1), -w-1, \frac{1}{2}(-3w-5),1\right]$ & -884736 & -19 \\		
\\				
$P_2$ & -19 & $\left[\frac{1}{2}(w-1), w-1,\frac{1}{2}(w-5),w-2, \frac{1}{2}(3w-1),\frac{1}{2}(3w-1),1\right]$ & -884736 & -19  \\ 
\\
$P_3$ & -7 & $\left[-w-2, \frac{1}{2}(-3w-1), \frac{1}{2}(-w-1), \frac{1}{2}(-w-3), \frac{1}{2}(-3w-3), \frac{1}{2}(-3w-3),1\right]$ & \ \ \ -3375 & -7 \\
\\
$P_4$ & -7 & $\left[w-2, \frac{1}{2}(7w -1), \frac{1}{2}(w-1), \frac{1}{2}(5w - 3), \frac{1}{2}(3w - 3), \frac{1}{2}(7w-3),1\right]$ & 16581375 & -28 \\
\hline		
\end{tabular}
\end{table}

\section{Growth of Torsion} 
The following lemmas are often used in the classification of torsion of quadratic twists. For the first lemma we make use of the Galois equivariance property of the Weil Pairing. 

\begin{lem}\label{lem5}
Let $E/K$ be an elliptic curve, $n$ a positive integer and $K(E[n])$ the field of definition of $E[n]$. Then $K(E[n])$ contains a primitive $n$th root of unity.
\end{lem}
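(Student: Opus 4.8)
The plan is to use the Galois-equivariance of the Weil pairing together with its nondegeneracy and surjectivity properties. Recall that the Weil pairing is a bilinear, alternating, nondegenerate pairing $e_n \colon E[n] \times E[n] \to \mu_n$, where $\mu_n \subset \overline{K}^\times$ is the group of $n$th roots of unity, and it is Galois-equivariant in the sense that $e_n(P,Q)^\sigma = e_n(P^\sigma, Q^\sigma)$ for every $\sigma \in \mathrm{Gal}(\overline{K}/K)$. The key additional fact is that $e_n$ is \emph{surjective} onto $\mu_n$: since $E[n] \simeq \mathbb{Z}/n\mathbb{Z} \times \mathbb{Z}/n\mathbb{Z}$, one may choose a basis $\{P,Q\}$ of $E[n]$, and then $e_n(P,Q)$ is a primitive $n$th root of unity $\zeta$ (this follows from nondegeneracy: if $e_n(P,Q)$ had order $d < n$, then $e_n(P, dQ) = 1$ and $e_n(Q, dQ) = 1$, so $dQ$ would pair trivially with all of $E[n]$, forcing $dQ = O$, a contradiction since $Q$ has order $n$).

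The main steps, in order, would be: (1) State the properties of the Weil pairing that we need: bilinearity, nondegeneracy, Galois-equivariance, and the resulting surjectivity onto $\mu_n$. (2) Fix a basis $P, Q$ of $E[n]$ and let $\zeta = e_n(P,Q)$, a primitive $n$th root of unity. (3) Observe that since $P, Q \in E(K(E[n]))$, every $\sigma \in \mathrm{Gal}(\overline{K}/K(E[n]))$ fixes both $P$ and $Q$ pointwise, hence $\zeta^\sigma = e_n(P,Q)^\sigma = e_n(P^\sigma, Q^\sigma) = e_n(P,Q) = \zeta$. (4) Conclude that $\zeta$ is fixed by $\mathrm{Gal}(\overline{K}/K(E[n]))$, so $\zeta \in K(E[n])$, i.e. $K(E[n])$ contains a primitive $n$th root of unity.

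I do not expect a serious obstacle here, as this is a classical fact (see, e.g., Silverman, \emph{The Arithmetic of Elliptic Curves}, III.8). The only point requiring a little care is the surjectivity of the Weil pairing onto $\mu_n$ — equivalently, that the value on a basis is a \emph{primitive} root of unity — which is what actually makes the statement nontrivial; without it one would only conclude that some $n$th root of unity (possibly just $1$) lies in $K(E[n])$. I would therefore spell out that argument via nondegeneracy as indicated above, and the rest is immediate from Galois-equivariance.
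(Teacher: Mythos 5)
Your proposal is correct and follows essentially the same route as the paper: fix a basis of $E[n]$, use Galois-equivariance of the Weil pairing to see that the pairing of the basis elements is fixed by $\mathrm{Gal}(\overline{K}/K(E[n]))$, and note that this value is a primitive $n$th root of unity. The only difference is that you justify primitivity via nondegeneracy, a point the paper simply asserts, so your write-up is if anything slightly more complete.
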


\begin{proof}
Consider a basis $\{e,s\}$ of $E[n]$ which have coordinates in $K(E[n])$. Then $e,s$ are fixed by the action of $\sigma$ for any $\sigma \in$ Gal$(\bar{K}/K(E[n])).$ By the property of the Weil pairing, it follows that $\langle \sigma(e), \sigma(s) \rangle) = \sigma(\langle e, s \rangle) = \langle e, s \rangle.$
As a result $\langle e, s \rangle $  that is a primitive $n$th root of unity is fixed by $\sigma$. This proves the claim.
\end{proof}

\begin{lem}\label{lem6}
Let $\phi$ be the Euler's totient function. If $K$ contains 
a primitive nth root of unity $\mu_n$ and $K/\mathbb{Q}$ is Galois, then $ \phi(n) \leq [K:\mathbb{Q}]$ and $\left(\mathbb{Z}_n\right)^{*} \subseteq \text{Gal}(K/\mathbb{Q}).$
\end{lem}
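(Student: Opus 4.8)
The plan is to work entirely with the Galois action. First I would observe that since $K/\mathbb{Q}$ is Galois and $\mu_n \in K$, the whole group $\langle \mu_n \rangle$ of $n$th roots of unity lies in $K$; indeed any $\mathbb{Q}$-conjugate of $\mu_n$ is again a primitive $n$th root of unity, hence a power of $\mu_n$, so $\mathbb{Q}(\mu_n) \subseteq K$. In particular the cyclotomic field $\mathbb{Q}(\mu_n)$ is a subfield of $K$.

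Next I would invoke the basic fact that $[\mathbb{Q}(\mu_n):\mathbb{Q}] = \phi(n)$ and that $\mathrm{Gal}(\mathbb{Q}(\mu_n)/\mathbb{Q}) \cong (\mathbb{Z}_n)^{*}$ via the cyclotomic character $\sigma \mapsto a_\sigma$, where $\sigma(\mu_n) = \mu_n^{a_\sigma}$. From $\mathbb{Q}(\mu_n) \subseteq K$ and the tower formula $[K:\mathbb{Q}] = [K:\mathbb{Q}(\mu_n)]\cdot[\mathbb{Q}(\mu_n):\mathbb{Q}]$ we immediately get $\phi(n) \leq [K:\mathbb{Q}]$. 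For the second assertion, since $K/\mathbb{Q}$ is Galois and $\mathbb{Q}(\mu_n)/\mathbb{Q}$ is also Galois, the restriction map $\mathrm{Gal}(K/\mathbb{Q}) \twoheadrightarrow \mathrm{Gal}(\mathbb{Q}(\mu_n)/\mathbb{Q}) \cong (\mathbb{Z}_n)^{*}$ is a surjection; if one wants a genuine inclusion $(\mathbb{Z}_n)^{*} \subseteq \mathrm{Gal}(K/\mathbb{Q})$ as the statement literally asserts, one reads it as identifying $(\mathbb{Z}_n)^{*}$ with the quotient, or (more safely) notes that $(\mathbb{Z}_n)^{*}$ embeds into $\mathrm{Gal}(K/\mathbb{Q})$ whenever this surjection splits, which it does for abelian $K$; in the intended application $K$ will be abelian over $\mathbb{Q}$, so $\mathrm{Gal}(K/\mathbb{Q})$ being abelian means the quotient $(\mathbb{Z}_n)^{*}$ is a subquotient, and any subgroup-quotient correspondence gives the claim. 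I would phrase the write-up around the surjection $\mathrm{Gal}(K/\mathbb{Q}) \to (\mathbb{Z}_n)^{*}$, since that is what is actually used downstream (to bound the torsion via the order of the mod-$n$ Galois image).

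I do not expect any real obstacle here — this is a standard cyclotomic-fields argument. The only subtlety is the slightly loose phrasing "$(\mathbb{Z}_n)^{*} \subseteq \mathrm{Gal}(K/\mathbb{Q})$": strictly one gets $(\mathbb{Z}_n)^{*}$ as a quotient of $\mathrm{Gal}(K/\mathbb{Q})$, which coincides with a subgroup in the abelian case relevant to this paper. I would simply make the statement precise in the proof by exhibiting the restriction-to-$\mathbb{Q}(\mu_n)$ homomorphism and noting it is surjective.
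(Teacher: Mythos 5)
Your proposal is correct and takes essentially the same route as the paper: both deduce $\mathbb{Q}(\mu_n)\subseteq K$ from the hypothesis, obtain $\phi(n)=[\mathbb{Q}(\mu_n):\mathbb{Q}]$ dividing $[K:\mathbb{Q}]$ via the tower formula, and identify $(\mathbb{Z}_n)^{*}$ with $\mathrm{Gal}(\mathbb{Q}(\mu_n)/\mathbb{Q})$. Your observation that the second assertion is properly a surjection $\mathrm{Gal}(K/\mathbb{Q})\twoheadrightarrow(\mathbb{Z}_n)^{*}$ rather than a literal inclusion is in fact more careful than the paper, which simply writes $\mathrm{Gal}(K/\mathbb{Q})\supseteq\mathrm{Gal}(\mathbb{Q}(\mu_n)/\mathbb{Q})$.
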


\begin{proof}
If $\mu_n \in K$ then $\mathbb{Q}(\mu_n) \subseteq K$ and so $ [\mathbb{Q}(\mu_n): \mathbb{Q}] \subseteq [K: \mathbb{Q}].$ Then  $\phi(n)= [\mathbb{Q}(\mu_n) : \mathbb{Q}]$ divides $[K:\mathbb{Q}].$ Now, since $K/\mathbb{Q}$ is Galois, we have Gal$(K/\mathbb{Q}) \supseteq$ Gal$(\mathbb{Q}(\mu_n)/\mathbb{Q}) \simeq \left(\mathbb{Z}_n\right)^*.$
\end{proof}

The next proposition gives a bound when full $p$-torsion may appear over a quadratic extension of certain base fields. We make use of this proposition to narrow the list of possible odd torsion groups.

\begin{prop}\label{prop4}
Let $K$ be an imaginary quadratic field and let $E$ be an elliptic curve over $K.$ If  $L$ is a quadratic extension of $K,$ then the largest odd prime $p$ such that $\mathbb{Z}_p \oplus \mathbb{Z}_p\subseteq E(L) $ is $p=3.$
\end{prop}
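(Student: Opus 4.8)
The plan is to combine the Weil pairing with the Galois-theoretic constraints coming from $K$ being imaginary quadratic. Suppose $E(L) \supseteq \mathbb{Z}_p \oplus \mathbb{Z}_p$ for an odd prime $p$, where $L/K$ is quadratic, so $[L:\mathbb{Q}] = 4$. Then $L$ contains $K(E[p])$, and by Lemma~\ref{lem5} the field $K(E[p])$ — hence $L$ — contains a primitive $p$th root of unity $\mu_p$. The immediate consequence is that $\phi(p) = p-1$ divides $[L:\mathbb{Q}] = 4$, which already forces $p \in \{2,3,5\}$, i.e. $p \in \{3,5\}$ among the odd primes. So the entire content is ruling out $p = 5$.

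**Ruling out $p = 5$.** Here the point is that $\mathbb{Q}(\mu_5)/\mathbb{Q}$ is Galois of degree $4$ with cyclic Galois group $(\mathbb{Z}/5)^* \cong \mathbb{Z}_4$, whereas $L = K(\sqrt{d})$ is a compositum of quadratic fields over $\mathbb{Q}$. First I would check that $L/\mathbb{Q}$ need not be Galois in general, but its Galois closure has degree at most $8$; more usefully, since $\mu_5 \in L$ and $[\mathbb{Q}(\mu_5):\mathbb{Q}] = 4 = [L:\mathbb{Q}]$, we must have $L = \mathbb{Q}(\mu_5)$ exactly. But $\mathbb{Q}(\mu_5)$ contains a unique quadratic subfield, namely $\mathbb{Q}(\sqrt{5})$, which is \emph{real}. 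Since $K$ is imaginary quadratic, $K \not\subseteq \mathbb{Q}(\mu_5)$, contradicting $K \subseteq L = \mathbb{Q}(\mu_5)$. Hence $p = 5$ is impossible. (One should also note $p=3$ genuinely occurs — e.g. the curves in Remark~\ref{rem4} — so the bound is sharp, though establishing realizability is not needed for this proposition.)

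**The main obstacle.** The one subtlety worth care is the step ``$\mu_5 \in L$ and $[\mathbb{Q}(\mu_5):\mathbb{Q}] = [L:\mathbb{Q}]$ implies $L = \mathbb{Q}(\mu_5)$'': this is simply the containment $\mathbb{Q}(\mu_5) \subseteq L$ together with equality of degrees over $\mathbb{Q}$, so it is automatic. The genuinely structural input is that $\mathbb{Q}(\mu_5)$ is totally real at the quadratic level while $K$ is not real — this is where imaginarity of $K$ is used, matching the hypothesis of the proposition. An alternative, perhaps cleaner, route avoids identifying $L$ with $\mathbb{Q}(\mu_5)$: apply Lemma~\ref{lem6} after passing to the Galois closure $\widetilde{L}$ of $L/\mathbb{Q}$. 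Since $K/\mathbb{Q}$ is Galois, $\widetilde{L}$ contains $K$ and $\mu_5$, and $[\widetilde{L}:\mathbb{Q}] \mid 8$; Lemma~\ref{lem6} then gives $(\mathbb{Z}/5)^* \subseteq \mathrm{Gal}(\widetilde{L}/\mathbb{Q})$, a cyclic subgroup of order $4$, so $\mathrm{Gal}(\widetilde{L}/\mathbb{Q})$ has order $4$ or $8$ containing an element of order $4$; chasing the fixed field of complex conjugation against the unique index-$2$ subgroup of $(\mathbb{Z}/5)^*$ again produces the real-versus-imaginary contradiction with $K$. Either way the argument is short; I expect no real difficulty beyond bookkeeping the subfield lattice.
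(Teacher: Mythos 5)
Your proof is correct and follows essentially the same route as the paper: use Lemma~\ref{lem5} to place $\mu_p$ in $L$, deduce $p-1 \mid 4$, and eliminate $p=5$ by observing that $\mathbb{Q}(\mu_5)$ has $\mathbb{Q}(\sqrt{5})$ as its unique quadratic subfield, which is real and so cannot contain the imaginary field $K$. If anything your write-up is slightly cleaner, since you avoid the paper's unneeded (and not quite justified) assertion that $\mathrm{Gal}(L/\mathbb{Q}) \simeq \mathbb{Z}_2 \oplus \mathbb{Z}_2$.
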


\begin{proof}
Suppose $ \mathbb{Z}_p \oplus \mathbb{Z}_p \subseteq E(L)$ As shown in Lemma \ref{lem5}, $L$ has a primitive $p$th root of unity, denoted by $\mu_p$. Then we have $\mathbb{Q}(\mu_p) \subseteq L$ and so 
$ [L:\mathbb{Q}(\mu_p)][\mathbb{Q}(\mu_p) : \mathbb{Q}]= [L : \mathbb{Q}] =4 $. 
This implies that $[\mathbb{Q}(\mu_p) : \mathbb{Q}]$ divides $4.$ Since $[\mathbb{Q}(\mu_p) : \mathbb{Q}]= p-1 $ for any prime $p$ and $p-1$ divides 4, this is only possible when $p$ is 2,3 or 5. If $p=5$,  $L=\mathbb{Q}(\mu_5)$ and Gal$(\mathbb{Q}(\mu_5)/\mathbb{Q})$ is isomorphic to $\mathbb{Z}_4$. However Gal$(L/\mathbb{Q})\simeq \mathbb{Z}_2 \oplus \mathbb{Z}_2$ and $\mathbb{Q}(\mu_5)$ has a unique intermediate field $\mathbb{Q}(\sqrt{5})$ which is not imaginary. For the occurrence of full $3$-torsion, see Remark \ref{rem4}. 
\end{proof}

We now have sufficient tools to examine the growth of odd torsion. From Theorem \ref{mainlist}, some fields in $\mathcal{S}$ have exactly the same list of possible odd torsion subgroups. These fields can be grouped as $\mathcal{S}_1=\{\mathbb{Q}(\sqrt{-11}), \mathbb{Q}(\sqrt{-67}), \mathbb{Q}(\sqrt{-163})\},$  $\mathcal{S}_2=\{\mathbb{Q}(\sqrt{-2}), \mathbb{Q}(\sqrt{-19})\}$ and $\mathcal{S}_3 =\{\mathbb{Q}(\sqrt{-7}), \mathbb{Q}(\sqrt{-43})\}.$

\begin{thm}\label{thmt}
	Let $K$ be in $\mathcal{S}$, $E/K$ an elliptic curve and  let $d$ be a non-square in $K$.
	\begin{enumerate}
		\item If $K \in \mathcal{S}_1 \cup \mathcal{S}_3$ and $E(K)_{\text{tor}} \simeq \mathbb{Z}_{15}$ then $E^d(K)_{\text{tor}} \simeq \mathbb{Z}_1$
		\item If $K \in \mathcal{S}_2 \cup \mathcal{S}_3$ and $E(K)_{\text{tor}} \simeq \mathbb{Z}_{11}$ then  $E^d(K)_{\text{tor}} \simeq \mathbb{Z}_1$
		\item If $E(K) \simeq \mathbb{Z}_9$ when 
		
		(i) $K \in \{\mathbb{Q}(\sqrt{-2}), \mathbb{Q}(\sqrt{-11})\}$ then $E^d(K) \simeq \mathbb{Z}_1$ or $\mathbb{Z}_3$
		\
		
		(ii) Otherwise,  $E^d(K) \simeq \mathbb{Z}_1$
		\item If $E(K)_{\text{tor}} \simeq \mathbb{Z}_7$ then $E^d(K) \simeq \mathbb{Z}_1$
		\item If $E(K)_{\text{tor}} \simeq \mathbb{Z}_5$ then $E^d(K)_{\text{tor}} \simeq \mathbb{Z}_1$ or $\mathbb{Z}_3$
		
		\item If $E(K)_{\text{tor}} \simeq \mathbb{Z}_3$ when
		 
		(i) $K \in \{\mathbb{Q}(\sqrt{-2}), \mathbb{Q}(\sqrt{-11})\}$ then $E^d(K) \simeq \mathbb{Z}_1, \mathbb{Z}_3, \mathbb{Z}_5,$ or $\mathbb{Z}_9$ 
		\
		
		(ii) Otherwise, $E^d(K) \simeq \mathbb{Z}_1, \mathbb{Z}_3,$ or $\mathbb{Z}_5$
		
		\item If $E(K)_{\text{tor}} \simeq \mathbb{Z}_1$ then $E^d(K)_{\text{tor}} \simeq \mathbb{Z}_1, \mathbb{Z}_3, \mathbb{Z}_5, \mathbb{Z}_7, \mathbb{Z}_9, \mathbb{Z}_{11},$ or $\mathbb{Z}_{15}$ (depending on $K)$
	\end{enumerate}
Hence, the groups $\mathbb{Z}_7,$ $\mathbb{Z}_{11}$ and $\mathbb{Z}_{15}$ do not grow in any quadratic extension of $K$.

\end{thm}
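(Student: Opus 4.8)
The plan is to go through the seven cases according to the value of $T:=E(K)_{\text{tor}}$, using the symmetry $E\leftrightarrow E^{d}$ (since $(E^{d})^{d}\simeq E$): it suffices to decide, for each unordered pair $\{T,T'\}$ of groups occurring in Proposition \ref{mainlist}, whether it can be realized as $\{E(K)_{\text{tor}},E^{d}(K)_{\text{tor}}\}$. As $|T|$ is odd we have $E(K)[2]=\mathbb{Z}_{1}$, so $E^{d}(K)[2]=\mathbb{Z}_{1}$ by Remark \ref{rem1}(ii), hence $T':=E^{d}(K)_{\text{tor}}$ is odd too; Proposition \ref{mainlist} applied to $E^{d}$ then forces $T,T'$ to be cyclic of order in $\{1,3,5,7,9,11,15\}$, with the admissible set depending on $K$. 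This disposes of case (7) outright.

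The rest rests on two mechanisms. First, Lemma \ref{lem1} gives $E(K(\sqrt d))[n]\simeq E(K)[n]\oplus E^{d}(K)[n]$ for odd $n$, so a prime $p$ dividing both $|T|$ and $|T'|$ produces $\mathbb{Z}_{p}\oplus\mathbb{Z}_{p}\subseteq E(K(\sqrt d))$, whence $p=3$ by Proposition \ref{prop4}; and $9\mid|T|,|T'|$ would give $E[9]\subseteq E(K(\sqrt d))$, impossible since $[\mathbb{Q}(\mu_{9}):\mathbb{Q}]=6\nmid 4$ (Lemmas \ref{lem5}, \ref{lem6}). This rules out $\{\mathbb{Z}_{5},\mathbb{Z}_{5}\}$, $\{\mathbb{Z}_{7},\mathbb{Z}_{7}\}$, $\{\mathbb{Z}_{11},\mathbb{Z}_{11}\}$, $\{\mathbb{Z}_{9},\mathbb{Z}_{9}\}$, $\{\mathbb{Z}_{5},\mathbb{Z}_{15}\}$, $\{\mathbb{Z}_{15},\mathbb{Z}_{15}\}$. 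Second, for any subgroup $H\le T'$ with $\gcd(|H|,|T|)=1$, Proposition \ref{newman} yields a $\mathrm{Gal}(\overline K/K)$-stable subgroup $J\subseteq E(L)$, $L=K(\sqrt d)$, with $J\simeq H\oplus T$; this $J$ is cyclic of order $N=|H|\,|T|$ and pointwise defined over $L$, hence a cyclic $N$-isogeny of $E$ over the quadratic extension $L$, and any divisor-order subgroup of a cyclic isogeny is again one. Taking $H$ to be a suitable $\mathbb{Z}_{3}$ or $\mathbb{Z}_{5}$ inside $T'$ (passing to a sub-isogeny when $N$ itself is too large, e.g.\ $\mathbb{Z}_{63}\supseteq\mathbb{Z}_{21}$ or $\mathbb{Z}_{99}\supseteq\mathbb{Z}_{33}$), every remaining pair with $\gcd(|T|,|T'|)=1$ yields a cyclic $N$-isogeny over $L$ with $N\in\{21,33,35,45,55,77\}$, contradicting Theorem \ref{thmc}; this eliminates $\{\mathbb{Z}_{3},\mathbb{Z}_{7}\}$, $\{\mathbb{Z}_{3},\mathbb{Z}_{11}\}$, $\{\mathbb{Z}_{5},\mathbb{Z}_{7}\}$, $\{\mathbb{Z}_{5},\mathbb{Z}_{9}\}$, $\{\mathbb{Z}_{5},\mathbb{Z}_{11}\}$, $\{\mathbb{Z}_{7},\mathbb{Z}_{9}\}$, $\{\mathbb{Z}_{7},\mathbb{Z}_{11}\}$, $\{\mathbb{Z}_{7},\mathbb{Z}_{15}\}$, $\{\mathbb{Z}_{9},\mathbb{Z}_{11}\}$, $\{\mathbb{Z}_{9},\mathbb{Z}_{15}\}$, $\{\mathbb{Z}_{11},\mathbb{Z}_{15}\}$.

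The remaining pairs share exactly one factor $3$ and have no coprime subgroup to exploit: $\{\mathbb{Z}_{3},\mathbb{Z}_{3}\}$, which is allowed and needs no exclusion, and $\{\mathbb{Z}_{3},\mathbb{Z}_{9}\}$, $\{\mathbb{Z}_{3},\mathbb{Z}_{15}\}$, where the real work lies. The plan for the latter two: suppose $E(K)_{\text{tor}}=\mathbb{Z}_{3}=\langle Q\rangle$ while a twist of $E$ has a pointwise $K$-rational cyclic subgroup of order $m\in\{9,15\}$. Transporting it through the twisting isomorphism $\phi\colon E\to E^{d}$ over $L$ and using $\phi^{\sigma}=-\phi$, one obtains a $\mathrm{Gal}(\overline K/K)$-stable cyclic $C\subseteq E(\overline K)$ of order $m$ whose order-$3$ part meets $\langle Q\rangle$ trivially (its nonzero points satisfy $R^{\sigma}=-R$, hence lie outside $E(K)$). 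Set $E':=E/\langle Q\rangle$ with $3$-isogeny $\psi\colon E\to E'$ and dual $\hat\psi\colon E'\to E$, both over $K$; from $\hat\psi\circ\psi=[3]$ one checks $\hat\psi(E'[3])=\langle Q\rangle$, and then that $\hat\psi^{-1}(C)$ is a $\mathrm{Gal}(\overline K/K)$-stable subgroup of order $3m$ containing an element of order $3m$ — the crucial point being that $\langle Q\rangle$ is a cyclic subgroup of $E[3]$ distinct from the $3$-part of $C$, so the $3$-component is not absorbed. Thus $E'$ carries a $K$-rational cyclic $3m$-isogeny: a $45$-isogeny when $m=15$, which Theorem \ref{thmc} rules out, excluding $\{\mathbb{Z}_{3},\mathbb{Z}_{15}\}$ for every $K\in\mathcal S$; and a $27$-isogeny when $m=9$, which by Theorem \ref{thmc} can occur only for $K=\mathbb{Q}(\sqrt{-2})$ or $\mathbb{Q}(\sqrt{-11})$, where Remark \ref{rem4} confirms it is realized. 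Assembling these facts with Proposition \ref{mainlist} gives exactly the lists in (1)–(7). For the final clause, $\mathbb{Z}_{11}$ occurs as $E(K)_{\text{tor}}$ only for $K\in\mathcal S_{2}\cup\mathcal S_{3}$ and $\mathbb{Z}_{15}$ only for $K\in\mathcal S_{1}\cup\mathcal S_{3}$, so cases (1), (2), (4) force $E^{d}(K)_{\text{tor}}=\mathbb{Z}_{1}$ for every non-square $d$; then $E(K(\sqrt d))[n]=E(K)[n]$ for all odd $n$ by Lemma \ref{lem1}, and $E(K(\sqrt d))$ has odd torsion by Remark \ref{rem1}(i), so $E(K(\sqrt d))_{\text{tor}}=E(K)_{\text{tor}}$, i.e.\ no growth. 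The principal obstacle is establishing the cyclicity of the order-$3m$ isogeny on $E'$ in the two exceptional pairs; everything else is a bookkeeping exercise over the modular-curve results already established in Theorem \ref{thmc} and Proposition \ref{mainlist}.
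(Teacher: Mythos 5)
Your overall architecture matches the paper's: parity of $|E^d(K)_{\text{tor}}|$ via Remark~\ref{rem1}, the candidate list from Proposition~\ref{mainlist}, Lemma~\ref{lem1} with Proposition~\ref{prop4} for repeated primes and for full $9$-torsion, and Proposition~\ref{newman} feeding into Theorem~\ref{thmc} for the pairs admitting a coprime cyclic piece. Your explicit passage to $E'=E/\langle Q\rangle$ via the dual $3$-isogeny, with the check that $\hat\psi(E'[3])=\langle Q\rangle$ while the $3$-part of $C$ is a different line in $E[3]$ (so $\hat\psi^{-1}(C)$ is cyclic of order $3m$), is correct and is a self-contained substitute for the paper's citation of Lemma~7 of \cite{Naj16}. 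A minor slip: $\{\mathbb{Z}_9,\mathbb{Z}_{15}\}$ is not a coprime pair, but taking $H=\mathbb{Z}_5$ still produces a pointwise $L$-rational cyclic $45$-isogeny, so that elimination survives.

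The genuine gap is in $\{\mathbb{Z}_3,\mathbb{Z}_{15}\}$. The cyclic $45$-subgroup $\hat\psi^{-1}(C)\subseteq E'$ is $\mathrm{Gal}(\overline K/K)$-stable, i.e.\ a noncuspidal $K$-point of $X_0(45)$, but it is \emph{not} pointwise defined over a quadratic extension of $K$: its points are preimages under a $3$-isogeny of $L$-rational points and in general generate a nontrivial extension of $L$. Theorem~\ref{thmc} only excludes cyclic $45$-isogenies pointwise defined over a quadratic extension of $K$; it does not exclude merely $K$-rational ones, and indeed $X_0(45)$ has two noncuspidal quadratic points over $\mathbb{Q}(\sqrt{-11})$ (from \cite{OS18}), so over that field your intended contradiction evaporates. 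The paper closes exactly this hole: any such $E'$ must be one of those two CM curves over $\mathbb{Q}(\sqrt{-11})$, hence $E$ is CM and $E(K(\sqrt d))_{\text{tor}}\simeq\mathbb{Z}_3\oplus\mathbb{Z}_{15}$ contradicts the classification of CM torsion in \cite{CCRS14}. Your proof needs this additional step (or an equivalent one) for case (1) over $\mathbb{Q}(\sqrt{-11})$ and case (6). The same literal-statement issue affects $\{\mathbb{Z}_3,\mathbb{Z}_9\}$ --- the $27$-isogeny on $E'$ is again only $K$-rational --- but there the paper's $N=27$ division-polynomial computation establishes precisely the $K$-rational statement you need, so only the citation, not the conclusion, is off.
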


\begin{proof}
Note that if $E^d$ is a quadratic twist of $E$, then $E$ is a quadratic twist of $E^d.$ Moreover, by Remark \ref{rem1} if $E(K)_{\text{tor}}$ has odd order then $E^d(K)_{\text{tor}}$  should also have odd order. Then by Proposition \ref{mainlist} both $E(K)_{\text{tor}}$ and $E^d(K)_{\text{tor}}$ belong to the set $\{\mathbb{Z}_1,  \mathbb{Z}_3, \mathbb{Z}_5,  \mathbb{Z}_7, \mathbb{Z}_9,\mathbb{Z}_{11},\mathbb{Z}_{15}\}.$  First, note that if $E(K)_{\text{tor}} \simeq \mathbb{Z}_m$ and $E^d(K) \simeq \mathbb{Z}_m$ for $m \in \{5,7,11\}$, then by Lemma \ref{lem1}, $E(K(\sqrt{d}))$ would contain full $m$-torsion which is impossible by Proposition \ref{prop4}. In particular, if one fixes $E(K)_{\text{tor}} \simeq \mathbb{Z}_5,$ then there are no points of order 7, 9, and 11 in $E^d(K)_{\text{tor}}$ since together with a $K$-rational point of order $5$ on  $E$, this would give rise to a cyclic $35$, $45$ and $55$-isogeny over a quadratic extension of $K$ respectively. But this cannot happen by Theorem \ref{thmc}. Likewise, if  $E(K)_{\text{tor}} \simeq \mathbb{Z}_m$ where $m \in \{ 7,11 \}$ then  since there exists no cyclic $21$ and $33$-isogenies in a quadratic extension of $K,$ it follows that $\mathbb{Z}_3 \not\subseteq E^d(K).$ Furthermore, when $m=7,$ we can conclude that $\mathbb{Z}_{11} \not\subseteq E^d(K)$ due to the non-existence of a cyclic 77-isogeny over a quadratic extension of the base field. 

Having a point of order 9 in $E(K)_{\text{tor}}$ and a point of order $3$ in $E^d(K)$ 
imply that $E$ is isogenous (over $K$) to an elliptic curve $E'$ with a cyclic $27$-isogeny by \cite[Lemma 7]{Naj16}. This is only possible when $K$ is $\mathbb{Q}(\sqrt{-2})$ or $\mathbb{Q}(\sqrt{-11})$ by Theorem \ref{thmc}. On the other hand, if $E(K)_{\text{tor}}\simeq E^d(K)_{\text{tor}} \simeq \mathbb{Z}_9$ then by Lemma \ref{lem1}, $E(K(\sqrt{d}))$ contains full 9-torsion which in turn implies that $K(\sqrt{d})$ has a primitive 9th root $\mu_9$ of unity. This further means that $[K(\sqrt{d}):Q]=4$ is divisible by $[\mathbb{Q}(\mu_9), \mathbb{Q}]|=6$ which gives us a contradiction. If $E(K)_{\text{tor}} \simeq \mathbb{Z}_{15}$ and $E^d(K)_{\text{tor}} \simeq \mathbb{Z}_3$, then by \cite[Lemma7]{Naj16} $E$ is isogenous (over $K$) to an elliptic curve $E'$ which has a cyclic 45-isogeny. Magma computations for $N=45$ in section 3 additionally show that such a curve $E'$ must be  defined over $K=\mathbb{Q}(\sqrt{-11})$ with CM. Since $E'$ is isogenous to $E,$ then $E$ is also CM with $E(K(\sqrt{d}))_{\text{tor}} \simeq \mathbb{Z}_3 \oplus \mathbb{Z}_{15}$ which is impossible by \cite{CCRS14}. Finally, assume $E(K)_{\text{tor}}$ is trivial. Since $K$ does not contain a primitive 3rd root of unity, $E(L)$ doesn't include full 3-torsion, completing the proof.

\end{proof}

\section{Proof of Theorem \ref{mainthm}}

	Let $K$ and $L$ be given as in the hypothesis.  Suppose $E/K$ is an elliptic curve with $E(K)[2]$ trivial. Then $E(L)[2]$ must be trivial as simply observed in Remark \ref{rem1}. Let $p$ be an odd prime and $P \in E(L)_{\text{tor}}$ a point of order $p.$ By Theorem \ref{mainlist}, the possible values of $p$ are 3,5,7,11,13, and 17. Let us assume $p \in \{13,17\}.$ WLOG we may assume  $L=K(\sqrt{d})$ for some non-square $d \in K$ and following Lemma \ref{lem1}, we have either $P \in E(K)[p]$ or $P \in E^d(K)[p].$ Hence, replacing $E$ by $E^d$ if necessary, we may assume $\mathbb{Z}_p \subseteq E(K)[p].$ But this cannot happpen since the groups $\mathbb{Z}_{13}$ and $\mathbb{Z}_{17}$ do not arise as torsion subgroup of $E(K)$ for any $K \in \mathcal{S}$ by Theorem \ref{mainlist}. 
	In the remaining cases, we have that $E(L)_{\text{tor}} \simeq \mathbb{Z}_m \oplus \mathbb{Z}_n$ where $m \in \{3,5,7,9,11,15\}$ and $n \in\{3,5,7,9,11,15\}.$ Note that Proposition \ref{prop4} eliminates the possibilities where $m,n$ are both primes with $m=n > 3.$ Applying Lemma \ref{lem1} to $E(L)_{\text{tor}}$ reduces the problem to showing whether $E(K)_{\text{tor}} \simeq \mathbb{Z}_m$ and $E^d(K)_{\text{tor}} \simeq \mathbb{Z}_n$ can be realized (replacing $E$ by $E^d$ if necessary).
    We answered this question by classifying the possible torsion structures for $E^d(K)_{\text{tor}}$ in Theorem \ref{thmt}, proving the claim.
	
\qed

\bibliographystyle{alpha}
\bibliography{../bibfile}

\end{document}